\newtheorem{proposition}{Proposition}
\newtheorem{problem}{Problem}
\newcommand{\R}{{\mathbb R}}
\newcommand{\ignore}[1]{}
\newcommand{\cKa}{{\mathcal K}_{a}}
\newcommand{\cKb}{{\mathcal K}_{b}}
\begin{document}

\title{Distributional barycenter problem through data-driven flows}

\author[a]{Esteban G. Tabak}
\affil[a]{Courant Institute of Mathematical Sciences, 251 Mercer Street, New York, NY 10012, USA}
            
\author[b]{Giulio Trigila}
\affil[b]{Baruch College, CUNY, 55 Lexington avenue, New York, NY 10010, USA}
\author[a]{Wenjun Zhao}

\date{}  
\maketitle

\begin{abstract}
A new method is proposed for the solution of the data-driven optimal transport barycenter problem and of the more general distributional barycenter problem that the article introduces. The method improves on previous approaches based  on adversarial games, by slaving the discriminator to the generator, minimizing the need for parameterizations and by allowing the adoption of general cost functions. It is applied to numerical examples, which include analyzing the MNIST data set with a new cost function that penalizes non-isometric maps.      
\end{abstract}

\section{Introduction}  
Optimal transport and the related Wasserstein barycenter problem have undergone rapid development during the last ten years, with a particular focus on applications to the analysis of data and machine learning \cite{kolouri2017optimal}, ranging from gene expression \cite{schiebinger2019optimal} to economics  \cite{galichon2018optimal}. Procedures based on optimal transport have been used for density and conditional density estimation \cite{TT1, tabak2020conditional}, data augmentation  \cite{pavon2018data}, image classification \cite{kolouri2016continuous, wang2010optimal, yang2018complex}, computer vision \cite{solomon2015convolutional, wang2013linear, AHT, rabin2014adaptive}, factor discovery \cite{yang2019conditional} and data imputation \cite{TT3}.

Given two probability 
distributions $\rho$ and $\mu$, the optimal transport problem (\cite{monge1781memoire,kantorovich1948v,santambrogio2015optimal}) seeks the map $T$ with minimal cost among those satisfying the push forward condition $\mu=T_{\#}\rho$, with a cost function determined by the application at hand. In the barycenter problem, a conditional distribution $\rho(x|z)$ is mapped to a single, unknown  distribution $\mu(y)$, which minimizes the sum over $z$ of the transportation cost from $\rho$ to $\mu$.

Some recent methodologies for the numerical solution of the data-based barycenter problem apply only to a canonical cost function, the squared Euclidean distance between points. The advantage of restricting attention to this or similar cost functions is that one can fully characterize the solution in terms of a convex potential, thus bypassing the need to actually perform a total cost minimization. However, a number of applications call for more general, field-specific cost functions. Consider for illustration the following instances:
\begin{enumerate}

\item The distributions  $\rho(x|z)$ underlying real world data are often defined on high dimensional spaces, yet they concentrate near a manifold $\mathcal{M}$ of  dimension $m$ smaller than the dimension $d$ of the ambient space.  Exploiting this geometric property of the data reduces the complexity of the map, which should be a function of $m$ rather than $d$. 
The geometry underlying the data encodes the nature of a system, so models consistent with it have a more meaningful data correlation structure.
One way to carry out this program is to use a cost function that penalizes maps $T$ moving data outside the manifold $\mathcal{M}$.  
Even in low dimensions, data often concentrates on or near a non-Euclidean sub-manifold, such as the Earth's surface for climate-related data. 

\item The introduction of a new cost function is often dictated purely by properties that one wishes to impose on the barycenter. We introduce in section \ref{sec:NumEx}, in the context of an application to the MNIST data set, a cost function favoring isometric maps. This results in a smoother, more interpretable barycenter of hand-written digits, modeled as distributions in pixel space.  
\end{enumerate}

This last example goes beyond the realm of optimal transport, as the cost function does not adopt the form of the expect value of a pairwise cost $c(x, y)$. We call such extensions of the Wasserstein barycenter problem, \emph{distributional barycenter problems}. They can be used to enforce problem-dependent desirable conditions on the conditional maps, such as proximity to prescribed priors.
 
The methodology for the solution of the data-driven distributional barycenter problem proposed in this article can be used with general cost functions. 
It improves significantly over previous approaches to the barycenter problem based on adversarial games (\cite{ELT, tabak2020conditional, yang2019conditional}). The latter have two players:  one that proposes cost-minimizing maps through time-evolving flows, and another that builds test functions to enforce the push-forward condition. The new approach slaves the test-functions to the flows, thus making the latter self-driven. Moreover, these flows are essentially non parametric, with a kernel's bandwidth as their single parameter.

\subsection{Prior work} 
While optimal transport on Riemannian manifolds has been broadly studied from an analytical perspective  (\cite{feldman2002monge, santambrogio2015optimal}), few algorithms have been proposed for its numerical solution. Most are based on a regularization of optimal transport \cite{solomon2015convolutional, tenetov2018fast} or are specific to particular manifolds \cite{yair2019optimal}. The work in \cite{lavenant2018dynamical} finds a smooth interpolation of densities on discrete surfaces using the dynamical approach of Benamou and Brenier \cite{BB}. This approach, though grounded as ours on gradient flows, uses a different flow and requires the knowledge of the densities to be transported rather than samples thereof. 

The optimal transport barycenter problem and its dual formulation were introduced in \cite{AguehBarycenters}. One of the first proposed methodologies for the numerical solution of the dual problem, a saddle point optimization problem,  appeared in \cite{carlier2015numerical}, where a modification of linear programming was adopted to compute the potentials associated to the optimal maps. Here we propose an alternative derivation of the formulation in \cite{carlier2015numerical}, better suited for the discussion leading to the algorithm proposed in section \ref{sec:F}.

\subsection{Original contribution}
The main contribution of this paper is an original methodology for the solution of the barycenter problem under general cost functions, through a time-dependent flow that pushes the marginal distributions $\rho(x|z)$ to their barycenter $\mu(y)$. Its main novel aspects are that the maps require no parameterization and that the test function enforcing that all conditional distributions be mapped to the same barycenter is slaved to the maps.
This yields a minimization problem with one constraint rather than a saddle point problem, equivalent to a minimization problem with infinitely many constraints. This reduction is achieved by proposing a specific --but sufficient--  form for the test function $F$, therefore bypassing the adversarial formulation in which a Lagrangian is minimized over maps and maximized over test functions. 

A numerical implementation based on a variation of the penalty method (\cite{nocedal2006numerical}) results in a method that builds arbitrarily complex maps through flows 
and permits the adoption of very general cost functions. In particular, a new cost function is proposed that penalizes non-isometric conditional maps, a natural way to minimize data distortion. 

\subsection{Organization of the article}
Section \ref{sec:BOT} reviews the barycenter problem and its dual. Section \ref{sec:F} proposes two specific test functions, yielding two alternative formulations, section \ref{sec:DDp1p2} develops their data driven version and section \ref{sec:penalty} introduces a penalty method for their numerical solution. Section \ref{sec:NumEx} contains numerical experiments on both synthetic data and the MNIST data set, using various test and cost functions. In particular, subsection \ref{subsec:Digits} introduces a new cost penalizing maps far from isometric, and subsection \ref{subsec:HiddenSignal} uses the barycenter problem to recover a hidden signal behind a time series defined on a sphere.

\section{Data-driven distributional barycenter problem}\label{sec:BOT}
Given a conditional probability distribution $\rho (x|z)$, the optimal transport barycenter problem seeks a target $\mu(y)$ and $z$-dependent maps $T(x, z)$  from $\rho$ to $\mu$ with minimal total transportation cost:
\begin{equation}\label{eq:DefBary}
\min_T \int \int  c(x,T(x,z)) \rho (x,z) dx dz, \quad s.t. \quad \forall z\ T\# \rho (\cdot |z) := \rho_{T}(\cdot|z) = \mu. 
\end{equation}
Examples of cost functions are $p$-norms, as in the canonical cost $c(x,y) = \frac 1 2 ||x-y||_2^2$, and the squared geodesic distance on a manifold.

We will consider the more general distributional barycenter problem
\begin{equation}\label{eq:DefDBary}
\min_T C\left(T(x, z), \rho\right), \quad s.t. \quad \forall z\ T\# \rho (\cdot |z) := \rho_{T}(\cdot|z) = \mu, 
\end{equation}
where $C$ can adopt forms different from the expected value of a pairwise cost function $c(x, T(x, z))$ of optimal transport. Examples of such more general costs include the Fermat distance introduced in \cite{sapienza2018weighted} and a cost function introduced below to penalize deviations from isometry. For concreteness and to enable comparison with prior work, we describe below our methodology in the context of regular pairwise costs $c(x, y)$, explaining afterwards how it extends, quite straightforwardly, to the general case. The only constraint on $C(T, \rho)$ is that it must admit a data-based formulation, i.e. its dependence on $\rho$ must be translatable into an expression involving only samples thereof. For the regular pairwise cost, such formulation simply replaces expected values by empirical means over the data points. 

As the pushforward condition expresses the requirement that the random variable $y=T(x,z)$ be independent of $z$, it can be rewritten without explicit reference to the unknown barycenter $\mu$. If $z$ and  $y=T(x, z)$ are independent, then $\rho_{T}(y, z)=\mu(y)\nu(z)$, so
$$
\int F(y,z)\rho_{T}(y,z) dydz= \int F(T(x,z),z)\rho(x,z) dxdz = 0 
$$
for every test function $F$ satisfying $\int F(y,z)\nu(z)dz=0$. The converse is also true, leading to the minimax formulation of the barycenter problem:
\begin{equation}\label{eq:Carl}
\begin{dcases}
\min_{T}\max_{F} \int c(x,T(x,z))\rho(x,z)dx dz + \int F(T(x,z),z)\rho(x,z)dxdz \\
\forall y \ \int F(y,z)\nu(z) dz =E_{z}[F(y,\cdot)]=0
\end{dcases}
\end{equation}
A comparison between (\ref{eq:Carl}) and the formulation in \cite{AguehBarycenters} reveals that the test function $F$ is the Lagrange multiplier $\psi(y, z)$ of the dual Kantorovich problem. 

The constraint in (\ref{eq:Carl}) can be satisfied automatically by subtracting from $F$ its expected value $E_{z}[F]$, which yields the unconstrained variational problem 
\begin{equation}\label{eq:Bary}
\min_{T}\max_{F}  L= \int c(x,T(x,z))\rho(x,z)dx dz + \int (F(y,z) - E_{z}[F])\rho_{T}(y,z)\ dydz .
\end{equation}
The first integral in (\ref{eq:Bary}) corresponds to the cost function of optimal transport, to be extended below to far more general costs.  For future reference, we will denote this integral as $L_C$, and the second integral, designed to test the fulfillment of the pushforward condition, as $L_F$:
$$ L_C = \int c(x,T(x,z))\rho(x,z)dx dz, \quad L_F = \int (F(y,z) - E_{z}[F])\rho_{T}(y,z)\ dydz. $$

As noted in \cite{TT1}, the dual Kantorovich problem is a natural starting point for a data driven formulation of optimal transport. In particular,  (\ref{eq:Bary}) has two main advantages over (\ref{eq:DefBary}): the unknown barycenter $\mu$ does not appear explicitly, and the objective function is a sum of expected values, which can be replaced by their empirical counterpart
\begin{equation}\label{eq:DataBary}
\min_{T}\max_{F} \frac{1}{N}\sum_{i} \left[c(x_{i},T(x_{i},z_{i})) +  F(T(x_{i},z_{i}),z_{i}) - \frac{1}{N}\sum_{j}F(T(x_{i},z_{i}),z_{j})\right]
\end{equation}
when only samples $\left(x_i, z_i\right)$ of $\rho(x, z)$ are available.

\section{Two choices for the test function $F(y,z)$}\label{sec:F}
This section introduces a new algorithm for the numerical solution  of the optimization problem in (\ref{eq:DataBary}). We first define an the evolution equation for $T$ through the gradient descent of $L$:

\begin{equation}\label{eq:Tdot}
 \dot T = -\left.\frac{\delta L}{\delta T}\right|_{x, z}= -\left[\nabla_y c(x, y)   + \nabla_y F(y, z) \right] \rho(x, z), \quad y = T(x, z).
\end{equation}
Notice that, for the canonical squared-distance cost, the first order optimality condition  $\dot T=0$ recovers the well-known relationship between the optimal map $T^{\ast}$ and the optimal potential $F^{\ast}$, i.e. $x = T^{\ast}(x, z) - \nabla_y F^{\ast}(y, z)$.

Thus the evolution of the map $T$ is defined in terms of the test function $F$. 
Since the role of $F$ is to penalize any dependence of $\rho_{T}(y|z)$ on $z$, it is natural to think that it should be able to resolve the family of distributions $\rho_{T}(y|z)$. The following two propositions clarify this point. We will use them to reformulate the problem in (\ref{eq:DataBary}) so that the adversarial game played by $F$ and $T$ is reduced to a pure minimization algorithm over $T$.    

\begin{proposition}\label{prop:Jensen}
 If $F(y, z)=\rho_{T}(y|z)$ then the second term ($L_F$) in (\ref{eq:Bary}) is always strictly positive unless $\rho_{T}(y|z)$ is independent of $z$. 
\end{proposition}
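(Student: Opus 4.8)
The plan is to substitute $F(y,z)=\rho_{T}(y|z)$ directly into $L_F$, simplify, and recognize the result as a $z$-averaged squared $L^2$ distance. First I would note that the constraint-enforcing subtraction in (\ref{eq:Bary}) picks out exactly the $y$-marginal: since
\[
E_{z}[F(y,\cdot)] \;=\; \int \rho_{T}(y|z)\,\nu(z)\,dz \;=\; \rho_{T}(y),
\]
the integrand of $L_F$ becomes $\bigl(\rho_{T}(y|z)-\rho_{T}(y)\bigr)\,\rho_{T}(y|z)\,\nu(z)$, using $\rho_{T}(y,z)=\rho_{T}(y|z)\nu(z)$.

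Next I would expand, add and subtract $\rho_{T}(y)^{2}$, and use $\int \rho_{T}(y|z)\,\nu(z)\,dz=\rho_{T}(y)$ together with $\int \nu(z)\,dz=1$ to rewrite
\[
L_F \;=\; \int \nu(z) \int \bigl(\rho_{T}(y|z)-\rho_{T}(y)\bigr)^{2}\, dy\, dz,
\]
which is manifestly nonnegative. Equivalently — and this is where the name of the proposition comes from — at each fixed $y$ one applies Jensen's inequality to the convex function $t\mapsto t^{2}$ to get $E_{z}\!\left[\rho_{T}(y|z)^{2}\right]\ge \bigl(E_{z}[\rho_{T}(y|z)]\bigr)^{2}=\rho_{T}(y)^{2}$, and then integrates in $y$ to conclude $L_F\ge 0$.

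For strictness, I would invoke the equality case: the displayed double integral vanishes only if $\rho_{T}(y|z)=\rho_{T}(y)$ for $\nu$-almost every $z$ and (Lebesgue-)almost every $y$, i.e. precisely when $\rho_{T}(y|z)$ is independent of $z$; in every other case $L_F>0$. I do not expect a genuine obstacle here. The only points that need a little care are the identification $E_{z}[F]=\rho_{T}(y)$, the bookkeeping in the add-and-subtract step, and stating the equality case with the appropriate ``almost everywhere'' qualifiers; one should also tacitly assume $\rho_{T}(\cdot|z)\in L^{2}$ for $\nu$-a.e.\ $z$ so that the integrals are finite.
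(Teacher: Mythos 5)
Your proof is correct and takes essentially the same route as the paper: substitute $F=\rho_T(y|z)$, observe that $E_z[F]=\bar\rho_T(y)$, and conclude nonnegativity via Jensen. Your completing-the-square rewriting $L_F=\int\nu(z)\int(\rho_T(y|z)-\bar\rho_T(y))^2\,dy\,dz$ is just the paper's pointwise-in-$y$ variance identity $L_F=\int\left(E_z[\rho_T^2(y|\cdot)]-E_z[\rho_T(y|\cdot)]^2\right)dy$ with the order of integration swapped, and it has the minor advantage of making the equality case (and its ``almost everywhere'' qualifiers) self-evident.
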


\begin{proof}
 We can rewrite $L_F$ as
 \begin{equation}
L_F = \int\left[ F(y, z) \rho_{T}(y|z)\nu(z)dz - \int F(y, z) \bar{\rho}_{T}(y)\nu(z)dz\right] dy,
 \end{equation}
where $\bar{\rho}_{T}(y)=\int \rho_{T}(y|w)\nu(w) dw$. Substituting $F(y, z)=\rho_{T}(y|z)$ yields
\begin{equation}
L_F =  \int \left(E_{z}[\rho_{T}^{2}(y|\cdot)] - E_{z}[\rho_{T}(y|\cdot)]^{2}\right)dy.
\end{equation}
By Jensen's inequality, the integrand is strictly positive for all values of $y$ unless $\rho_{T}(y|z)$ does not depend on $z$. 
\end{proof}
This result suggests adopting $F(y,z)=\lambda\rho_{T}(y|z)$, a test function that evolves as the conditional distributions $\rho(x|z)$ are pushed forward toward their barycenter $\mu$. With this choice, the infinite dimensional maximization of (\ref{eq:Bary}) over $F$ reduces to the maximization over the scalar $\lambda$:
%
\begin{problem}\label{prob:CKDE}
\begin{multline}\nonumber
 \min_T \max_\lambda \int c(x,T(x,z))\rho(x,z) dx dz + \\
 +\lambda \int \left[\rho_{T}(x|z) - \int \rho_{T}(x|w) \nu (w) dw \right] \rho_{T}(x,z) dx dz.
\end{multline}
 \end{problem}
Section \ref{sec:penalty} discusses in detail how to solve numerically Problem \ref{prob:CKDE}. Here we just point out that:
1) At all times, the information we have on $\rho_{T}(x, z)$ consists of samples thereof, i.e. the points $y^{i}=T(x^{i},z^{i})$ that have been transported by $T$, and 2) Since $L_F$ is non-negative, the maximization over $\lambda$ can be implemented through a penalty method, reducing Problem \ref{prob:CKDE} to a pure minimization problem. 

We show next that, alternatively, we can choose as test function $F(y,z)$ the product of two related functions, depending on $y$ and $z$ respectively:

\begin{proposition}\label{prop:Jensen2}

If $F(y, z)=f(y)g(z)$ where $g(z)=\int f(y) \rho_{T}(y|z)dy$, then $L_F$ in (\ref{eq:Bary}) is strictly positive unless the expected value of $f(y)$ under $\rho_{T}(y|z)$ is independent of $z$.
\end{proposition}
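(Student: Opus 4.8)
The plan is to follow closely the argument used for Proposition \ref{prop:Jensen}. First I would recall the rewriting of $L_F$ established in that proof, valid for an arbitrary test function,
\begin{equation}\nonumber
L_F = \int\left[\int F(y,z)\rho_{T}(y|z)\nu(z)\,dz - \int F(y,z)\bar\rho_{T}(y)\nu(z)\,dz\right]dy,
\end{equation}
with $\bar\rho_{T}(y)=\int\rho_{T}(y|w)\nu(w)\,dw$. Then I would substitute the proposed $F(y,z)=f(y)g(z)$ and simplify the two terms separately.

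For the first term, carrying out the $z$-integration gives $\int g(z)\nu(z)\left(\int f(y)\rho_{T}(y|z)\,dy\right)dz$, and the defining relation $g(z)=\int f(y)\rho_{T}(y|z)\,dy$ collapses the inner bracket to $g(z)$, so the first term equals $\int g(z)^2\nu(z)\,dz=E_{z}[g^2]$. For the second term, note that $\int F(y,z)\nu(z)\,dz=f(y)\int g(z)\nu(z)\,dz=f(y)\,E_{z}[g]$, so the second term is $E_{z}[g]\int f(y)\bar\rho_{T}(y)\,dy$; swapping the order of integration and again invoking the definition of $g$ yields $\int f(y)\bar\rho_{T}(y)\,dy=\int\nu(w)\left(\int f(y)\rho_{T}(y|w)\,dy\right)dw=\int g(w)\nu(w)\,dw=E_{z}[g]$, so the second term equals $E_{z}[g]^2$. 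Combining, $L_F=E_{z}[g^2]-E_{z}[g]^2$, the variance of $g$ under $\nu$, which by Jensen's inequality is nonnegative and vanishes precisely when $g$ is $\nu$-a.e.\ constant, i.e.\ when $z\mapsto\int f(y)\rho_{T}(y|z)\,dy$ does not depend on $z$ — exactly the claimed exceptional case.

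The only step demanding care is the bookkeeping in the second term: one must recognize that, after subtracting $E_{z}[F]$, what remains is exactly $E_{z}[g]^2$, which rests on the identity $\int f(y)\bar\rho_{T}(y)\,dy=E_{z}[g]$ obtained by Fubini. Everything else is a direct substitution, so I expect no genuine obstacle; the result is the natural ``rank-one'' analogue of Proposition \ref{prop:Jensen}, with the full conditional density replaced by a single statistic $f$ of $y$.
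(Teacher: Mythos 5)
Your proof is correct and follows the same route as the paper, which simply asserts ``it is not difficult to see'' that $L_F = \int g^2\nu\,dz - (\int g\nu\,dz)^2$ and then invokes Jensen; you have supplied the omitted computation explicitly, reusing the rewriting of $L_F$ from Proposition~\ref{prop:Jensen} and the defining identity for $g$ together with Fubini. The variance characterization at the end (vanishing iff $g$ is $\nu$-a.e.\ constant) correctly delivers the ``strictly positive unless $E_{\rho_T(\cdot|z)}[f]$ is $z$-independent'' conclusion.
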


\begin{proof}
It is not difficult to see that, with $F$ given as above, one has  
\begin{equation}\label{eq:Test}
L_F = \int g(z)^2 \nu(z) dz - \left(\int g(z) \nu(z) dz \right)^2.
\end{equation}
By Jensen's inequality, (\ref{eq:Test}) is always non-negative, vanishing only if $g$ is independent of $z$.
\end{proof}
Under Proposition \ref{prop:Jensen2} we can relax (\ref{eq:Bary}) into 
\begin{problem}\label{prob:factor}
\begin{equation}\nonumber
 \min_{T}\max_{f} \int c(x,T(x,z))\rho(x,z) dx dz
 +\int g(z)^{2}\nu(z)dz - \left(\int g(z)\nu(z)dz\right)^{2}
\end{equation}
where $g(z)=\int f(y)\rho_{T}(y|z)dy$.
\end{problem}
\noindent
This formulation enforces the independence of $\rho_{T}(y|z)$ from $z$ in a weak sense, with test function $f(y)$. For instance, restricting $f$ to linear functions $f=\lambda y$ enforces that the conditional mean $\bar{y}(z)$ of $\rho_{T}(y|z)$  be independent of $z$. Notice that, in this case and under the canonical cost, the descent equation (\ref{eq:Tdot}) implies that the map $T$ is a $z$-dependent rigid translation, precisely the minimal family of maps able to remove conditional means.

These considerations suggest a preconditioning procedure whereby, rather than seeking the full barycenter from the start, one first limits the family of test functions and maps, yielding a less detailed but faster procedure that brings the $\rho(x|z)$ closer to each other. In particular, one can perform a preconditioning whereby only the conditional mean of $\rho(x|z)$ is removed, through a $z$-dependent rigid translation. Under the canonical cost, performing this preconditioning and subsequently computing the barycenter of the resulting push-forward distributions, results in the same barycenter that one would have found directly from the original ones. The proof, which
extends arguments in \cite{kuang2017preconditioning} to the barycenter problem, is the content of the following proposition.

\begin{proposition}
Consider the following, two-stage procedure for finding the barycenter  of the conditional distributions $\rho(x|z)$ under the canonical cost
$ c(x, y) = \frac{1}{2}\|x - y\|^2$.
First restrict the maps to the $z$-dependent rigid translations
$$ w = T_1(x, z) = x + \bar{x} - \bar{x}(z), $$
which make the conditional means of the resulting random variable $W$ match.  Then find the full barycenter of the resulting conditional distributions $\mu_1(w|z)$ through a map 
$ y = T_2(w, z) $.
Then the composition of the two maps,
$$ y = T(x, z) = T_2\left(T_1(x, z), z\right) $$
solves the original barycenter problem.
\end{proposition}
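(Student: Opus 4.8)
The plan is to show that the barycenter problem separates cleanly under the canonical cost because its cost functional is a perfect square that splits into a term depending only on the conditional means and a term depending only on the centered distributions. First I would record the decomposition: writing any map as $T(x,z) = (x - \bar x(z)) + r(x,z)$ where $r$ absorbs both the translation and the subsequent map, and using $\|x - y\|^2 = \|x - \bar x(z)\|^2 + \ldots$ after subtracting and adding $\bar x(z)$, one sees that for a fixed target mean the optimal map must carry $\bar x(z)$ to a common point, so that the total cost splits as
\[
L_C(T) = \frac{1}{2}\int \|\bar x(z) - \bar x\|^2 \,\nu(z)\,dz + \frac{1}{2}\int\int \|\tilde x - \tilde T(\tilde x, z)\|^2 \,\tilde\rho(\tilde x, z)\,d\tilde x\,dz,
\]
where tildes denote the mean-subtracted (centered) variables and $\tilde\rho(\cdot|z)$ is exactly the push-forward $\mu_1(w|z)$ up to the fixed shift $\bar x$. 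The first term is a constant, independent of the map entirely; the cross term vanishes precisely because, on each fiber $z$, the barycenter of a family of distributions with matched means also has that same mean (a consequence of the first-order optimality / the fact that the $L^2$ barycenter's mean is the average of the means), so the displacement $\tilde T(\tilde x,z) - \tilde x$ integrates to zero against $\tilde\rho(\cdot|z)$.

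The key steps, in order: (i) verify that $T_1$ is the unique optimal translation-only map equalizing conditional means — this is Problem \ref{prob:factor} with $f$ linear, and the descent equation (\ref{eq:Tdot}) already identifies the minimizer as a rigid translation; (ii) verify that the barycenter $\mu$ of $\{\rho(\cdot|z)\}$ has conditional-mean structure compatible with factoring through $\mu_1$, i.e. the mean of $\mu$ equals $\bar x$, so the two-stage composition lands distributions with the correct common mean; (iii) expand $L_C(T_2 \circ T_1)$ using the Pythagorean splitting above and observe that it equals (constant) $+ L_C^{(1)}(T_2)$, where $L_C^{(1)}$ is the cost functional of the barycenter problem for the $\mu_1(w|z)$; (iv) conclude that minimizing over $T$ of the composed form is equivalent to minimizing $L_C^{(1)}$ over $T_2$, whose minimizer is by definition the full barycenter map of the $\mu_1(w|z)$, and that the push-forward constraint is preserved under composition. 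The argument is essentially the one in \cite{kuang2017preconditioning}, transported from the two-distribution optimal transport setting to the barycenter setting by applying it fiberwise in $z$ and integrating.

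The main obstacle I expect is step (iii): justifying that the cross term
\[
\int\int \langle \tilde x - \tilde T_2(\tilde x, z),\ \bar x \rangle \,\tilde\rho(\tilde x, z)\,d\tilde x\,dz
\]
genuinely vanishes. This requires knowing that the optimal $T_2$ preserves the (already matched) conditional mean on every fiber — equivalently, that the common mean is a fixed point of the barycenter operation. That in turn follows from taking the first-order condition of the barycenter problem, integrating (\ref{eq:Tdot}) at equilibrium against $\rho(x,z)$, and using that $\nabla_y F^\ast$ integrates to zero in the appropriate sense; alternatively one invokes the known fact that for the $L^2$-Wasserstein barycenter the mean is linear (the barycenter of translates is the translate of the barycenter). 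One must also be slightly careful that "the full barycenter of the $\mu_1(w|z)$" and "the full barycenter of the $\rho(x|z)$" refer to the same $\mu$ up to the shift $\bar x$; this is immediate once the mean-preservation is in hand, since the centered problems are identical. The remaining steps are routine substitutions and the observation that composing a push-forward-to-$\mu_1$ map with the $\mu_1$-to-$\mu$ map yields a push-forward-to-$\mu$ map.
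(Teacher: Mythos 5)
Your proposal is correct in spirit but takes a genuinely different route from the paper. The paper does not expand the cost functional at all: it invokes the characterization of optimality for the $L^2$ barycenter from \cite{AguehBarycenters,kuang2019sample} — namely that (i) $T(\cdot,z)$ be the gradient of a $z$-dependent convex potential $\phi(\cdot,z)$, and (ii) each $y$ be the geometric barycenter of its pre-images, $E_z[T^{-1}(y,z)]=y$ — and simply verifies that both properties, already holding for $T_2$ because $\mu$ is the barycenter of the $\mu_1(\cdot|z)$, survive pre-composition with the rigid translation $T_1$. That is a four-line proof once you accept the characterization theorem, but it depends on that black box. Your Pythagorean decomposition of $L_C$ is more elementary and self-contained and, in fact, delivers extra quantitative information (the cost splits exactly into a mean-shift term plus the cost of the centered problem), which is closer to the spirit of \cite{kuang2017preconditioning}.

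One simplification you should make to your own argument: the cross term you flag as the ``main obstacle'' is not actually delicate. Write any admissible $T$ as $T(x,z)=\bar\mu+\tilde T(\tilde x,z)$ with $\tilde x=x-\bar x(z)$ and $\bar\mu$ the mean of the common target. The push-forward constraint $T\#\rho(\cdot|z)=\mu$ already forces $\int\tilde T(\tilde x,z)\,\tilde\rho(\tilde x|z)\,d\tilde x=0$ for every $z$, so the cross term $\int\!\!\int\langle\bar x(z)-\bar\mu,\;\tilde x-\tilde T(\tilde x,z)\rangle\,\tilde\rho(\tilde x|z)\,\nu(z)\,d\tilde x\,dz$ vanishes identically, with no appeal to first-order conditions or to the barycenter's mean-linearity. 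The decomposition then holds for every admissible $T$, not just composed ones, which is what you need to conclude that minimizing over all $T$ reduces to choosing $\bar\mu=\bar x$ (minimizing the first term) and taking $\tilde T$ to be the barycenter map of the centered conditionals (minimizing the second), i.e.\ exactly $T_2\circ T_1$. Also note that, as displayed, your cross term has $\bar x$ where it should have $\bar x(z)-\bar\mu$; this is immaterial once you see the vanishing is automatic, but it should be corrected if you write the argument out.
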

\label{prop3}
\begin{proof}
Clearly the distribution $\mu(y)$ is independent of $z$, since $\mu$ is the barycenter of the $\mu_1(w|z)$. To prove optimality, it is enough \cite{AguehBarycenters,kuang2019sample} to show that 
\begin{enumerate}
\item $T$ is the gradient of a convex function:
$$ T(x, z) = \nabla_x \phi(x, z), \quad \phi(:, z) \ \hbox{convex for all $z$,} $$

\item every point $y$ is the geometrical barycenter of its pre-images under $T(x, z)$,
$$\forall y \  E_z\left[T^{-1}(y, z)\right] = y. $$

\end{enumerate}

\noindent
Since $\mu$ is the barycenter of the $\mu_1(w|z)$, both properties hold for $T_2$:
$$ T_2(w, z) = \nabla_w\psi(w, z), \quad \psi(:, z) \ \hbox{convex for all $z$,} \quad \forall y \  E_z\left[T_2^{-1}(y, z)\right] = y.$$
Then
$ T(x, z) = T_2\left( x + \bar{x} - \bar{x}(z), z\right) = \nabla_x \phi(x, z)$,
where
$ \phi(x, z) = \psi\left( x + \bar{x} - \bar{x}(z), z\right)$
is convex in $x$ for all values of $z$.
Also $ T^{-1}(y, z) = T_2^{-1}(y, z) + \bar{x}(z) - \bar{x}$,
so
$$  E_z\left[T^{-1}(y, z)\right]  = E_z\left[T_2^{-1}(y, z)\right] + \bar{x} - \bar{x} = y, $$
concluding the proof.

\end{proof}

Two natural questions arise from proposition \ref{prop3}: can one perform pre-conditioning under more general cost functions, and can one implement richer pre-conditioners that bring the $\rho(x|z)$ closer to each other than merely translating them so that their conditional means match. To answer these questions, notice that proposition \ref{prop3} allows one to start the follow-up barycenter problem directly from the $\mu_1(w|z)$ resulting from the pre-conditioning map, without any reference to the original random variable $X$. However, one does know the conditional pairing of $X$ and $W$, i.e. the map $w = T_1(x, z)$ or, in the data-driven case, the point $x_i$ that each $w_i$ originated from. It follows that one can perform pre-conditioning under any cost function $C(T, \rho)$ and with any family of test functions $F$, provided that, in the subsequent full barycenter problem, though starting from the $W = T_1(X, z)$, one computes the cost $C$ in terms of the original $X$:
$$ C_2\left(T, \mu_1\right) = C\left(T * T_1, T_1^{-1}\#\mu_1\right). $$
In the data-driven setting developed below, this formula simply translates into using $x_i$ in lieu of $w_i$ in $C$.

\section{Data-driven formulations}\label{sec:DDp1p2}
This section discusses the numerical representation of $\rho(y|z)$ and its use for implementing data-driven versions of Problems \ref{prob:CKDE} and \ref{prob:factor}. 

\subsection{Data driven Problem \ref{prob:CKDE}}
The map $y = T(x, z)$ is built from the composition of near-identity maps which yield, at each time-step of the algorithm, a current state of the map and a corresponding current conditional density $\rho_{T}(y|z)$. This conditional density, which evolves from $\rho(y|z)$ to $\mu(y)$, is known at all times through the points $y^{i}=T(x^{i},z^{i})$. A natural way to estimate $F(y, z) = \rho_{T}(y|z)$ from these samples is through a conditional kernel density estimation (CKDE) in the Nadaraya-Watson form (\cite{rosenblatt1969conditional,de2003conditional}):
\begin{equation}\label{eq:Fzcat}
F(y, z_k)= \rho_{T}(y|z_k) \approx \frac{\sum_i  \cKa (y,y_i)\cKb (z_k, z_i)}{\sum_j \cKb(z_k, z_j) } = \sum_{i=1}^{N}  \cKa (y,y_i)Z_{ik}.
\end{equation}
The kernel functions $\cKa(y,y_{i})$, nonnegative and normalized so as to integrate to one, have centers $y_{i}$ and  bandwidth $a$ --the algorithm's only free parameter, other than the choice of the kernels themselves, for which  isotropic Gaussians were adopted in all the numerical examples below. The matrix $Z \in \R^{N\times N}$ is a normalized version of similar kernels in $z$-space:
\begin{equation}\label{eq:Zik}
 Z_{ik} = \frac{\cKb(z_k,z_i)}{ \sum_{j=1}^{N} \cKb(z_k,z_j) }.
\end{equation}
With this choice for $F$, the empirical version of the term in square brackets in Problem \ref{prob:CKDE} adopts the form
\begin{equation}
 \rho_{T}(y_l|z_l) - \mathbb{E}_z \rho_{T}(y_l|z) \approx \sum _i \cKa(y_l,y_i) \left[ Z_{il} - \frac{1}{N}\sum_k Z_{ik} \right]=\sum_i \cKa(y_l, y_i) C_{il},
\end{equation}
where the $N$ by $N$ matrix
\begin{equation}\label{eq:Cil}
 C_{il} =  Z_{il} - \frac{1}{N}\sum_k Z_{ik}
\end{equation}
can be precomputed at the onset of the procedure, since the values of $z_{i}$ remain unchanged throughout. Then the complete data driven formulation of Problem \ref{prob:CKDE} adopts the simple form
\begin{equation}\label{eq:DDprob1}
\min_y \max_\lambda \sum_i c(x_i,y_i) + \lambda  \sum_{i,l} \cKa(y_l,y_i) C_{il}.
\end{equation}

\subsection{Data driven Problem \ref{prob:factor}}
\label{sec:feature}
In order to evaluate (\ref{eq:Test}) from sample points, we rewrite $g(z)$ in the form 
\begin{equation}
 g(z) = \int f(y) \rho_{T}(y|z) dy = \int f(y) \frac{\rho_{T}(y, z)}{\nu(z)} dy = \int f(y) \frac{\rho_{T}(y, w)}{\nu(w)} \delta(w-z) dy dw,
\end{equation}
and propose the mollification
\begin{equation}
\delta(w-z) \approx \cKb(w,z), \quad \nu(w) = \frac{1}{n} \sum_j \cKb\left(w,z_j\right),
\end{equation}
with a positive kernel $\cKb$ with bandwidth $b$ that integrates to one. Then
\begin{equation}\label{eq:repg}
g(z) \approx \sum_k f\left(y_k\right) \frac{\cKb(z,z_k)}{\sum_l \cKb\left(z_l,z_k\right)},
\end{equation}
which we can substitute in the test function $F(y,z)$ according to the proposal in Proposition \ref{prop:Jensen2}, i.e. $F(y,z)=\lambda f(y)g(z)$. The resulting test component $L_F$ of the Lagrangian is 
\begin{multline}
L_F = \sum_{i}\left[F(T(x_{i},z_{i}),z_{i}) - \frac{1}{N}\sum_{j}F(T(x_{i},z_{i}),z_{j})\right]= \\
\lambda\sum_{i}f(y_{i})\sum_{k}f(y_{k})\left[Z_{ki}-\frac{1}{N}\sum_{j}Z_{kj} \right]=\lambda\sum_{i,k}f(y_{i})f(y_{k})C_{ki},
\end{multline}
where $y_{i}=T(x_{i},z_{i})$ and the matrices $Z_{ik}$ and $C_{il}$ are those defined in (\ref{eq:Zik}) and (\ref{eq:Cil}). The overall data-driven version of Problem \ref{prob:factor} with fixed test function $f$ then becomes
\begin{equation}\label{eq:DDprob2}
\min_y \max_\lambda \sum_i c(x_i,y_i) + \lambda  \sum_{i,k}  f(y_{i})f(y_{k})C_{ki}.
\end{equation}
The choice of the function $f$ specifies a relaxation of the pushforward condition, with $f(y)=y^l$ ($y^l$ here stands for the $l$th component of $y$) corresponding to moving each conditional mean $\bar{x^l}(z)$ to the mean $\bar{y^l}$ of the barycenter. To match the conditional means of all components $y^l$, as well as to enforce other moments, we can choose $f$ to be a vectorial function whose entries can be chosen, for instance, as a polynomial basis:  $f(y)=[f_1(y), f_2(y),.., f_m(y)]$, corresponding to the solution of 
\begin{equation}\label{eq:DDprob2b}
\min_y \max_\lambda \sum_i c(x_i,y_i) +  \lambda \sum_{i,k,l} f_{l}(y_{i})f_{l}(y_{k})C_{ki}.
\end{equation}
Notice that we do not need an independent factor $\lambda_l$ for each $f_l$, as each term
$\sum_{i,k} f_{l}(y_{i})f_{l}(y_{k})C_{ki}$ is independently non-negative, vanishing only when the expected value of $f_l$ agrees for all values of $z$ (We have proved this in proposition 2 for the problem posed in terms in distributions, and we will prove it below for the sample-based problem.) We may, however, weight each $f_l$ differently if desired. For instance, we might want to start with most of the weight on the linear components of $f$, so as to enforce the agreement of the conditional means, then slowly increase the weight of the quadratic components, to match all conditional covariances, and add more terms, either higher order polynomials or localized features, for a more detailed fulfillment of the pushforward condition. However, we have found empirically that simply pre-conditioning first with a linear $f$ is enough to speed up the subsequent convergence of problem 1 in all of its generality, bypassing the need for the ``continuous preconditioning'' that the procedure just described would entail.

\subsection{An alternative conditional density estimator}

Formula (\ref{eq:Zik}) for the matrix $Z_{ik}$ is not the only choice that makes (\ref{eq:Fzcat}) a robust conditional density estimator. The core requirements for $Z$ are:
\begin{enumerate}
  \item The entries $Z_{ik}$ must be nonnegative and add up to zero row-wise:
  $$ Z_{ik} \ge 0, \quad \sum_i Z_{ik} = 1, $$
to guarantee that the estimated $\rho_T(y|z_k)$ is positive and integrates to one. 
  
  \item $Z_{ik}$ must be large when $z_i$ and $z_k$ are close to each other, and small when they are far away. This follows from conceptualizing (\ref{eq:Fzcat}) as a regular kernel density estimation which has the various centers $y_i$ weighted by $Z_{ik}$. Then $Z_{ik}$ must provide a measure of how relevant $y_i$ is for an estimation of $\rho_T(y|z_k)$, i.e. how close the $z_i$ associated with $y_i$ is to $z_k$. The notion of closeness is, of course, problem dependent. For instance, for categorical factors $z$, a choice for $Z_{ik}$ vanishes whenever $z_i \ne z_k$.
\end{enumerate}

The particular form (\ref{eq:Zik}) for $Z_{ik}$ satisfies these properties, and it leads to robust and accurate numerical results in all examples that we have tried. Yet the resulting matrix $Z_{ik}$ is asymmetric, as only its rows, not its columns, are normalized. For reason that the following subsection will clarify, we prefer a matrix $Z$ that is symmetric and positive definite. Since a symmetric matrix $Z$ with nonnegative entries whose rows add up to one is necessarily bi-stochastic, a natural candidate is the unique bi-stochastic matrix $\tilde{Z}$ that derives from the symmetric and positive Kernel matrix
$ K_{ik} = \cKb (z_k, z_i)$
through Sinkhorn's factorization:
$ \tilde{Z} = D K D, $
where $D$ is a diagonal matrix with positive diagonal entries. 
 Since $K$ is positive definite, so is $\tilde{Z}$, which also satisfies the required properties for (\ref{eq:Fzcat}) to provide a consistent conditional density estimator, and is in fact better balanced than $Z$, in the sense that all points $y_i$ have the same total weight (For points whose $z_i$ is an outlier, this weight concentrates mostly in self-estimation, while for points with $z_i$ in the core of the $z$-distribution, the weights are distributed among neighboring points in $z$, not necessarily $y$.)

We have found the numerical results with $\tilde{Z}$ and $Z$ to be nearly indistinguishable. Since $\tilde{Z}$ comes with better theoretical guarantees, we use $\tilde{Z}$ in the remaining of the article and in the numerical examples, renaming it $Z$ to avoid notational clumsiness.

The kernel-based matrix $Z_i^j$ is suitable for continuous factors $z$ with a notion of distance among points. Clearly, for categorical factors $z$, it should be replaced by the simpler
$$ Z_i^j = \begin{cases}\frac{1}{N_i} & \hbox{for $z_i = z_j$} \cr
                            0 & \hbox{otherwise,} \end{cases} \quad N_i =|\{z: z = z_i\}|,$$
also bi-stochastic, which simply discriminates among classes. To avoid repeating proofs and arguments, this can be considered as a particular case of the kernel-based $Z$ with vanishing small bandwidth, so that different $z_i$ do dot interact.

\subsection{Positivity of $L_F$ in the data-driven problem}
We saw in section \ref{sec:F} that, for the two particular choices of the test function $F$ corresponding to problems 1 and 2,  the $L_F$ in (\ref{eq:Bary}) is strictly positive unless all the marginals $\rho_T(y|z)$ agree. The positivity of $L_F$ allows us to pre-multiply it by a positive scalar $\lambda$, replacing the minimax formulation by a penalized minimization. We show here that the positivity of $L_F$ also holds in its data-driven version.

\begin{proposition}
 If the kernel matrices $\cKa$ and $\cKb$, with entries $\cKa(y_{k},y_{i})$ and $\cKb(z_{k},z_{i})$ respectively, are non-negative definite, then the test component $L_F$ of both (\ref{eq:DDprob1}) and (\ref{eq:DDprob2}) is non negative.  
\end{proposition}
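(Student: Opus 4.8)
The plan is to read off both data-driven versions of $L_F$ as manifestly non-negative bilinear expressions in a single symmetric positive semidefinite matrix, namely the matrix $C$ of (\ref{eq:Cil}). First I would record the algebraic structure of $C$. Let $\mathbf{1}\in\R^{N}$ be the all-ones vector and $P=I-\tfrac1N\mathbf{1}\mathbf{1}^{\top}$ the associated centering projection, so that $P=P^{\top}=P^{2}$ and $P\mathbf{1}=0$. Definition (\ref{eq:Cil}) reads $C_{il}=Z_{il}-\tfrac1N(Z\mathbf{1})_{i}$, i.e. $C=ZP$. Recalling that in the present context $Z$ is the symmetric, bi-stochastic matrix which is moreover positive semidefinite (being a positive diagonal rescaling $DKD$ of the positive semidefinite kernel matrix $K_{ik}=\cKb(z_{k},z_{i})$), we have $Z\mathbf{1}=\mathbf{1}$ and $\mathbf{1}^{\top}Z=\mathbf{1}^{\top}$, hence $C=ZP=Z-\tfrac1N\mathbf{1}\mathbf{1}^{\top}=PZ$, and therefore $C=PZP$. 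In particular $C$ is symmetric, and $v^{\top}Cv=(Pv)^{\top}Z(Pv)\ge 0$ for every $v\in\R^{N}$, so $C\succeq 0$. (The categorical choice of $Z$ is the vanishing-bandwidth limit of this construction and is itself symmetric, bi-stochastic and positive semidefinite — block diagonal with blocks $\tfrac1{N_{i}}\mathbf{1}\mathbf{1}^{\top}$ — so it is covered as well.)

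Next I would dispatch the two cases. For the data-driven form (\ref{eq:DDprob2}) of Problem \ref{prob:factor}, put $\phi_{i}=f(y_{i})$; then the test sum equals $\sum_{i,k}\phi_{i}\phi_{k}C_{ki}=\phi^{\top}C\phi\ge 0$ by positive semidefiniteness of $C$, and the vectorial/polynomial variant (\ref{eq:DDprob2b}) is the sum $\sum_{l}(\phi^{(l)})^{\top}C\,\phi^{(l)}$ of such terms — which, incidentally, is exactly why a single multiplier $\lambda$ suffices in (\ref{eq:DDprob2b}). For the data-driven form (\ref{eq:DDprob1}) of Problem \ref{prob:CKDE}, let $A$ be the matrix $A_{li}=\cKa(y_{l},y_{i})$, symmetric and, by hypothesis, positive semidefinite. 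Grouping the double sum by the shared index gives $\sum_{i,l}\cKa(y_{l},y_{i})C_{il}=\sum_{l}(AC)_{ll}=\tr(AC)$, and it remains to invoke the elementary fact that the trace of a product of two positive semidefinite matrices is non-negative: writing a spectral decomposition $A=\sum_{j}\sigma_{j}u_{j}u_{j}^{\top}$ with $\sigma_{j}\ge 0$, one gets $\tr(AC)=\sum_{j}\sigma_{j}\,u_{j}^{\top}C u_{j}\ge 0$. Since in both problems $L_{F}$ is this non-negative sum times the non-negative multiplier $\lambda$, the claim follows.

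I do not anticipate a serious obstacle. The only ingredient that goes beyond ``$Z$ is a consistent conditional density estimator'' is the identity $C=PZP$, which relies on $Z$ being bi-stochastic (both row and column normalized) — precisely the reason the paper replaced the asymmetric estimator (\ref{eq:Zik}) by the Sinkhorn-symmetrized $\tilde{Z}$. The remaining work is purely bookkeeping: tracking the index ordering when rewriting the double sums as $\phi^{\top}C\phi$ and $\tr(AC)$, and pinning down where each hypothesis enters — positive semidefiniteness of $\cKb$ (through $K$, hence through $Z=DKD$) is what makes $C\succeq 0$, while positive semidefiniteness of $\cKa$ is what makes $\tr(AC)\ge 0$.
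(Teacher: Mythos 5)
Your proof is correct and follows essentially the same strategy as the paper's: reduce to showing $C\succeq 0$, exploit the bi-stochasticity of $Z$ to do so, and then invoke non-negativity of $\tr(AC)$ for PSD $A,C$; your identity $C=PZP$ is just a cleaner matrix-level packaging of the paper's index manipulation $\sum_{i,k}x_iC_{ik}x_k=\sum_{i,k}(x_i-\bar x)Z_{ik}(x_k-\bar x)$. One small bonus of your version: since $Z=\tilde Z$ is symmetric bi-stochastic, $C=PZP$ is in fact symmetric, which makes the trace step entirely standard — the paper's parenthetical remark that ``$C$, in general, is not'' symmetric applies to the un-symmetrized estimator of (\ref{eq:Zik}), not to the $\tilde Z$ actually used in the proof.
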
  
\begin{proof}
Notice that it is enough to show that the matrix $C$ given by (\ref{eq:Cil}) is non-negative definite, i.e. that
\begin{equation}
 \forall x, \ x^T C x \ge 0.
 \label{Cpos}
\end{equation}
This sufficiency of (\ref{Cpos}) for (\ref{eq:DDprob2}) is obvious, as its test component is precisely a sum of terms of this form, but (\ref{Cpos}) is also sufficient for (\ref{eq:DDprob1}), since its test component is the inner product of $C$ and $\cKa$, and the inner product of two non-negative definite matrices is a non-negative number, even when only one of them is symmetric ($C$, in general, is not.)
To prove (\ref{Cpos}), write
\begin{eqnarray*}
 \sum_{i,j} x_i C_{ij} x_j &=& \sum_{i,j} x_i Z_{ij} x_j - \frac{1}{N} \sum_{k,i,j} x_i Z_{ik} \left(x_j - x_k + x_k\right) \\  
 &=& - \frac{1}{N} \sum_{k,i,j} x_i Z_{ik} \left(x_j - x_k\right) \\ 
 &=& \sum_{k,i} x_i Z_{ik} \left(x_k - \bar{x}\right) \\ 
 &=& \sum_{k,i} \left(x_i - \bar{x}\right) Z_{ik} \left(x_k - \bar{x}\right) \ge 0,
\end{eqnarray*}
as the matrix $Z$ (i.e. $\tilde{Z}$) is non-negative definite by construction.

\end{proof}

\subsection{Extension to general cost functions}

We have so far restricted the cost component $L_C$ of the objective function to the expected value of a pairwise cost $c(x, T(x, z))$, as pertains optimal transport. However, it is clear from the data-based formulations derived that the only requirement one must impose on $L_C = C(T, \rho)$ is that $\rho$ should only appear through the expected value of functions, which can be replaced by their empirical counterpart when only samples $(x_i, z_i)$ of $\rho$ are known. Thus, for instance, in lieu of the pairwise cost
$ L_C = \int c(x, T(x, z)) \rho(x, z) dx dz$,
one may propose cost functions involving two points and their images under a common factor $z$,
$$ L_C = \int c\left(x_1, T\left(x_1, z\right), x_2, T\left(x_2, z\right)\right) \rho(x_1|z) \rho(x_2|z) \nu(z) dx_1 dx_2 dz.  $$
We will propose one such cost in an example below on hand-written digits, a cost that penalizes deviations of $T(:, z)$ from an isometry. A data-driven version of a cost of this form is
$$ L_C = \frac{1}{N^2} \sum_{i,j} c\left(x_i, T\left(x_i, z_i\right), x_j, T\left(x_j, z_j\right)\right) Z(z_i, z_j), $$
involving the bi-stochastic matrix $Z_i^j = Z(z_i, z_j)$ introduced above.

Most formulas in this article are written, for concreteness, in terms of pairwise cost functions. In order to apply them to more general costs, it is enough to insert the corresponding expression for $L_C$ and its derivatives, while all the formulas concerning $L_F$ remain unaltered.

\section{A penalty method}\label{sec:penalty}

Both (\ref{eq:DDprob1}) and (\ref{eq:DDprob2}) are minimax problems of a special kind, where the maximization is carried out over a single positive scalar quantity $\lambda$ whose optimal value is unbounded (as perteins the Lagrange multiplier of a single constraint requiring a non-negative quantity, $L_F$, to vanish). More effective than maximizing $L$ over $\lambda$ is to use a penalty method, whereby
$\lambda$ is externally increased at each iteration step to as to progressively enforce the constraint. 

Among the possible strategies for controlling $\lambda$, we propose one guaranteeing that $L_F$ decreases at every step, while not making $\lambda$ grow so fast as to effectively make the minimization of $L_C$ a secondary goal. The procedure applies to both (\ref{eq:DDprob1}) and (\ref{eq:DDprob2}), for concreteness we describe it here for (\ref{eq:DDprob1}):   
\begin{enumerate}
\item Initialize $y_i=x_i$, $\lambda = \lambda_0>0$, and a maximum number of iteration $niter$. The iteration count starts from $n=0$, and the learning rate from $\eta = \eta^0$. Precompute the matrix $C$ as defined in \eqref{eq:Cil}.

If $\lambda$ is sufficiently small, $L_C$ dominates the objective function, making the minimization problem convex ($L_C$ is typically convex, at least near the identity map.) Therefore, we set $\lambda_0 = 1/\max(\mbox{abs}(\mbox{eigs} [-F_{xx}(x)])) $ resulting in a semi positive definite Hessian.

\item While $n < niter $ and $y$ has not yet converged, tentatively evolve the learning rate through the formula $\eta^{n+1} = \min \{ 2.01 \eta^n , \eta^0 \}$.
\begin{enumerate}
\item Calculate the derivatives of the objective function (\ref{sec::gradHess}.)

\item Compute $\lambda^{n+1}$ according the criteria below, whith $\alpha > 0$:
\begin{multline}
 \left\langle \nabla_y c(x,y) + \lambda \nabla_y \sum_{l} \cKa(y_l,y)C_{:l}, \nabla_y \sum_{l} \cKa(y_l,y)C_{:l} \right\rangle \geq \\
 \geq \alpha \left\langle  \nabla_y \sum_{l} \cKa(y_l,y)C_{il}, \nabla_y \sum_{l} \cKa(y_l,y)C_{il} \right\rangle ,
\end{multline}
which implies the lower bound for $\lambda$: 
\begin{equation} \lambda \geq \alpha - \frac{ \langle \nabla_y c(x,y)  , \nabla_y \sum_{l} \cKa(y_l,y)C_{:l} \rangle   }{  \langle  \nabla_y \sum_{l} \cKa(y_l,y)C_{il}, \nabla_y \sum_{l} \cKa(y_l,y)C_{:l} \rangle } = \lambda_{min}.
\label{eq::lambda}
\end{equation}
Here the inner product $\langle \rangle$ between two functions $f(x,y,:)$ and $g(x,y,:)$ is defined as $\langle f, g\rangle = \sum_i f(x_i, y_i,i) g(x_i, y_i,i)$. 
If $\lambda_{min}$ is larger than $\lambda^n$ and smaller than a threshold $\lambda^{max}$, set $\lambda^{n+1} = \lambda_{min}$. Otherwise, if $\lambda_{min} > \lambda^{max}$, set $\lambda^{n+1} = \lambda^{max}$, else set $\lambda^{n+1}=\lambda^{n}$. This guarantees that $\lambda^{n+1}$ is not smaller than $\lambda^n$, so $L(y,\lambda^{n+1}) \geq L(y,\lambda^n)$ is satisfied for any $y$.

\item Update $y$ using either gradient descent:
\begin{equation} \label{eq:GD}
y^{n+1}=y^n - \eta \nabla_y L(y,\lambda^{n+1})|_{y=y^{n}},
\end{equation}
or implicit gradient descent \cite{essid2019implicit}:
\begin{equation}\label{eq:firstOrd}
 y^{n+1}=y^n - \eta \left(I+\eta \nabla_{yy} L(y,\lambda^{n+1})|_{y=y^{n}}\right)^{-1} \nabla_y L(y,\lambda^n)|_{y=y^{n}}.
\end{equation}
These update rules couple the points $y_{i}\in \R^{d}$ in different ways, as discussed in the appendix. 
\item Check that the objective function decreases after the step,
\begin{equation}\label{eq:SecondOrd}
L(y^{n+1},\lambda^{n+1}) \leq L(y^{n},\lambda^{n+1}),
\end{equation}
where the kernel centers are evaluated at $y^{n+1}$ on both sides of the inequality (see the appendix).
Otherwise decrease $\eta$ to $\eta/2$ and repeat (c) and (d) until it does.
\end{enumerate}
\end{enumerate}

The intuition behind the criteria for updating $\lambda$ is the following. The two components of the objective function push the map $T(x, z)$ in opposite directions: while $L_F$ decreases as $\rho_T$ approaches the barycenter $\mu$, $L_C$ decreases as $\rho_T$ returns to $\rho$, as the cost is typically minimal at $T(x, z) = x$. The two components are also different in nature: $L_F$ represents the hard constraint that $y = T(x, z)$ be independent of $z$, while the minimization of $L_C$ establishes a selection criterion among all maps satisfying $L_F = 0$. Because of this, one should always pick $\lambda$ large enough that the direction of gradient descent of the full Lagrangian $L$ is also a direction of descent for $L_F$. This condition reads
$$ \left<\frac{\delta}{\delta T} \left[L_C + \lambda L_F\right], \frac{\delta}{\delta T} L_F\right> \ge 0, $$
a requirement that we make more precise by establishing a threshold $\alpha > 0$:
$$ \left<\frac{\delta}{\delta T} \left[L_C + \lambda L_F\right], \frac{\delta}{\delta T} L_F\right> \ge \alpha \left<\frac{\delta}{\delta T} L_F, \frac{\delta}{\delta T} L_F\right> , $$
which is the content of (\ref{eq::lambda}).
Notice that, if the optimum is reached for the prior $\lambda^{n}$, then the first order condition yields:
$$
\nabla_y c(x,y) + \lambda^n \nabla_y \sum_l \cKa(y_l,y)C_{il} = 0 \implies \nabla_y c(x,y) = -\lambda^n  \nabla_y \sum_l \cKa(y_l,y)C_{il},
$$
so \eqref{eq::lambda} yields $\lambda_{min} = \alpha - (-\lambda^n) = \alpha + \lambda^n$. This suggests setting adaptively $\alpha = \omega \lambda^n$, whith $\omega \in (0,1)$.

\section{Numerical examples}\label{sec:NumEx}

This section presents four representative numerical examples in different dimensions and with different types of covariates, to: (1) demonstrate the ability to work with cost functions different from the canonical $L^{2}$ and the effect that different choices for the cost have, and (2) show applicability to times series analysis with data distributed on a Riemannian manifold.

\subsection{ Barycenter of three ellipses under different costs}\label{subsec:Ellipses}

A toy example shows how the choice of a cost function affects the properties of the barycenter. The data points $x_i \in \mathbb{R}^2$ are sampled from $3$ uniform densities supported on 3 ellipses with different centers and shapes, and labelled by the discrete cofactor $z_i \in \{ 0, 1, 2\}$. The major axes of the ellipses are horizontal for $z=1,2$ (in green and yellow) and  vertical for $z=0$ (in blue). All ellipses have eccentricity $e = \frac{2\sqrt{2}}{3}$, with $100$ points sampled from each. 

We apply our algorithm with cost function induced by the $p$-norm:
$$
c (x , y) = \sum_{i=1}^2 |  x_i - y_i  |^p,\quad x,y\in \mathbb{R}^2,\quad p\in \mathbb{R}, \quad p\geq 1.
$$
The test functions in $x$ space are constructed by solving Problem \ref{prob:factor}, using as features $f_{j}$  polynomials up to the second degree, i.e. $y_1$, $y_2$, $y_1^2$, $y_1 y_2$ and $y_2^2$.  To address the issue that, for $p<2$, the Hessian of the cost function is degenerate at the origin, we utilize the approximation $|x| \approx \sqrt{x^2 + \epsilon } - \sqrt{\epsilon}$ ,with $\epsilon = 0.01$. The data $x_i$ and barycenter $y_i$ (in purple), with $p\in \{ 1.2, 1.5, 2, 2.5, 3\}$ are shown in Figure \ref{fig:ellipses}. 

\begin{figure}[h!bp]
\hspace{-1.6cm}
\includegraphics[width=1.2\textwidth]{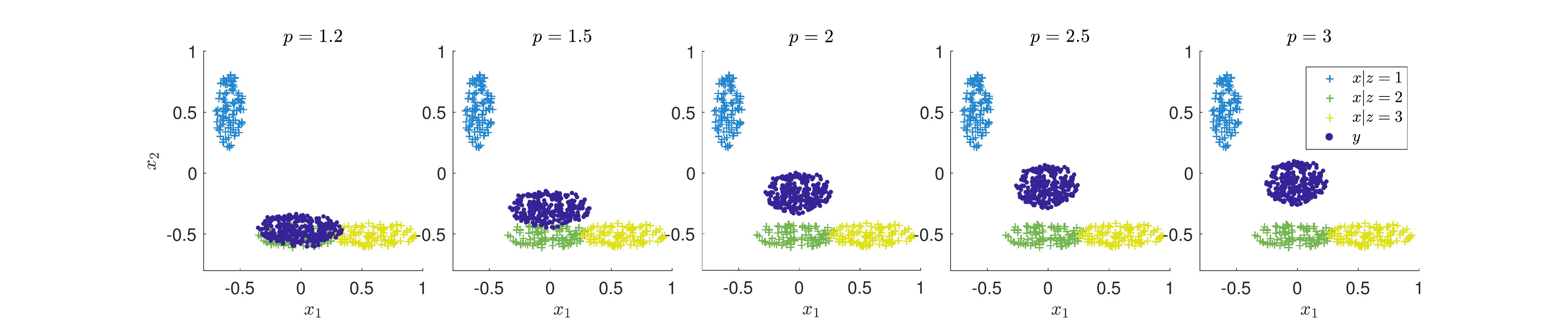}
\caption{Barycenters with $p$-norm-based costs with different values of $p$. The color in the source data refers to  the index $z$ of the cluster, while the barycenter is displayed in purple. 
 }
\label{fig:ellipses}
\end{figure}

Large values of $p$ penalize outliers, i.e. distributions that are far from the barycenter. Thus the barycenter for $p$ large must be such that no distribution is far from it. On the other hand, for $p$ close to $1$, majority rules: the average distance to the barycenter must be minimal. In our case, the ``outlier'' , both in shape and position, is the cluster $z=0$ in blue. Thus in Figure \ref{fig:ellipses}, when $p$ is small, the  barycenter (in deep purple) is closer in shape and position to the ellipses with $z=1, 2$, while, as $p$ increases, the barycenter shifts gradually from the bottom to the middle of the figure and becomes nearly isotropic, so as not to be far from any cluster, including the outlier.

\subsection{ Handwritten digits  }\label{subsec:Digits}

We use the MNIST dataset \cite{lecun-98} to display the effect of the test functions chosen for Problem 2, contrast this with the non-parametric Problem 1, and illustrate how a non-pairwise cost function can help impose desired features on the barycenter. The MNIST dataset contains handwritten digits from $0$ to $9$. For each digit, we randomly select $6$ images, which we randomly displace, and then compute their barycenter under various test functions and costs.

\subsubsection{Effect of test function }
We first demonstrate the effect of the richness of the test functions adopted, keeping as cost function the standard squared Euclidean distance. Two different sets of test functions are used: first order polynomials, which only detect the discrepancy in the conditional means, and polynomials up to $2$nd order, testing both conditional mean and covariance.
We then compare these results to the nonparametric algorithm (\ref{eq:DDprob1}), after preconditioning by subtracting the conditional mean. The results are displayed in Table \ref{table:MNIST}. Qualitative improvements can be observed when the test function becomes richer. For example, the barycentric images are noisy when only the conditional mean is aligned, the edges are clearer when second order polynomials are adopted, and the nonparametric approach outperforms both choices.

\begin{table}[h!]
\centering
\setlength{\tabcolsep}{0.5pt}
\begin{tabular}{ccccccccccc}\\ \hline
& 0 & 1 & 2 & 3 & 4 & 5 & 6 & 7 & 8 & 9 \\ 
\rotatebox{90}{}&
\includegraphics[width=0.1\textwidth]{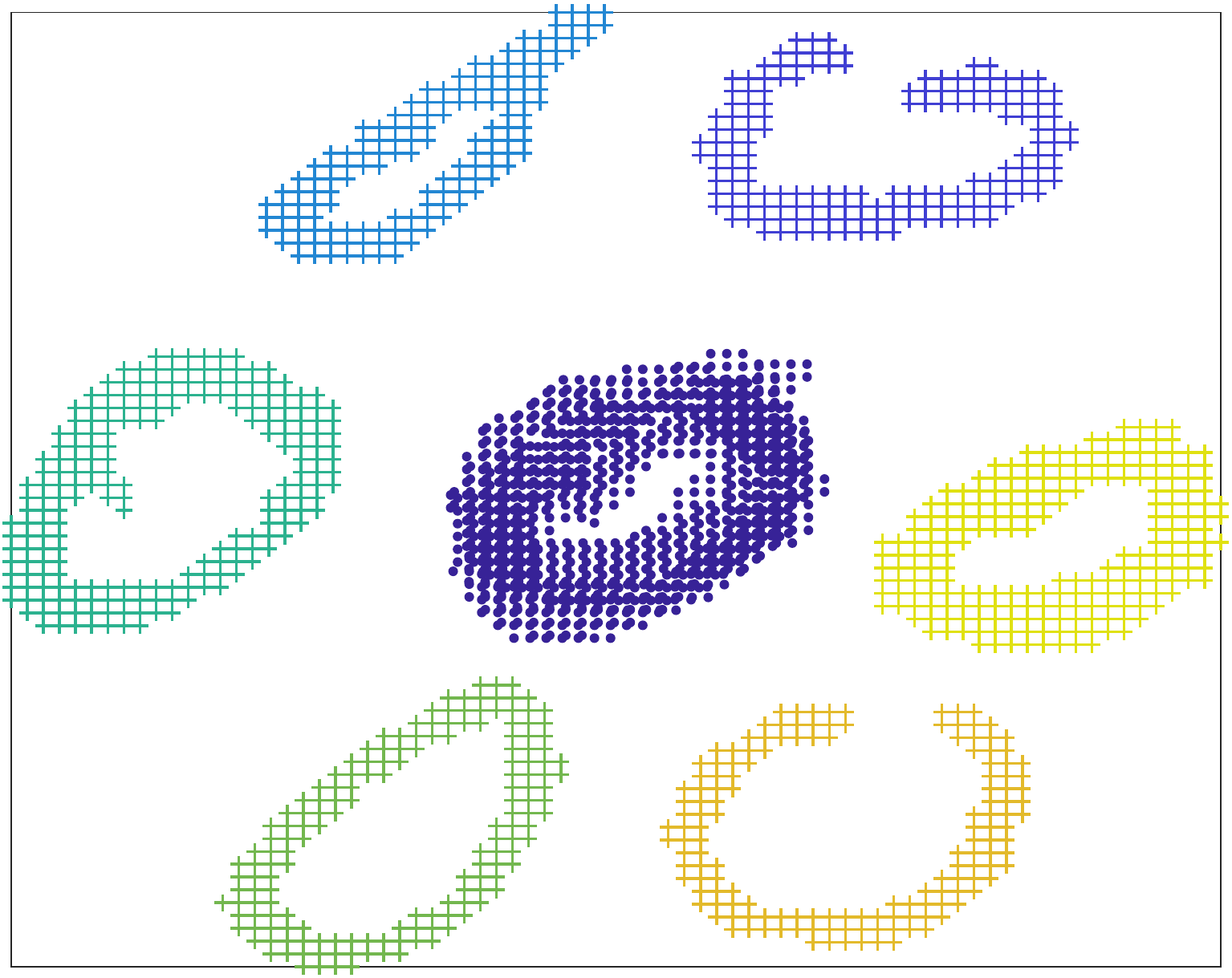} &
\includegraphics[width=0.1\textwidth]{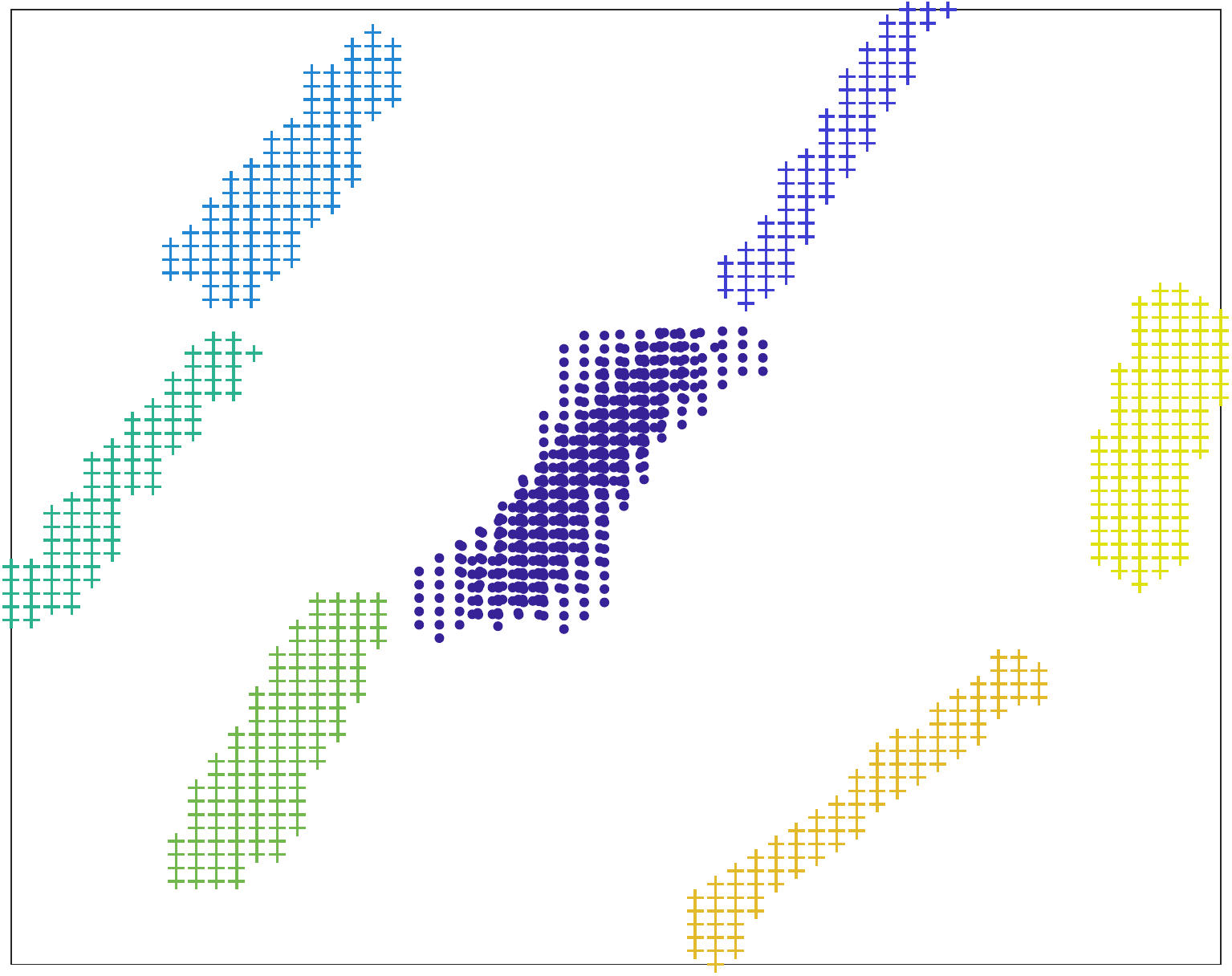} &
\includegraphics[width=0.1\textwidth]{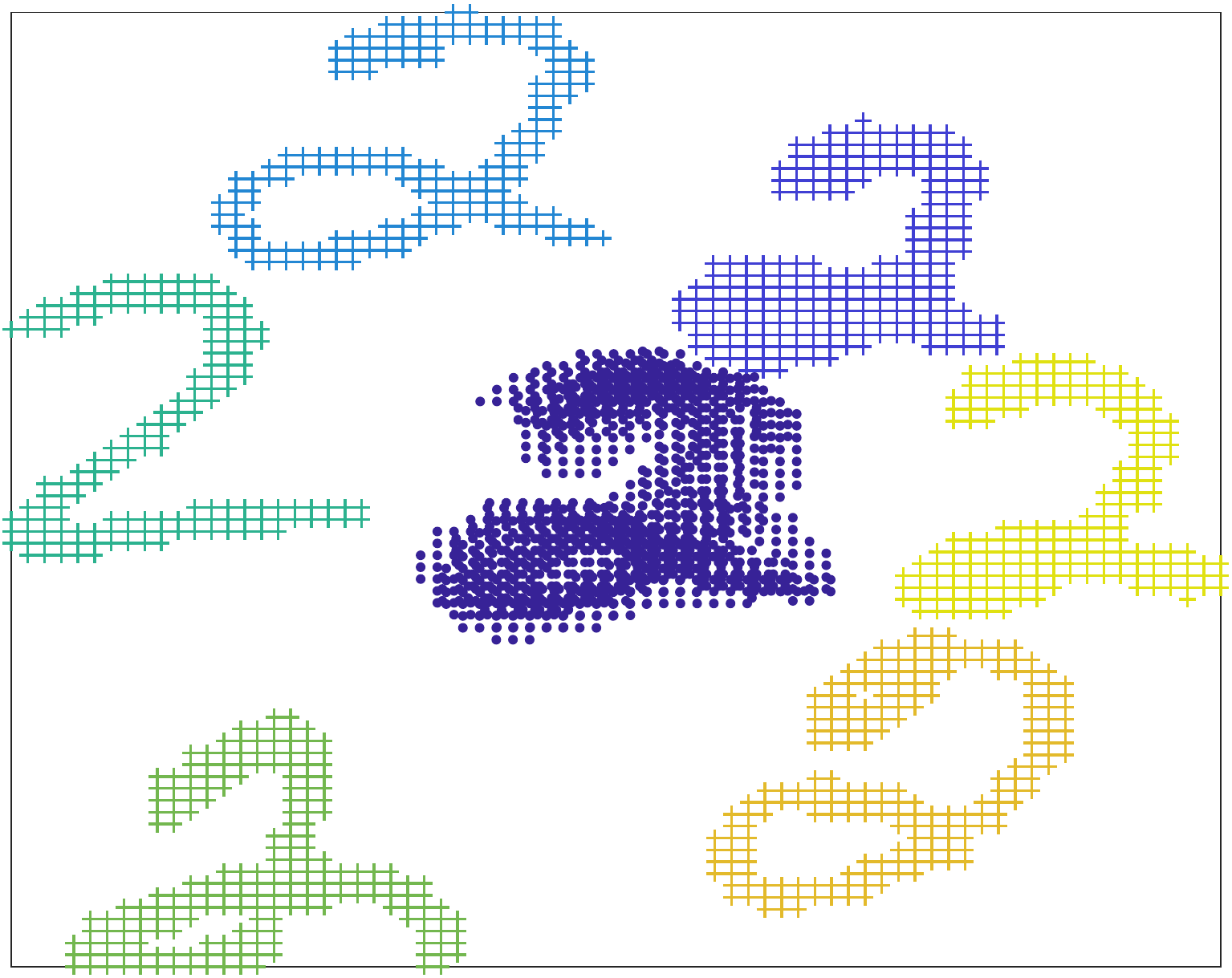} &
\includegraphics[width=0.1\textwidth]{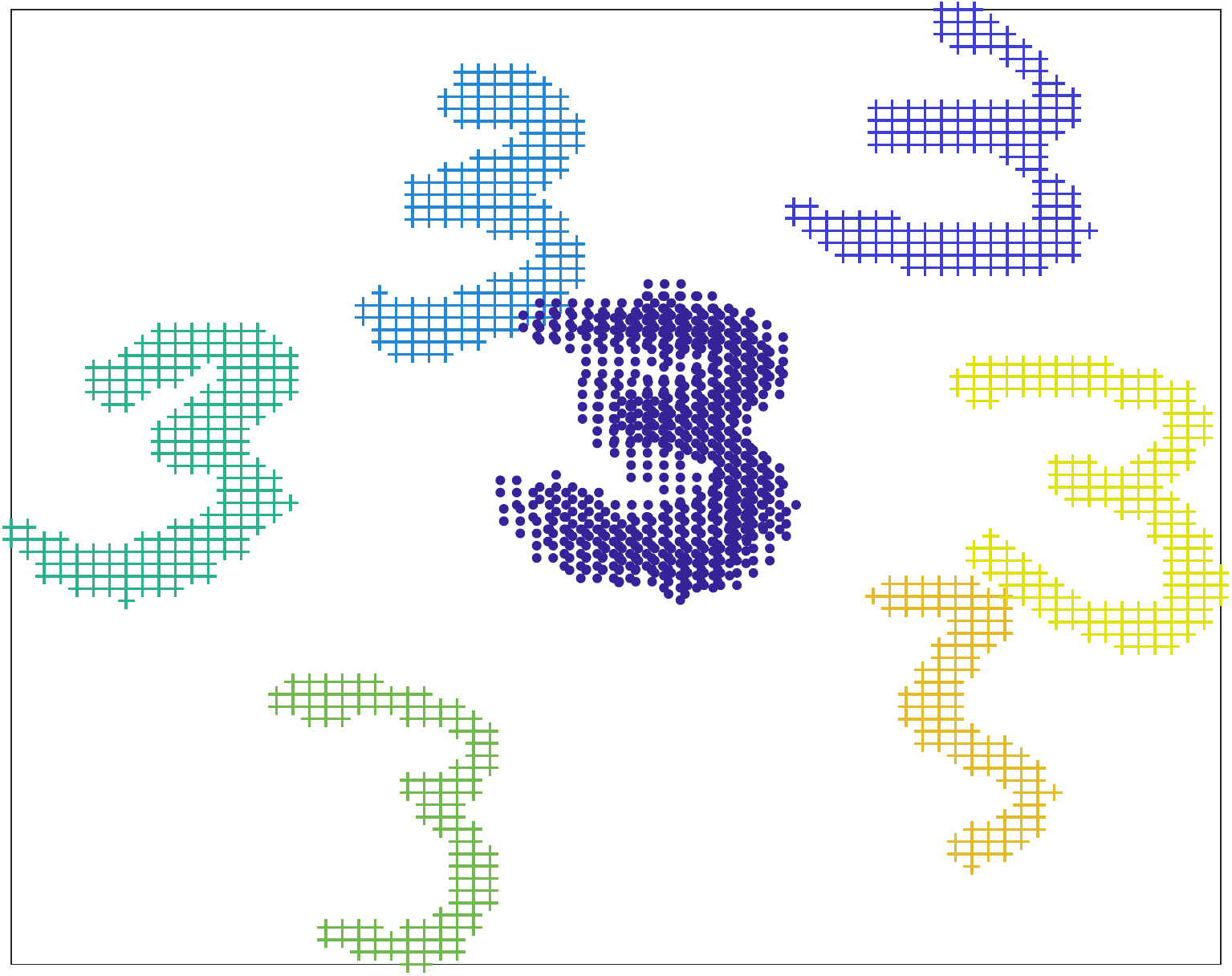} &
\includegraphics[width=0.1\textwidth]{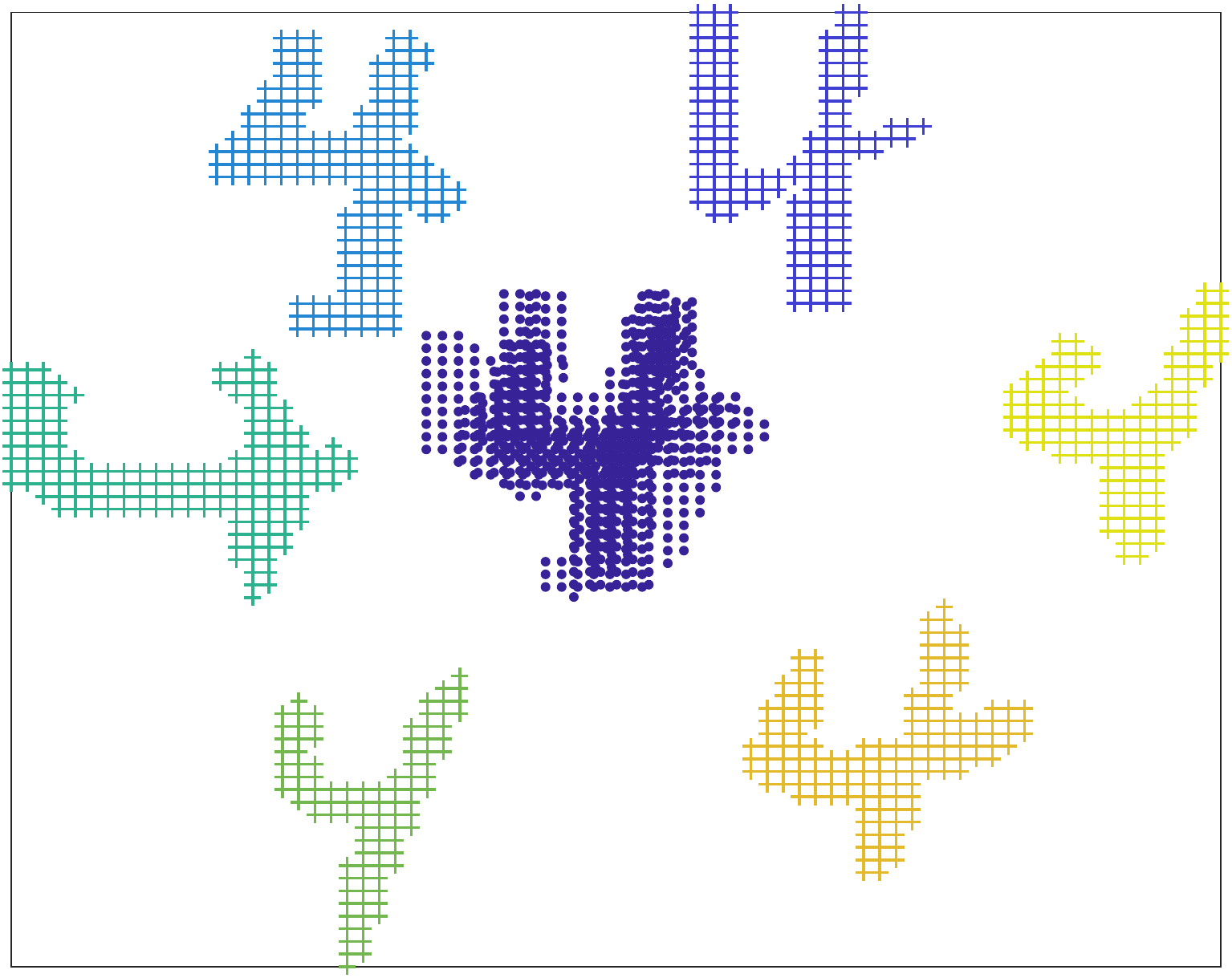} &
\includegraphics[width=0.1\textwidth]{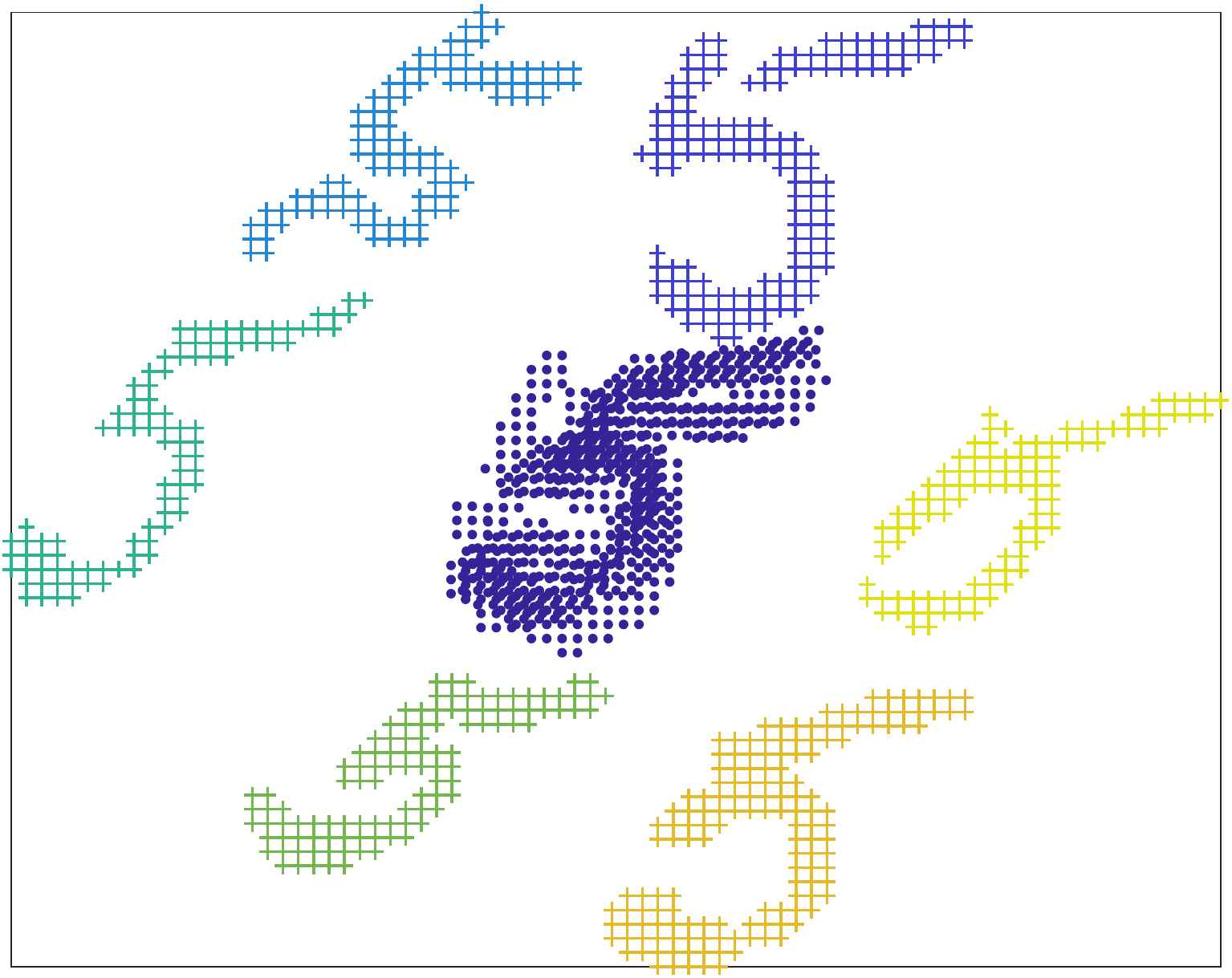} &
\includegraphics[width=0.1\textwidth]{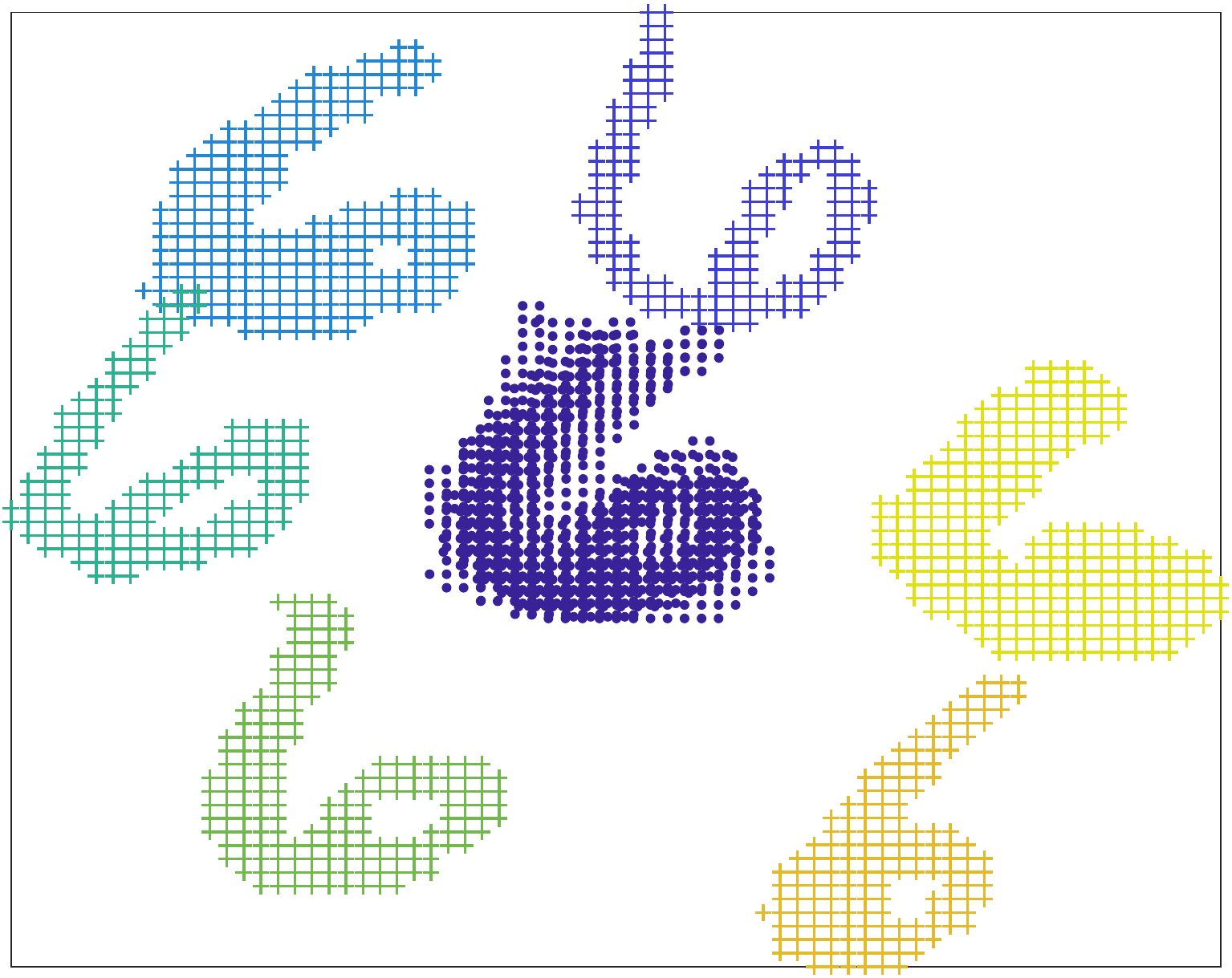} &
\includegraphics[width=0.1\textwidth]{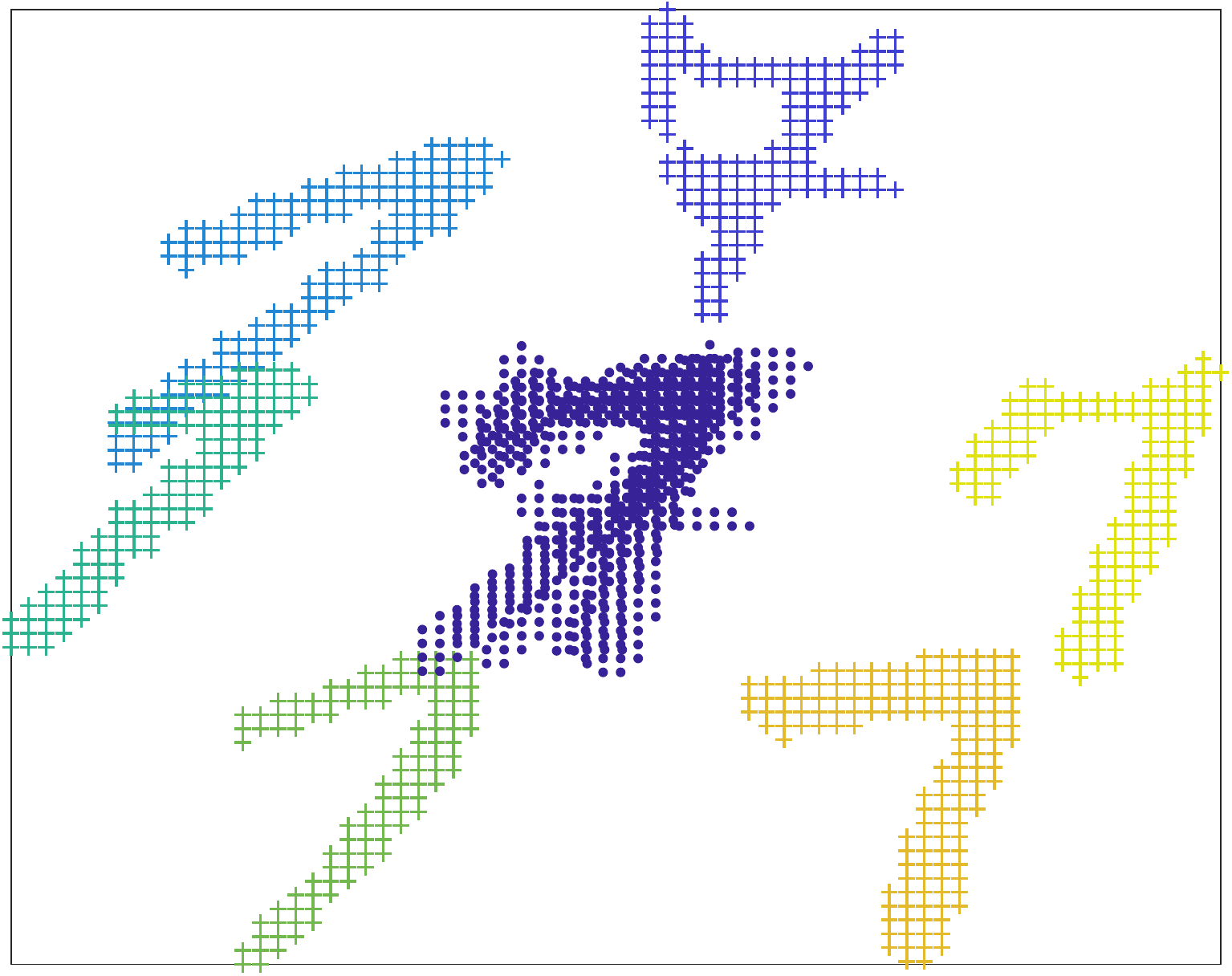} &
\includegraphics[width=0.1\textwidth]{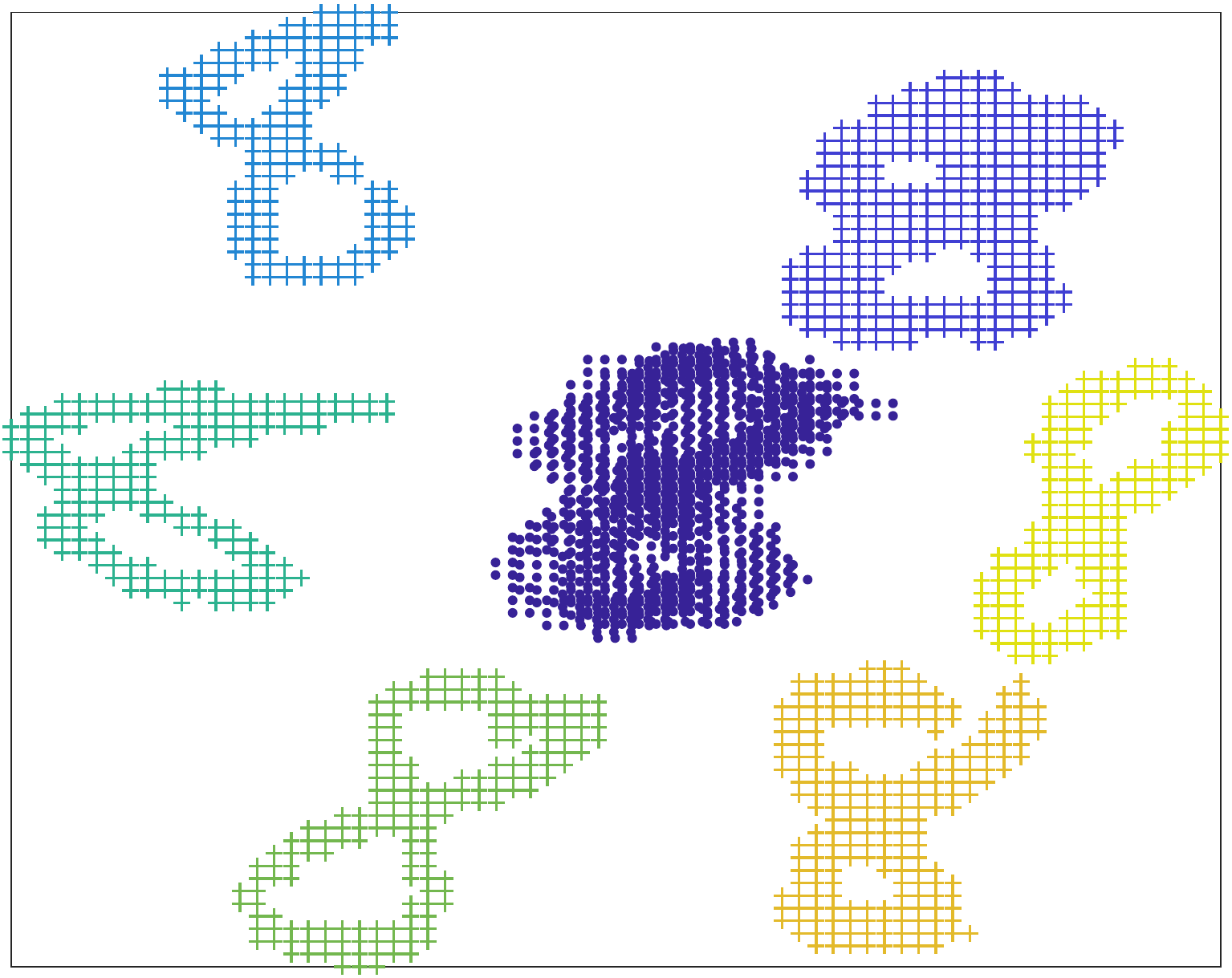} &
\includegraphics[width=0.1\textwidth]{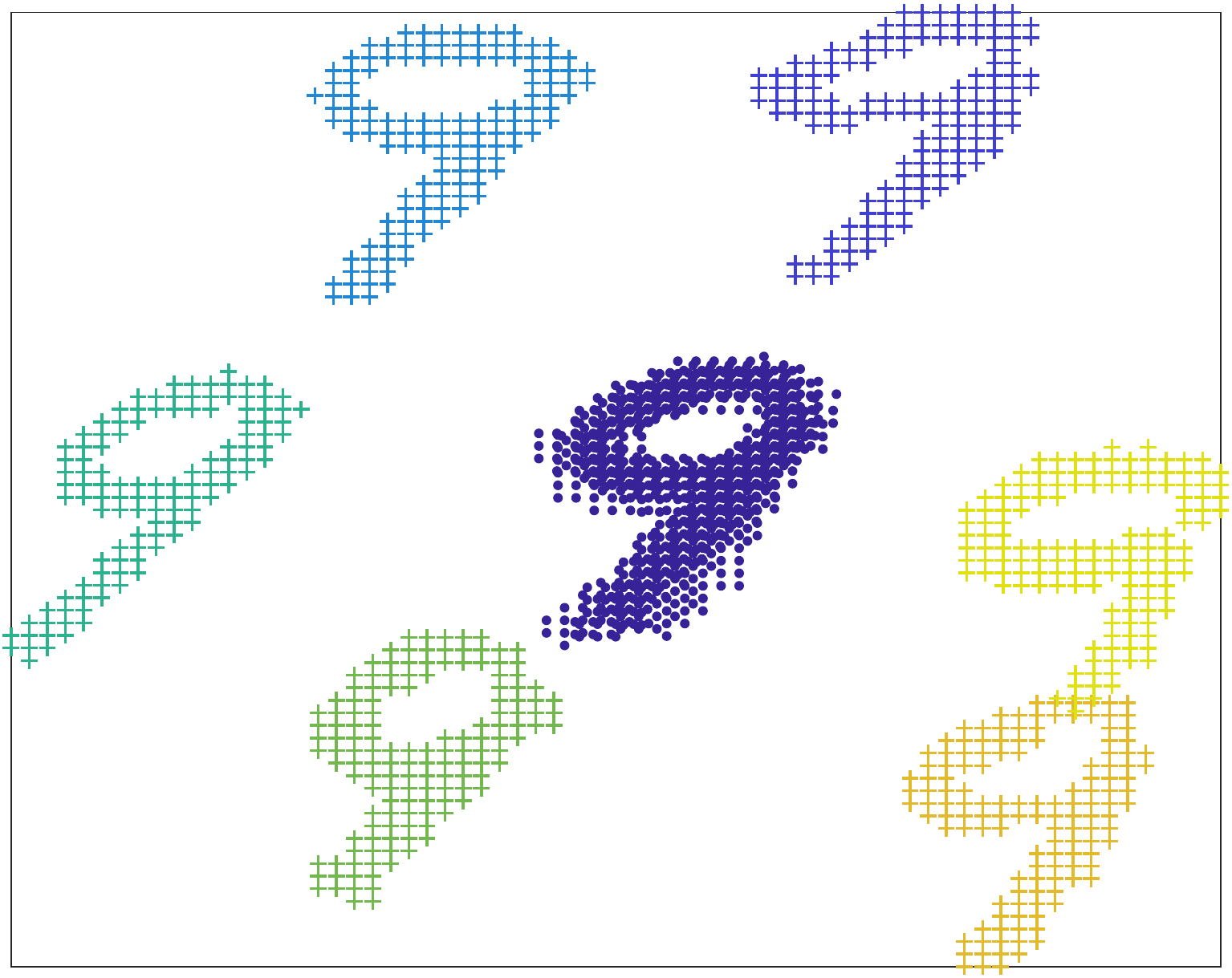}\\\hline
\rotatebox{90}{}&
\includegraphics[width=0.1\textwidth]{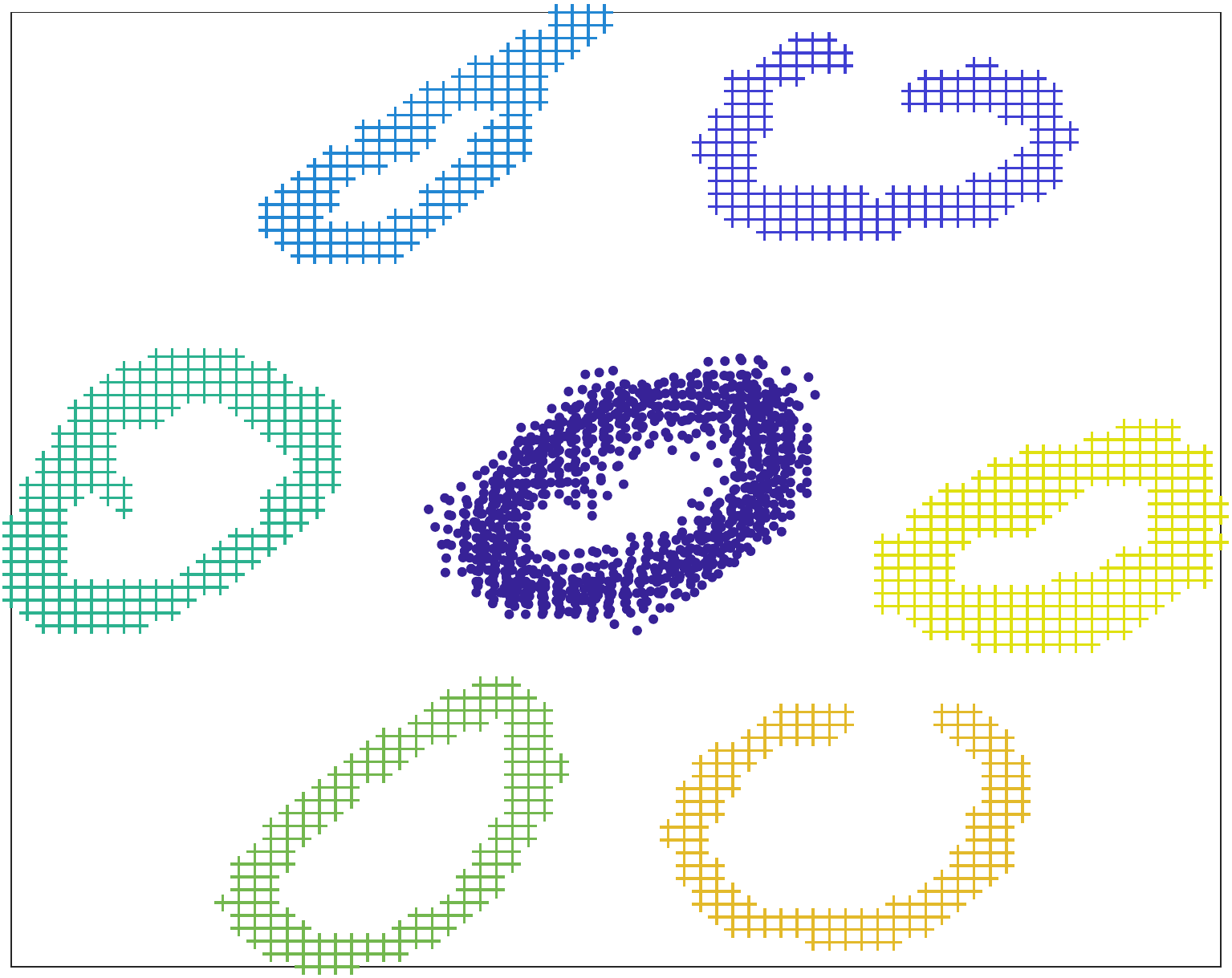} &
\includegraphics[width=0.1\textwidth]{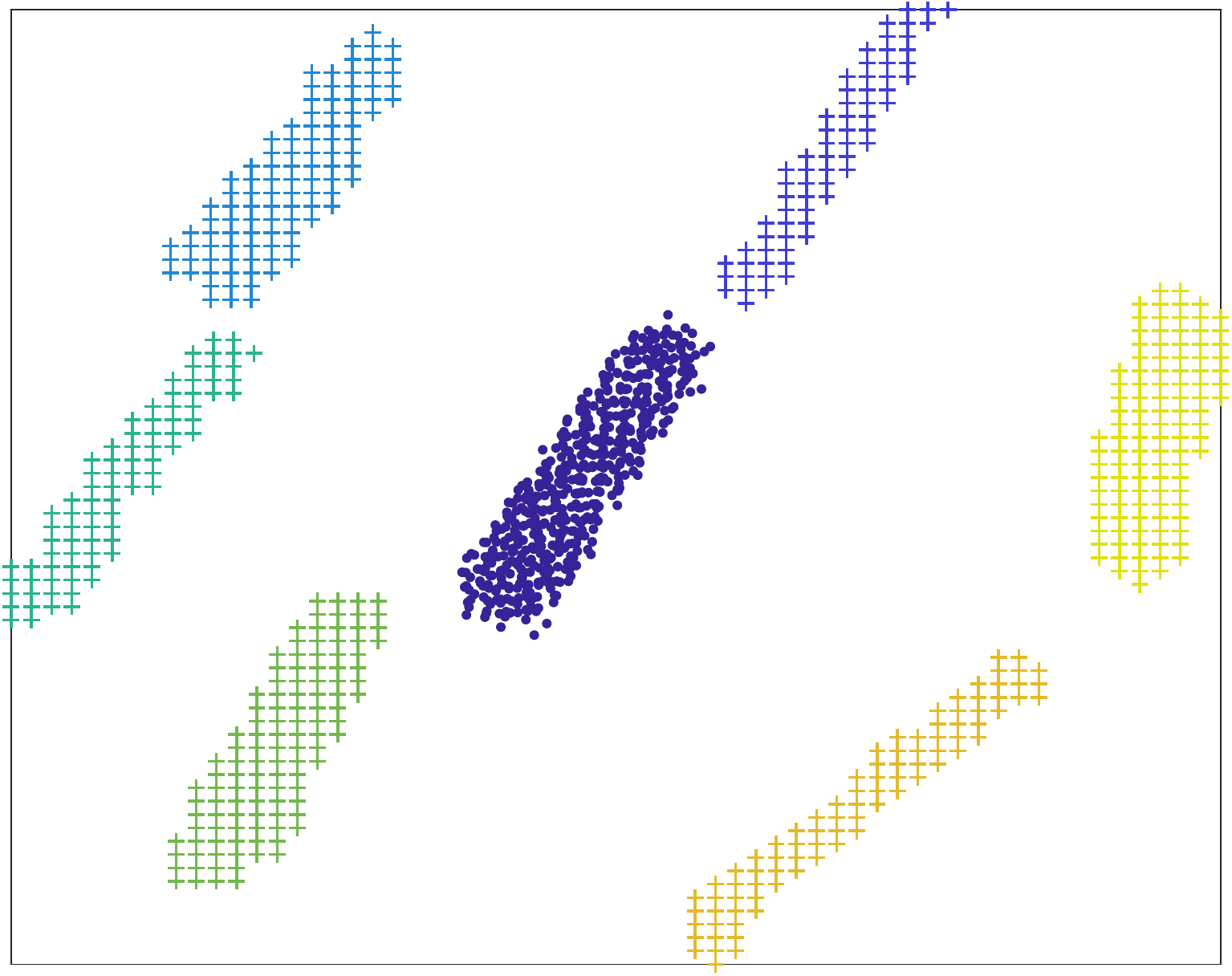} &
\includegraphics[width=0.1\textwidth]{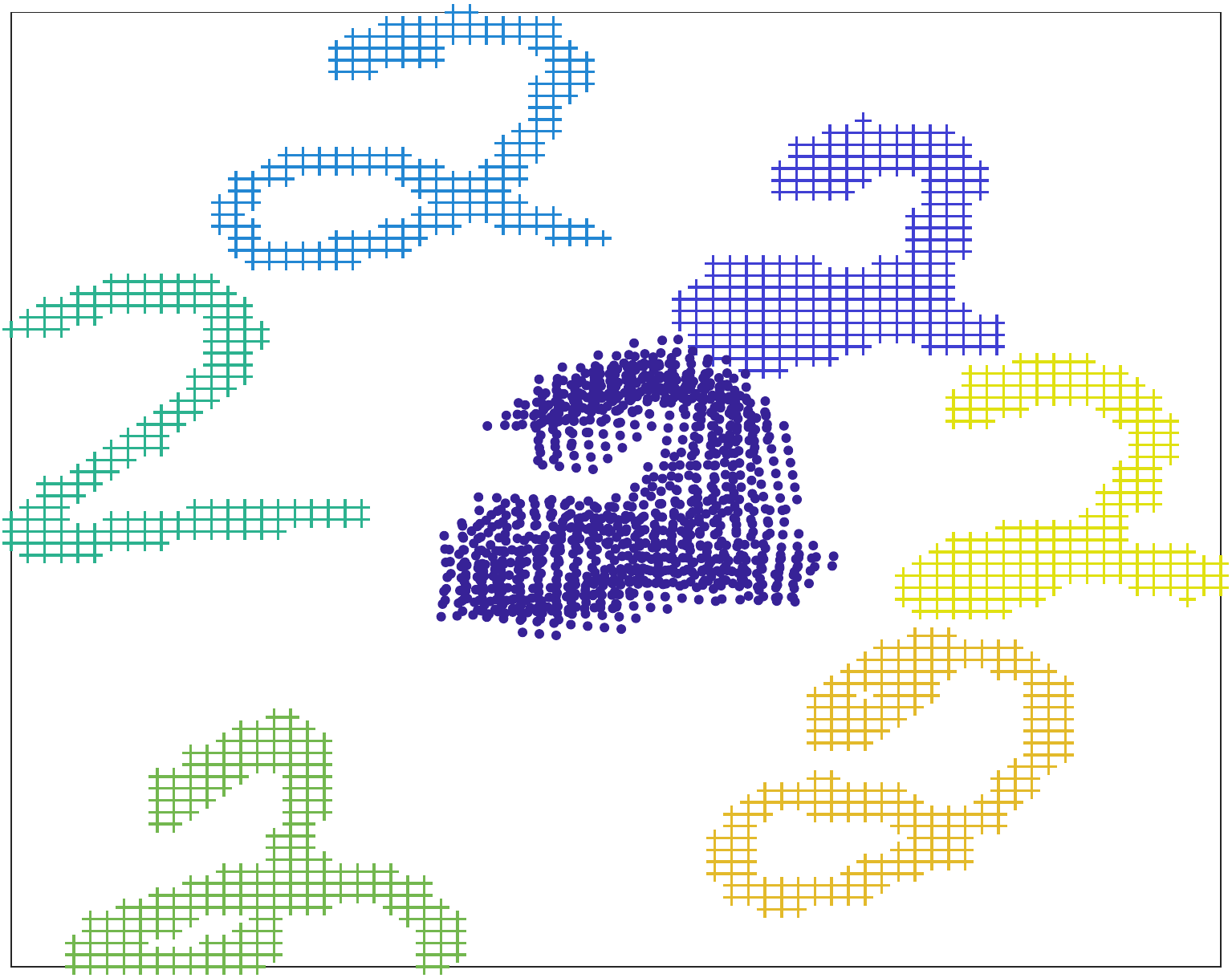} &
\includegraphics[width=0.1\textwidth]{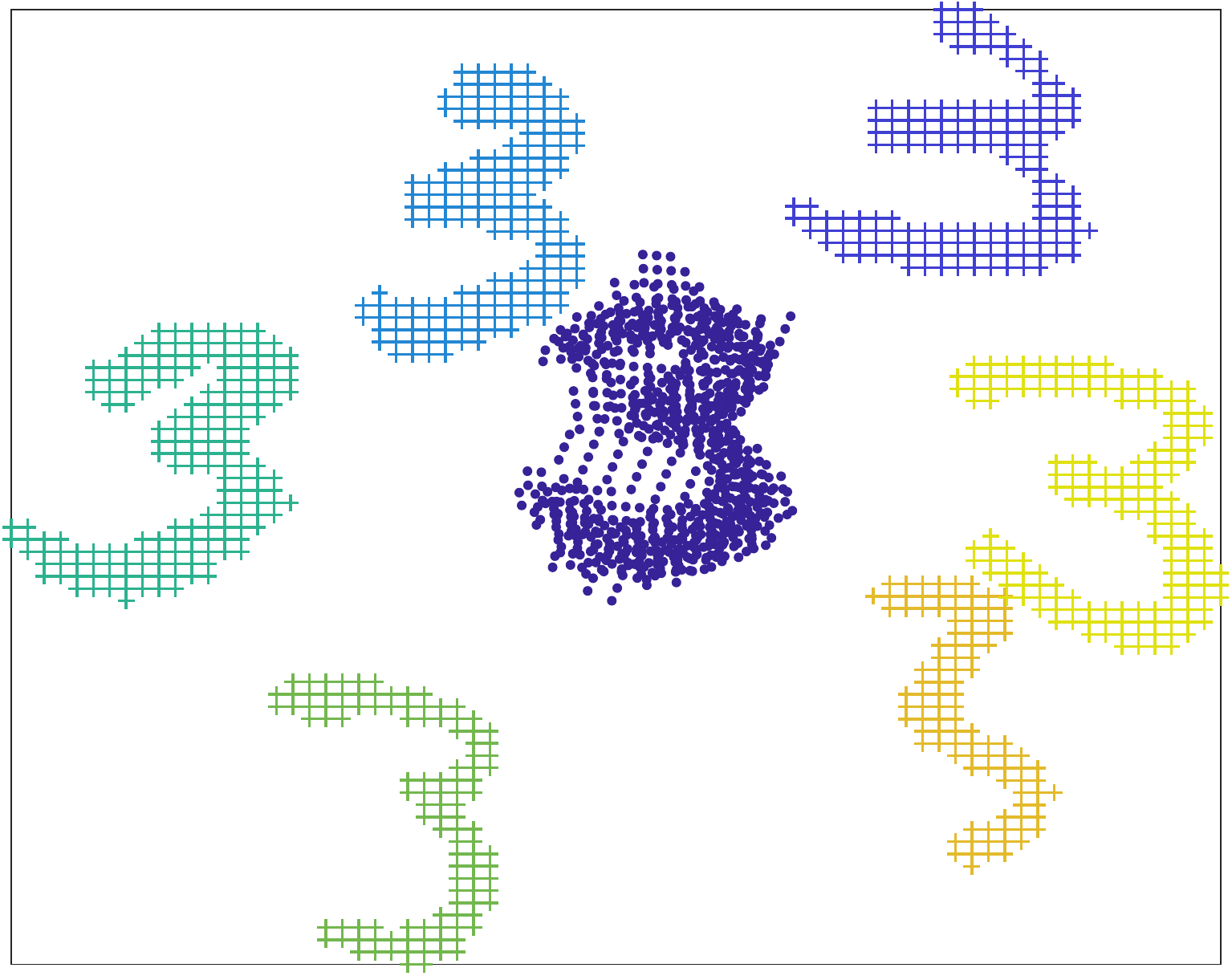} &
\includegraphics[width=0.1\textwidth]{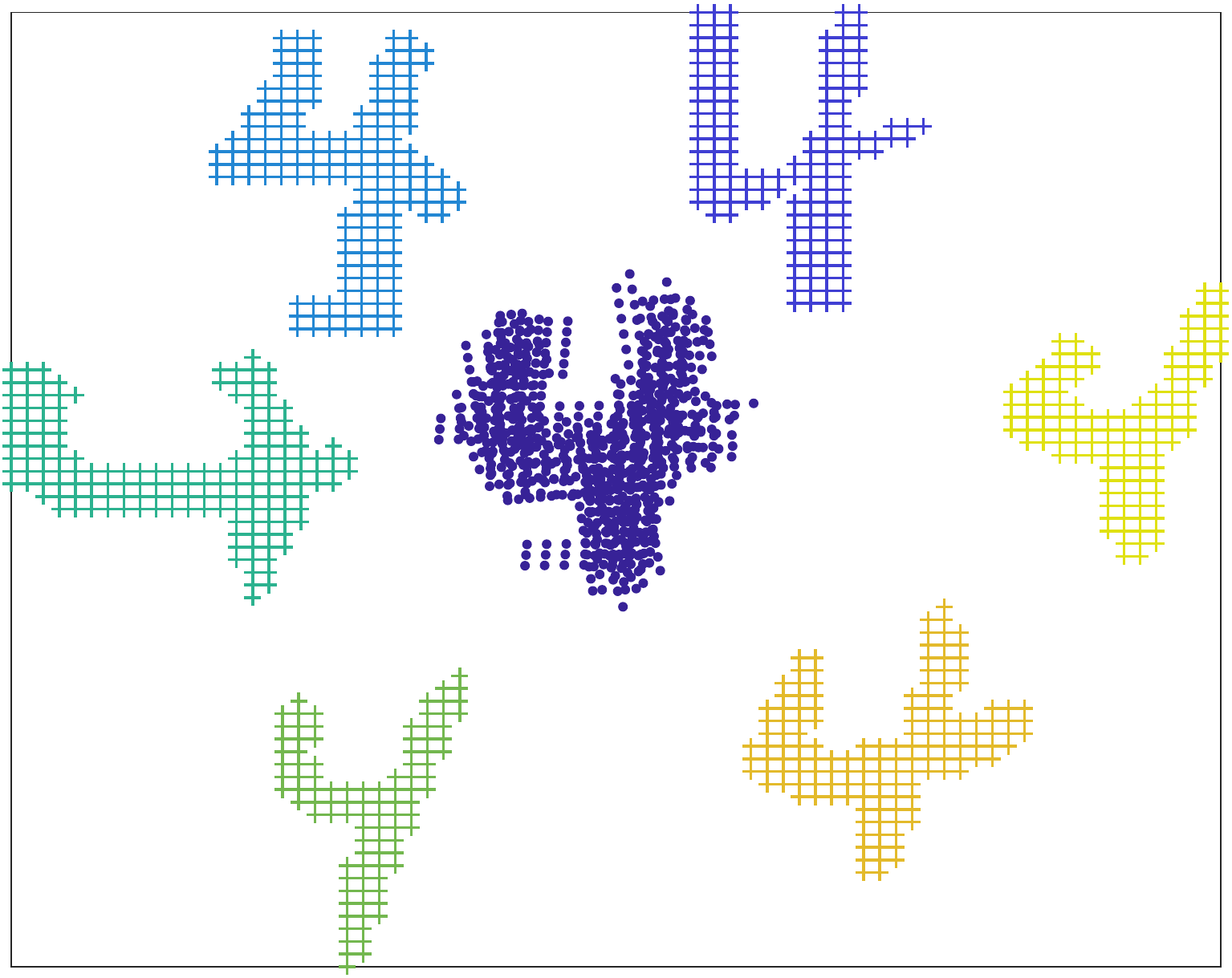} &
\includegraphics[width=0.1\textwidth]{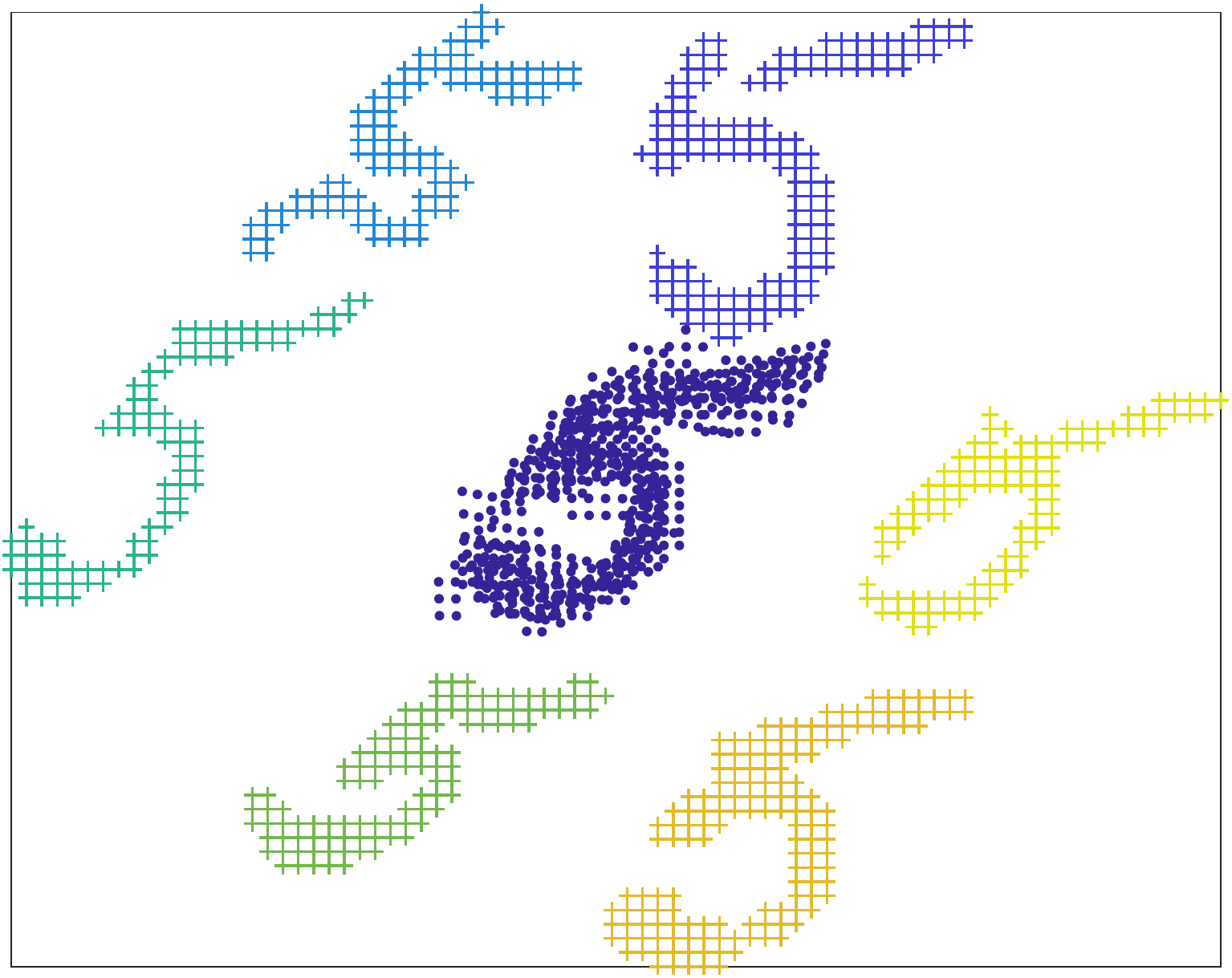} &
\includegraphics[width=0.1\textwidth]{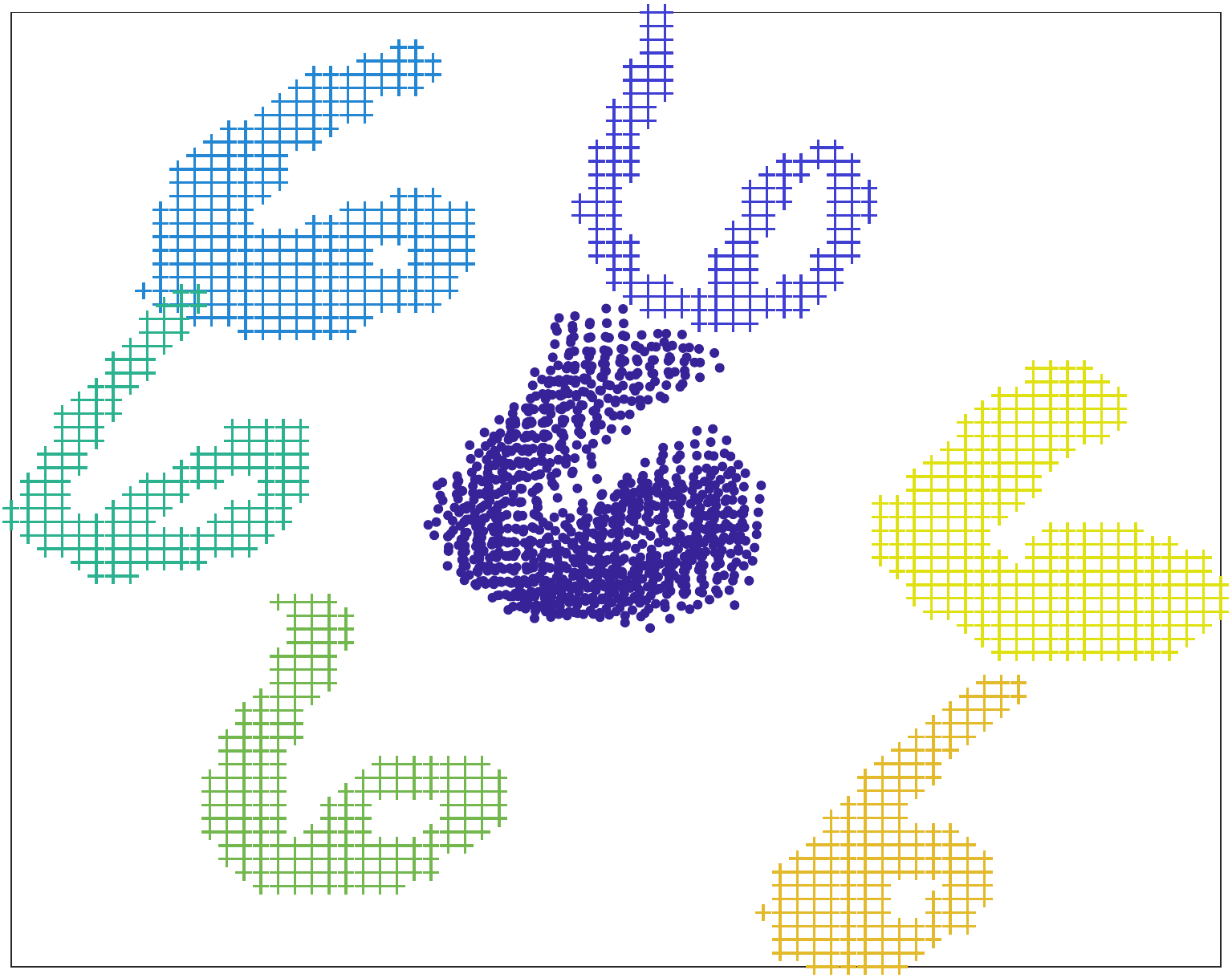} &
\includegraphics[width=0.1\textwidth]{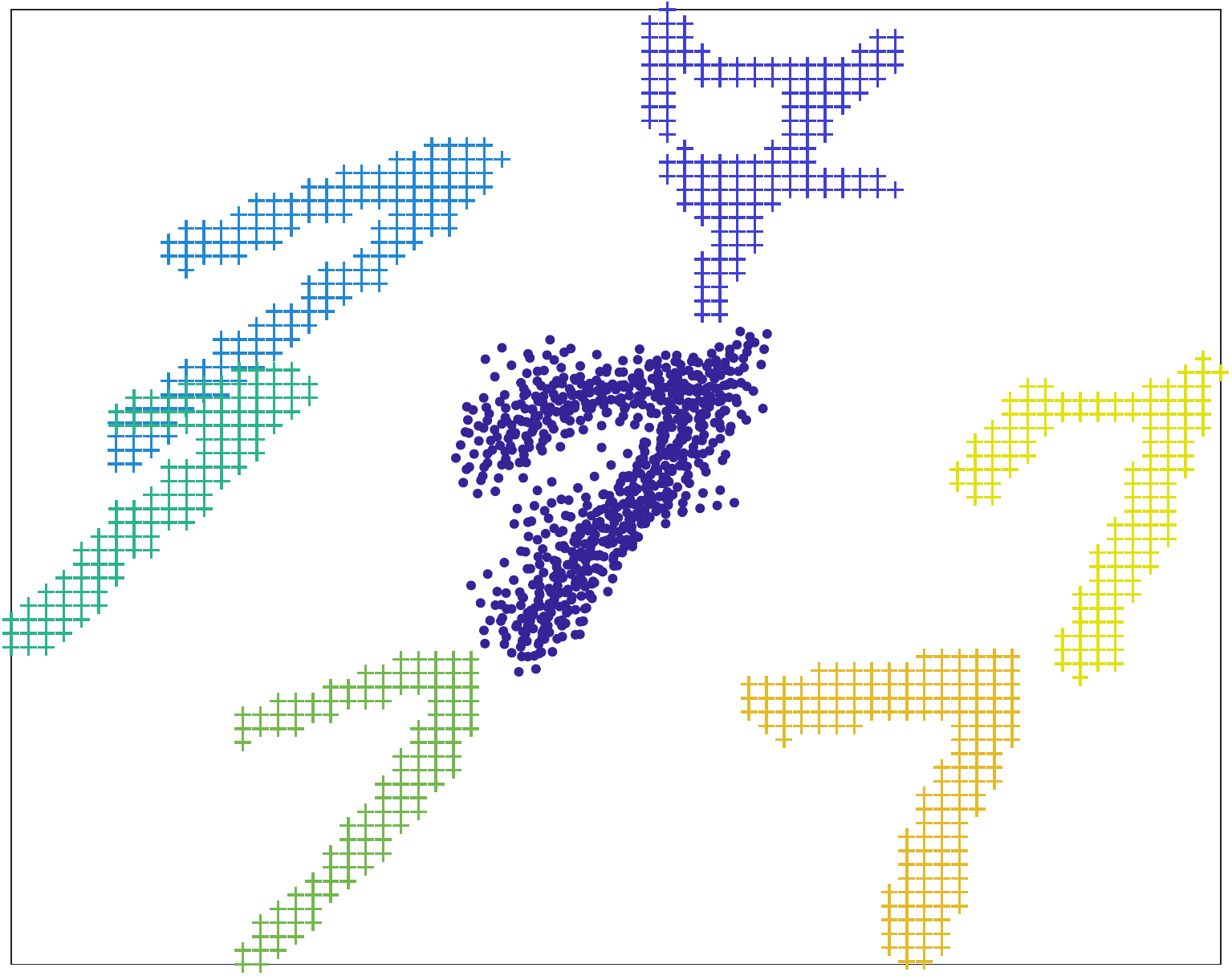} &
\includegraphics[width=0.1\textwidth]{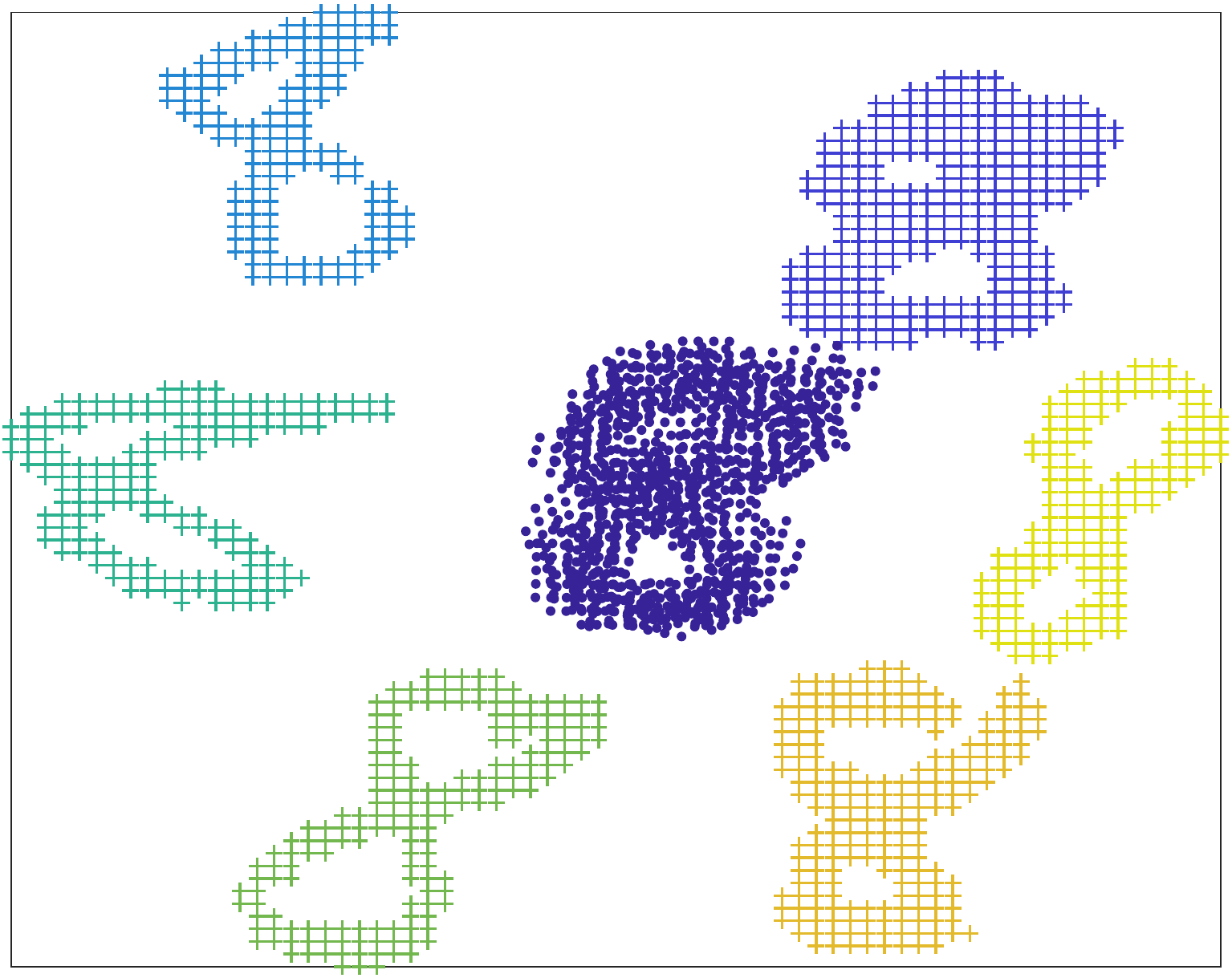} &
\includegraphics[width=0.1\textwidth]{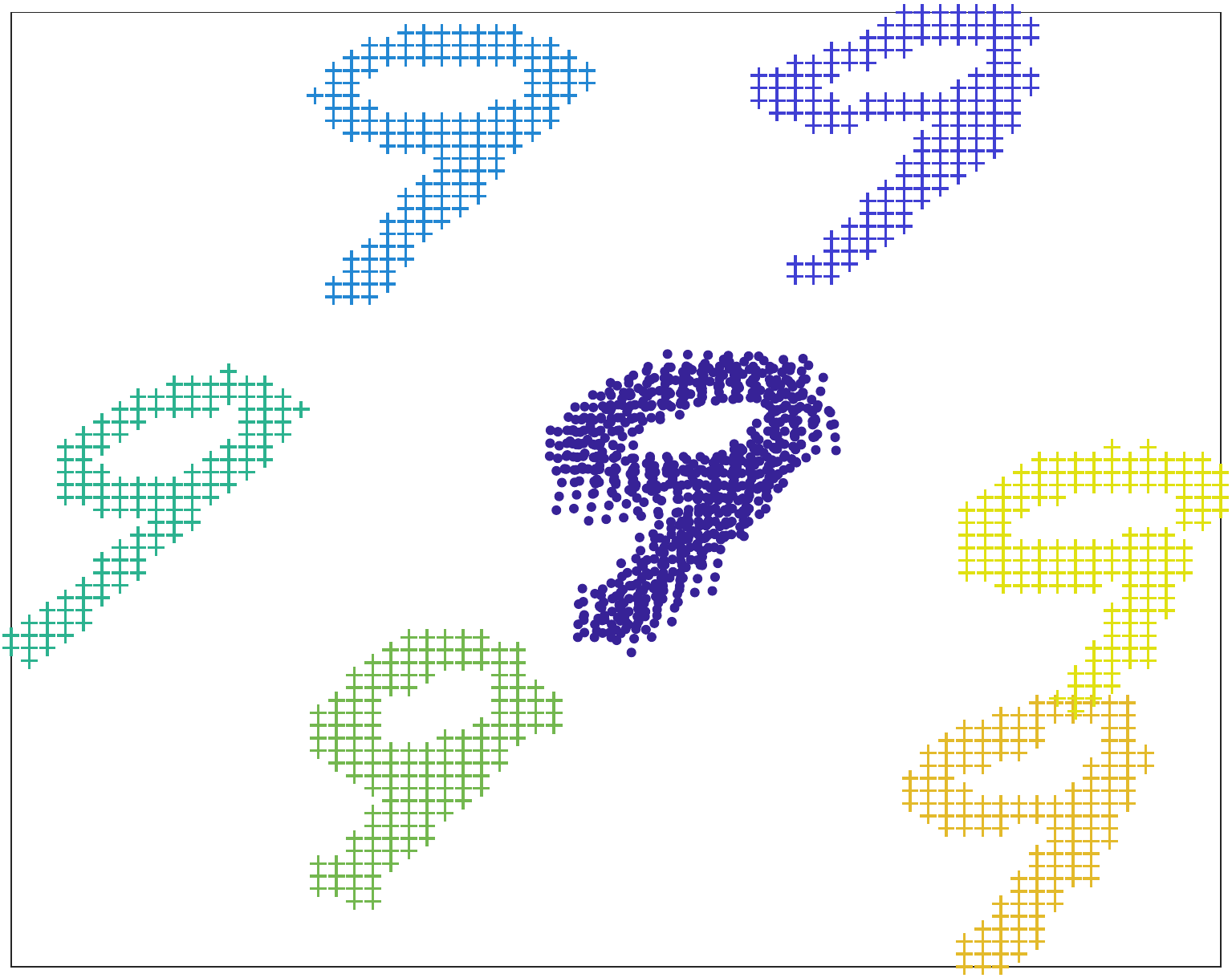}\\
\rotatebox{90}{}&
\includegraphics[width=0.1\textwidth]{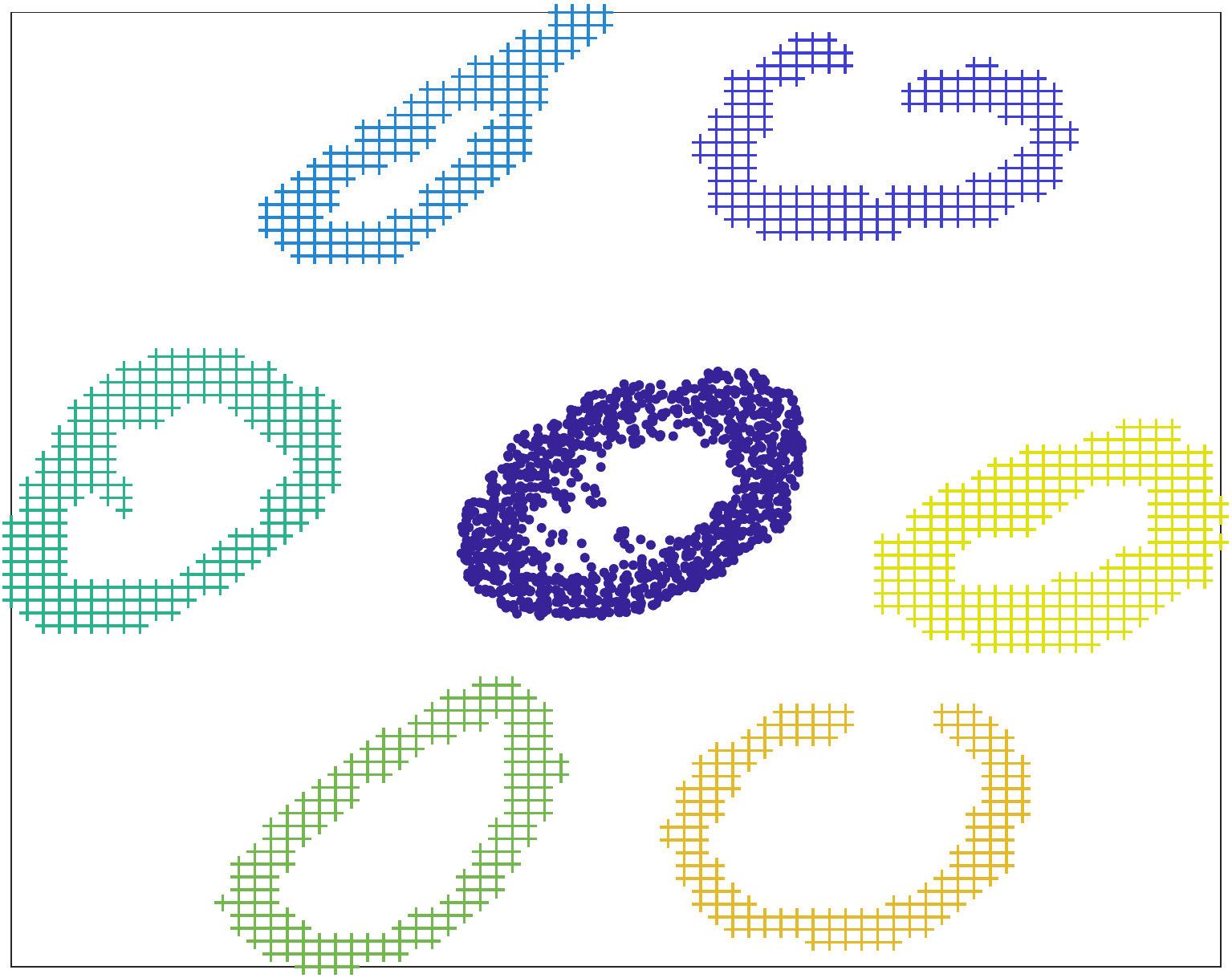} &
\includegraphics[width=0.1\textwidth]{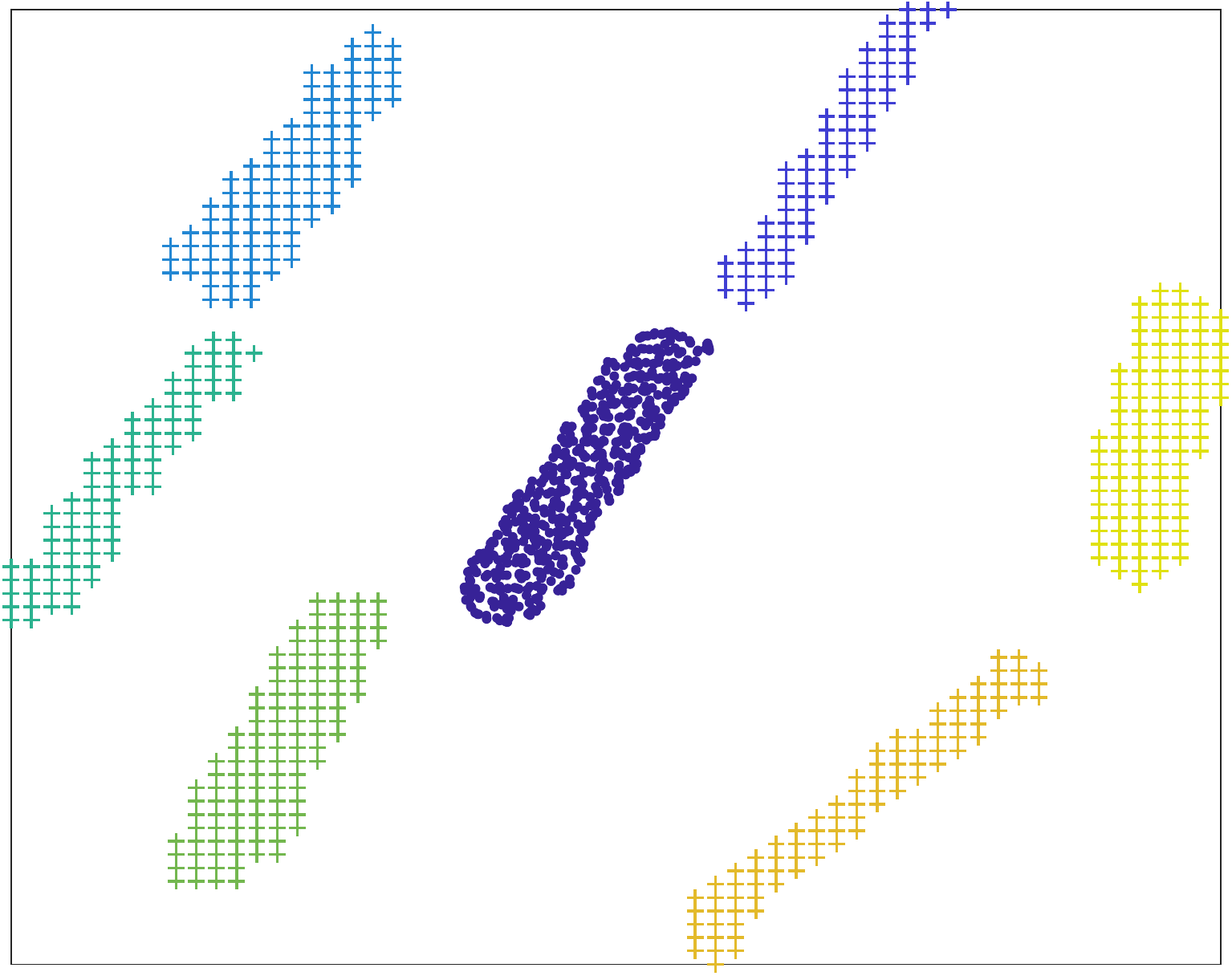} &
\includegraphics[width=0.1\textwidth]{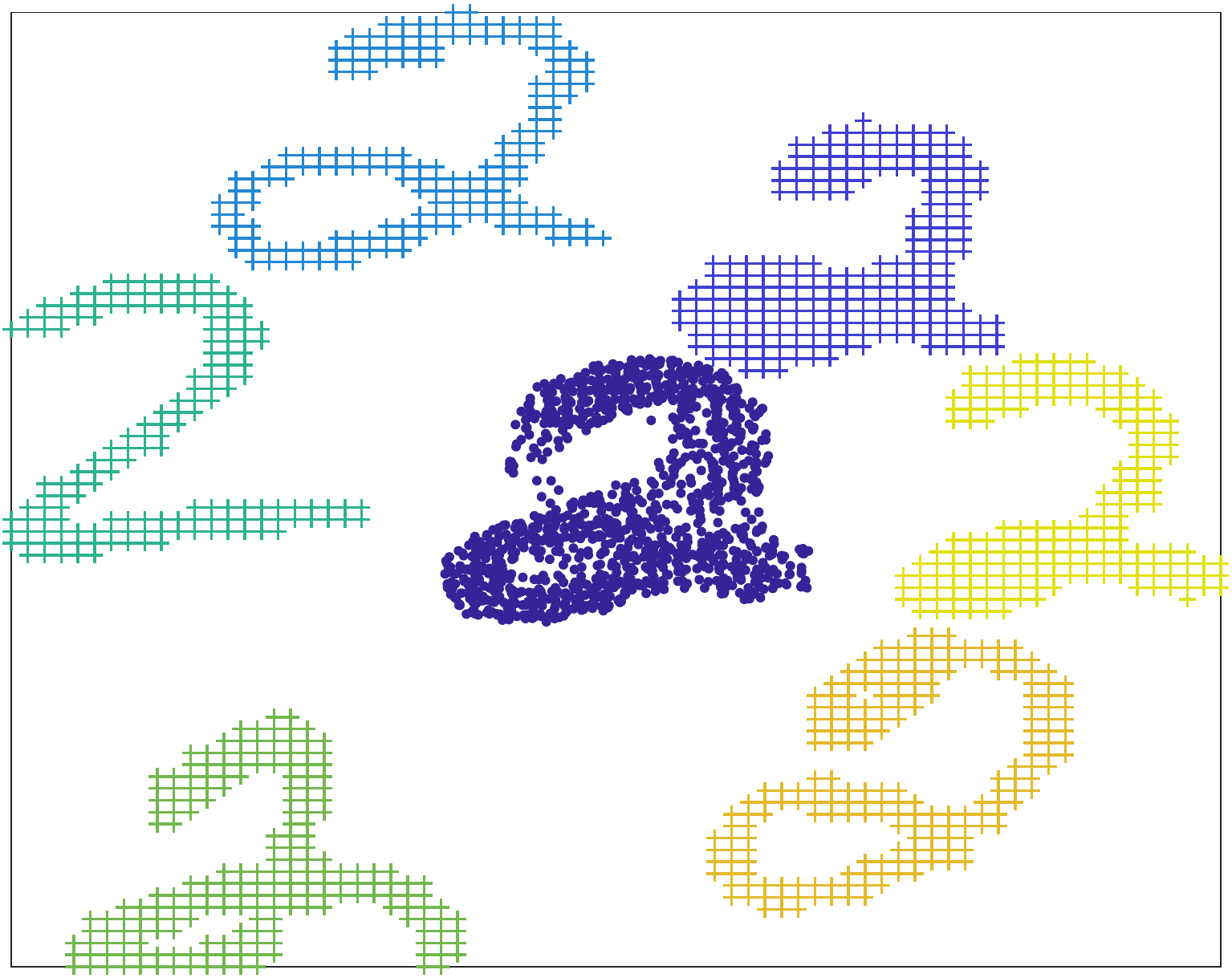} &
\includegraphics[width=0.1\textwidth]{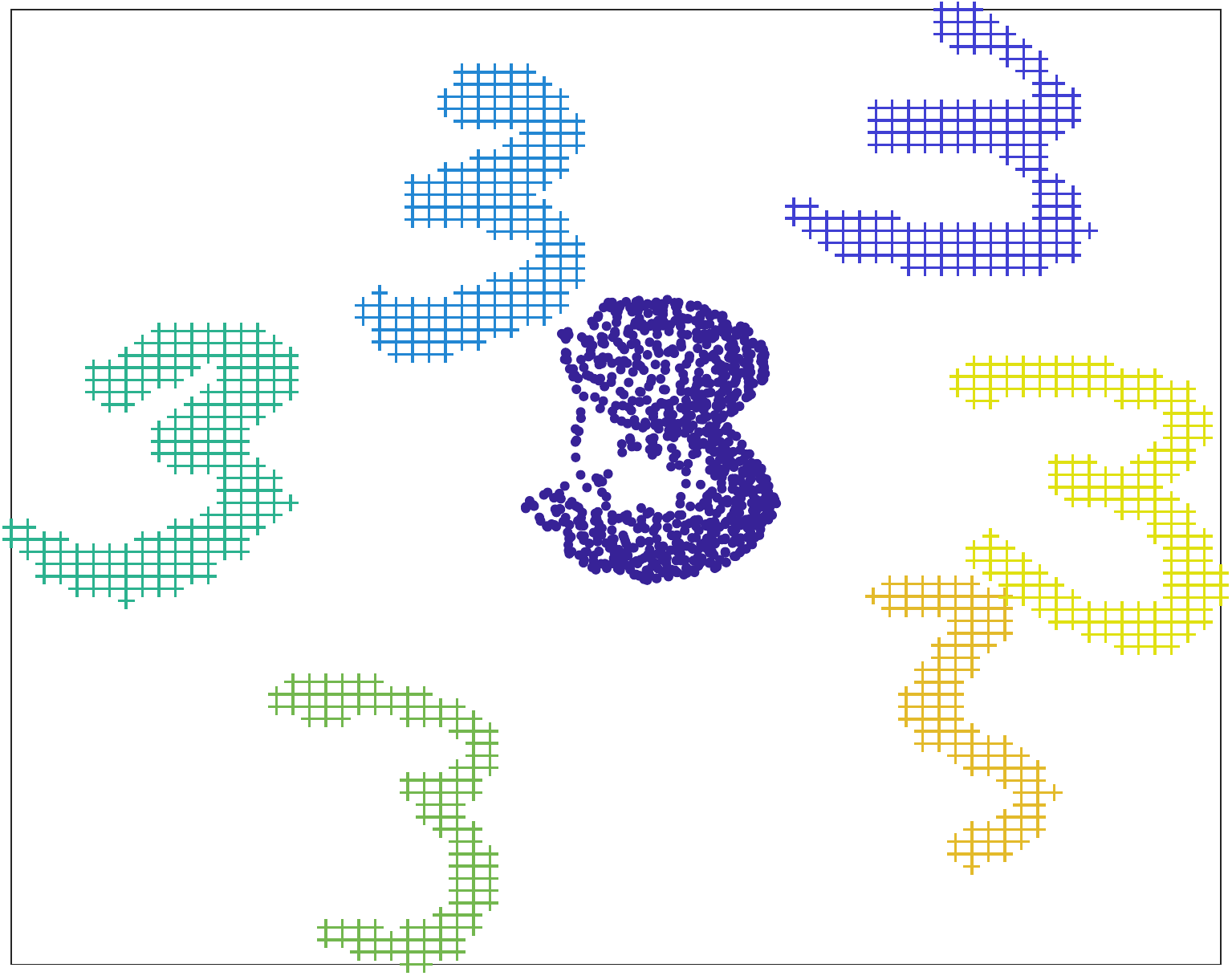} &
\includegraphics[width=0.1\textwidth]{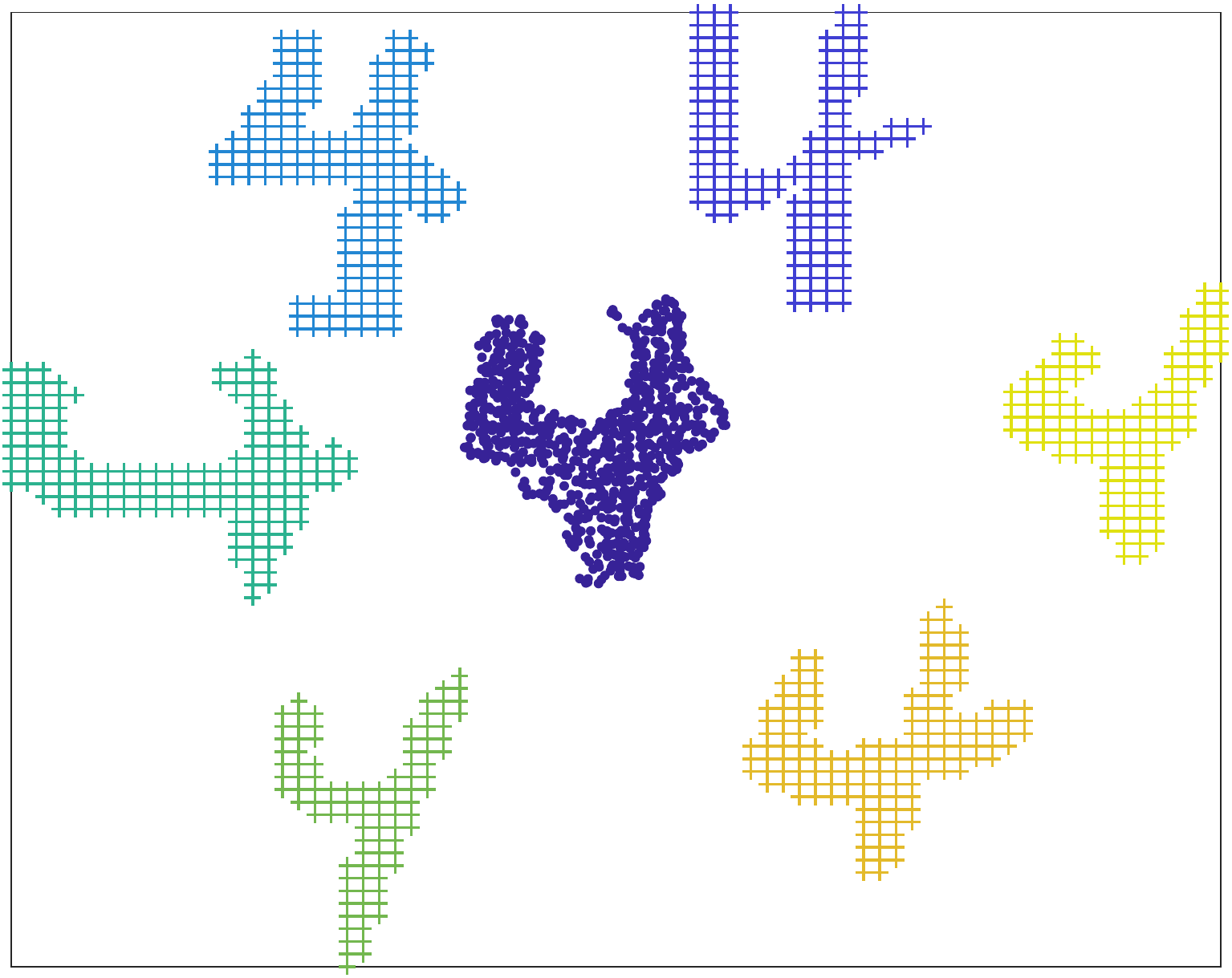} &
\includegraphics[width=0.1\textwidth]{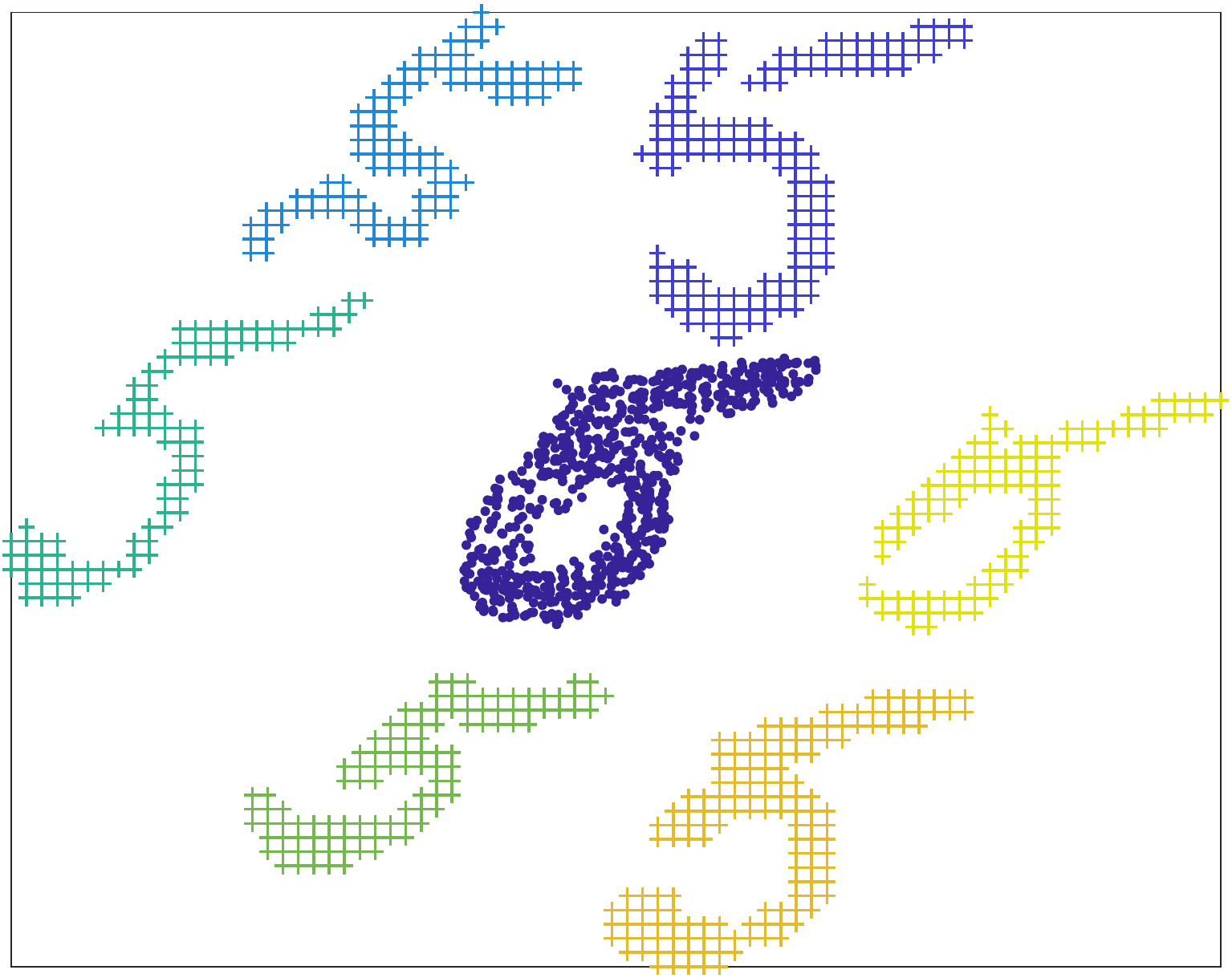} &
\includegraphics[width=0.1\textwidth]{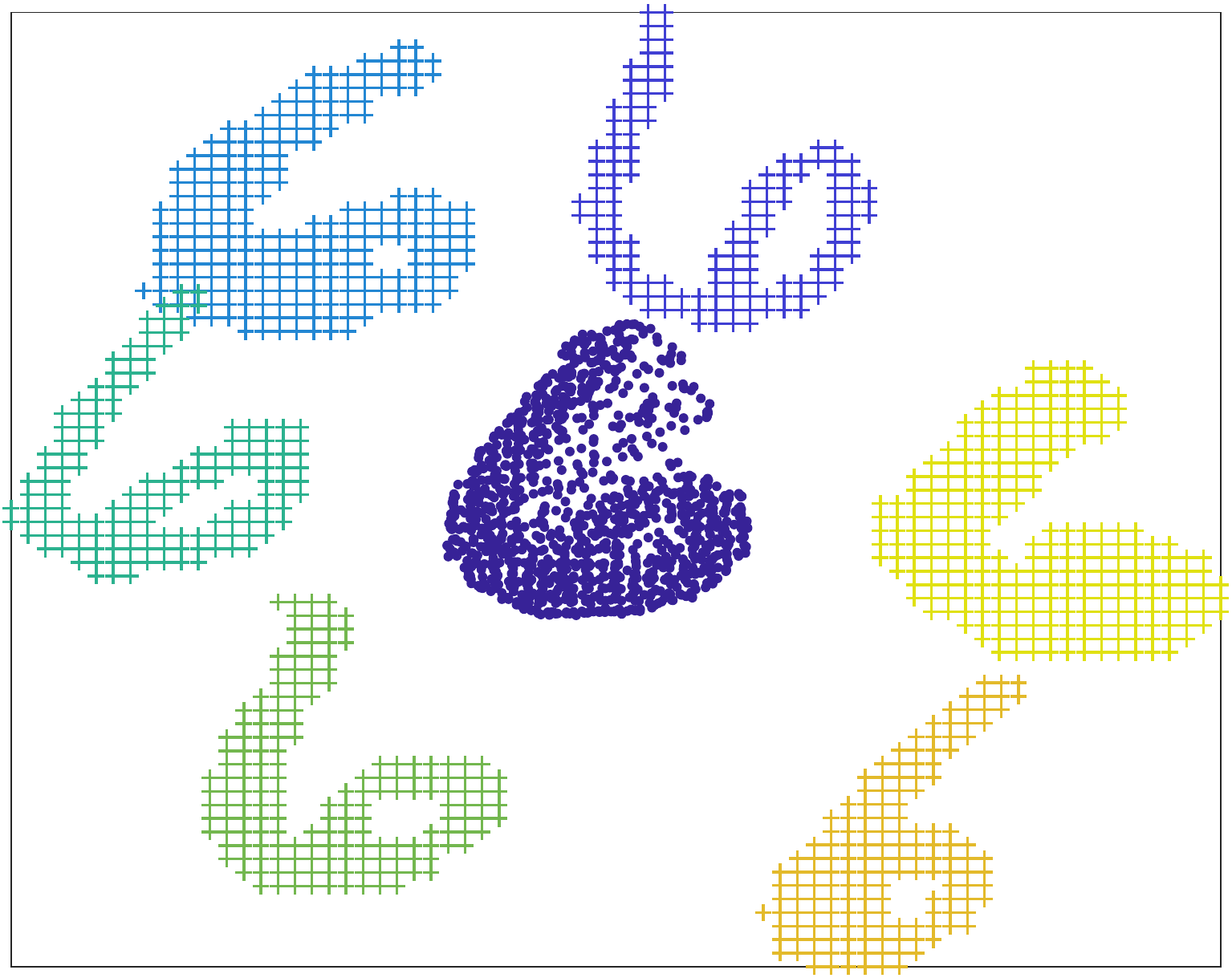} &
\includegraphics[width=0.1\textwidth]{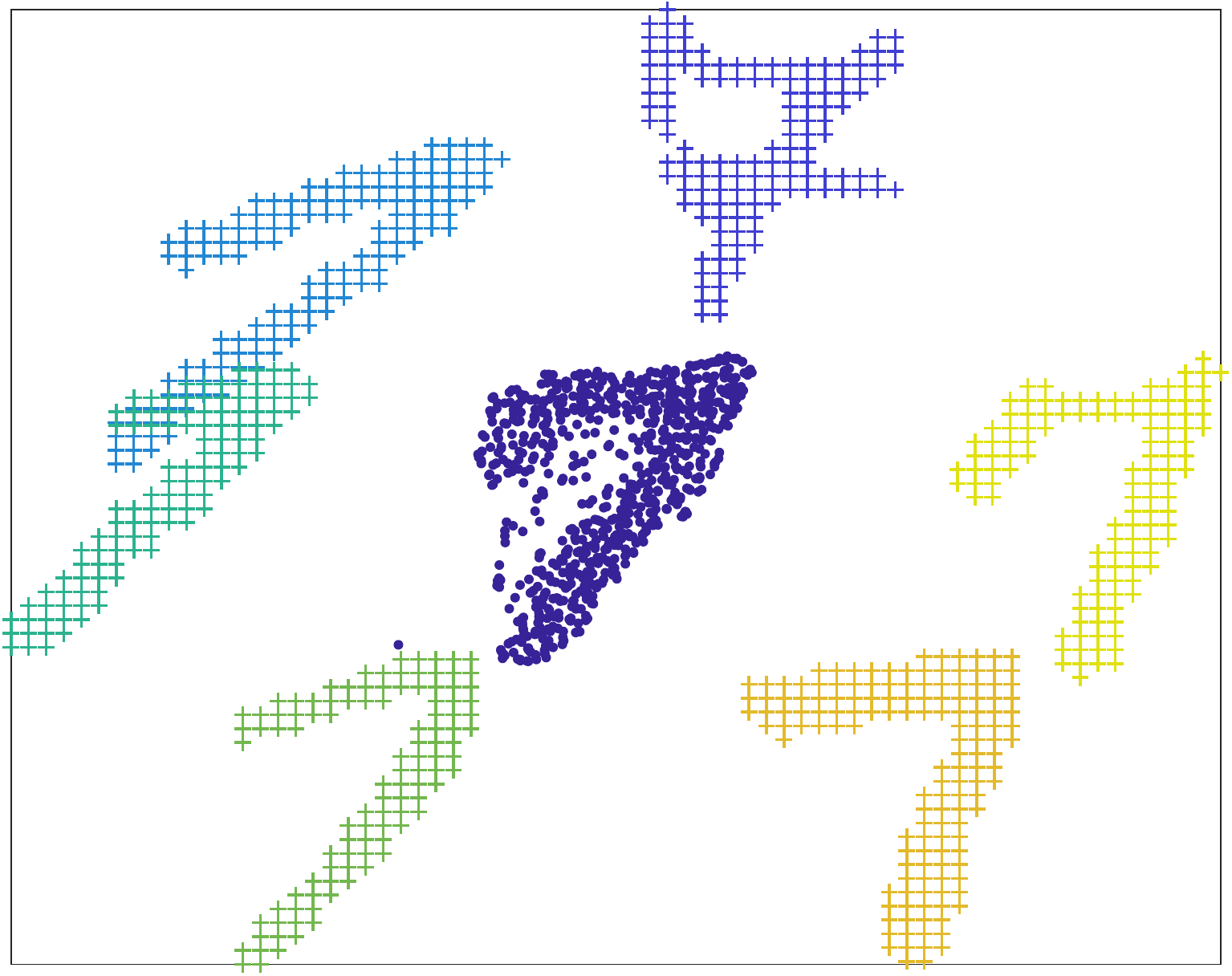} &
\includegraphics[width=0.1\textwidth]{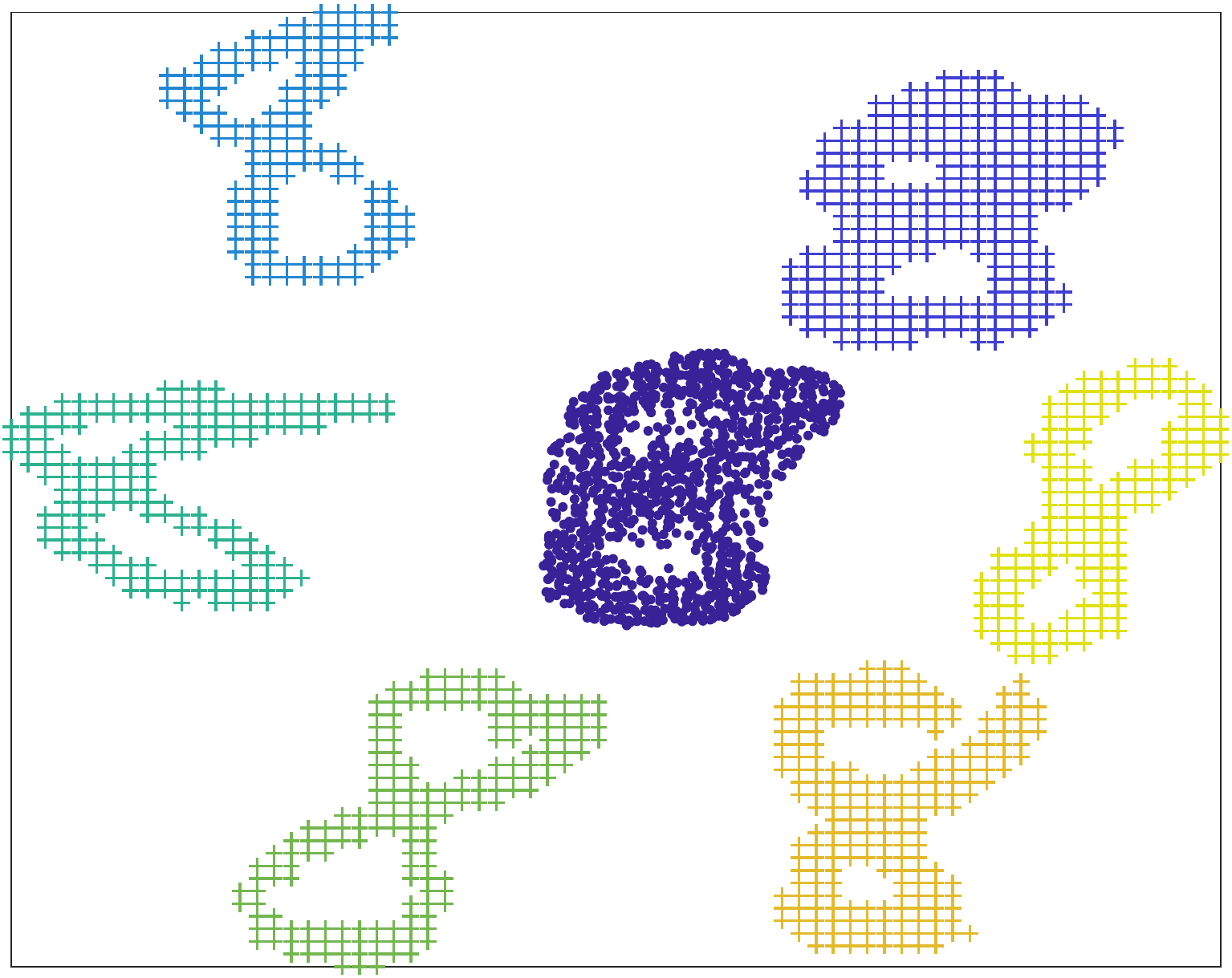} &
\includegraphics[width=0.1\textwidth]{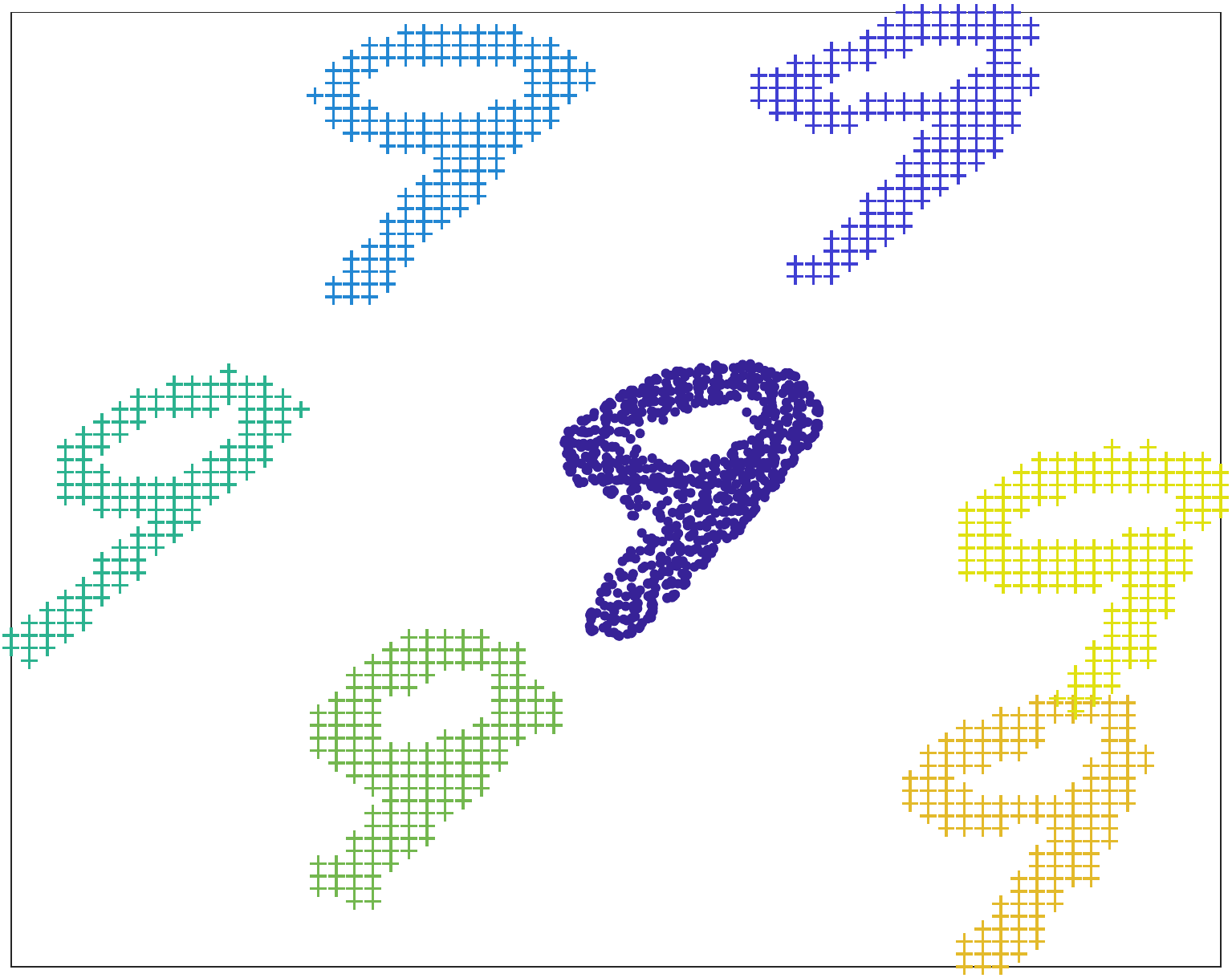}\\\hline

\end{tabular}
\caption{Barycenters of $6$ randomly chosen handwritten digits from MNIST dataset, solved numerically with different sets of test functions. The barycenter $y$ is displayed in deep purple in the center, surrounded by the source data $x$ in different colors indicating the index $z \in \{ 1, 2, 3, 4, 5, 6\}$. The first row contains the solution to Problem \ref{prob:factor} using as test functions only polynomial of first degree, the second row uses polynomials of second degree, and the third row has the solution to Problem 1, where the test function, slaved to the map, evolves through the kernel density estimator in (\ref{eq:Fzcat}).}
\label{table:MNIST}
\end{table}

\subsubsection{ Effect of the cost function }
Looking at Table 1, one may think at first that the barycenter has not been fully resolved, as its contours are not well defined. Figure \ref{fig::distort}, displaying the push forward of each of the six marginals to the barycenter, shows that this is not quite the case, as all push forward measures agree, except when only the preconditioner is used, forcing each of the six  marginals to keep its original shape. Small differences are due to the fact that each marginal contains a different number of sample points. 

The fact that the digit six in Figure \ref{fig::distort} (b) looks somewhat cloudy should not come as a surprize, since nothing in the objective function enforces the notion that the maps to the barycenter should not smear the original digits. 
A way to address this is to adopt a distortion-sensitive cost function in (\ref{eq:DefDBary}), namely
\begin{equation}
C(y(x,k),\rho) =\frac{1}{N^2} \sum_{1\leq i \neq j\leq N}  \left[ \left(   \frac{  || y_i^k - y_j^k ||^2   }{ || x_i^k - x_j^k ||^2 + \epsilon^2  } - 1 \right)^2 \right] +  \omega \frac{1}{N} \sum_{i=1}^N || y_i^k - x_i  ^k||^2. 
\label{eq::distort}
\end{equation}
The first term penalizes the deviation of the map from a conditional isometry, which would have equal pairwise distances in $x$ and $y=T(x, z)$ space for each value of $z$ ($k$ in our discrete setting), with a small parameter $\epsilon$ to prevent division by zero. The second term is a remnant of a regular optimal transport cost, intended to anchor the barycenter in space, with small weight  $\omega = 0.01$ in our numerical example. 
We compare the results obtained with the new cost and with the $L^2$ distance. The barycenters are displayed in Table \ref{table::MNIST2}, and the mapped samples from the digit six with different $z$ values are shown in Figure \ref{fig::distort}. Clearly the adoption of the cost in (\ref{eq::distort}) results
in a barycenter with more defined contours, as the need to preserve pairwise distances prevents the points in the upper branch of the digit six to broaden up when mapped to the barycenter.
\begin{table}[h!]
\centering
\setlength{\tabcolsep}{0.5pt}
\begin{tabular}{ccccccccccc}\\ \hline
& 0 & 1 & 2 & 3 & 4 & 5 & 6 & 7 & 8 & 9 \\ 
\rotatebox{90}{}&
\includegraphics[width=0.1\textwidth]{Figures/PDF_files/Digits/KDE_L2/digit0.pdf} &
\includegraphics[width=0.1\textwidth]{Figures/PDF_files/Digits/KDE_L2/digit1.pdf} &
\includegraphics[width=0.1\textwidth]{Figures/PDF_files/Digits/KDE_L2/digit2.pdf} &
\includegraphics[width=0.1\textwidth]{Figures/PDF_files/Digits/KDE_L2/digit3.pdf} &
\includegraphics[width=0.1\textwidth]{Figures/PDF_files/Digits/KDE_L2/digit4.pdf} &
\includegraphics[width=0.1\textwidth]{Figures/PDF_files/Digits/KDE_L2/digit5.pdf} &
\includegraphics[width=0.1\textwidth]{Figures/PDF_files/Digits/KDE_L2/digit6.pdf} &
\includegraphics[width=0.1\textwidth]{Figures/PDF_files/Digits/KDE_L2/digit7.pdf} &
\includegraphics[width=0.1\textwidth]{Figures/PDF_files/Digits/KDE_L2/digit8.pdf} &
\includegraphics[width=0.1\textwidth]{Figures/PDF_files/Digits/KDE_L2/digit9.pdf}\\\hline
\rotatebox{90}{}&
\includegraphics[width=0.1\textwidth]{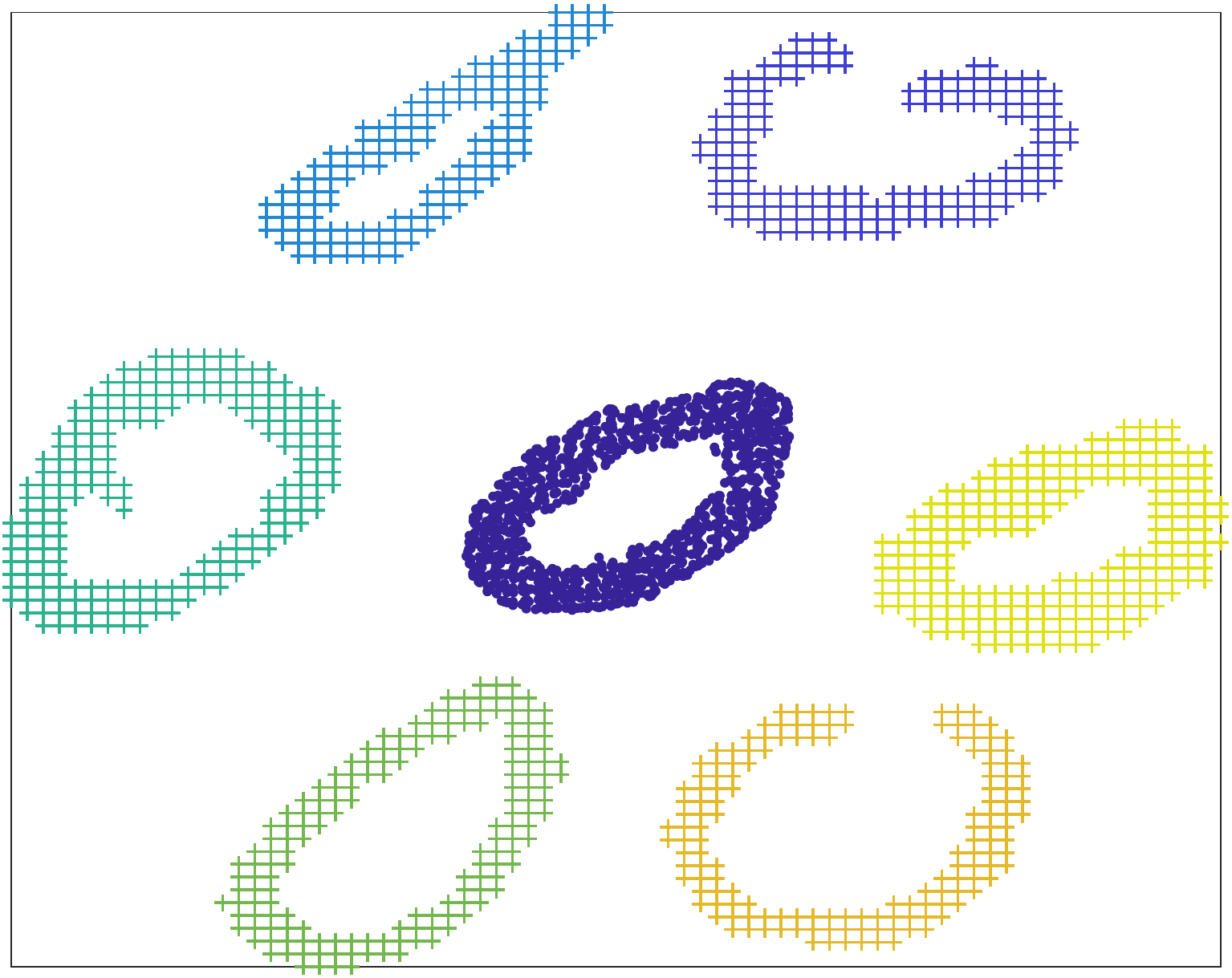} &
\includegraphics[width=0.1\textwidth]{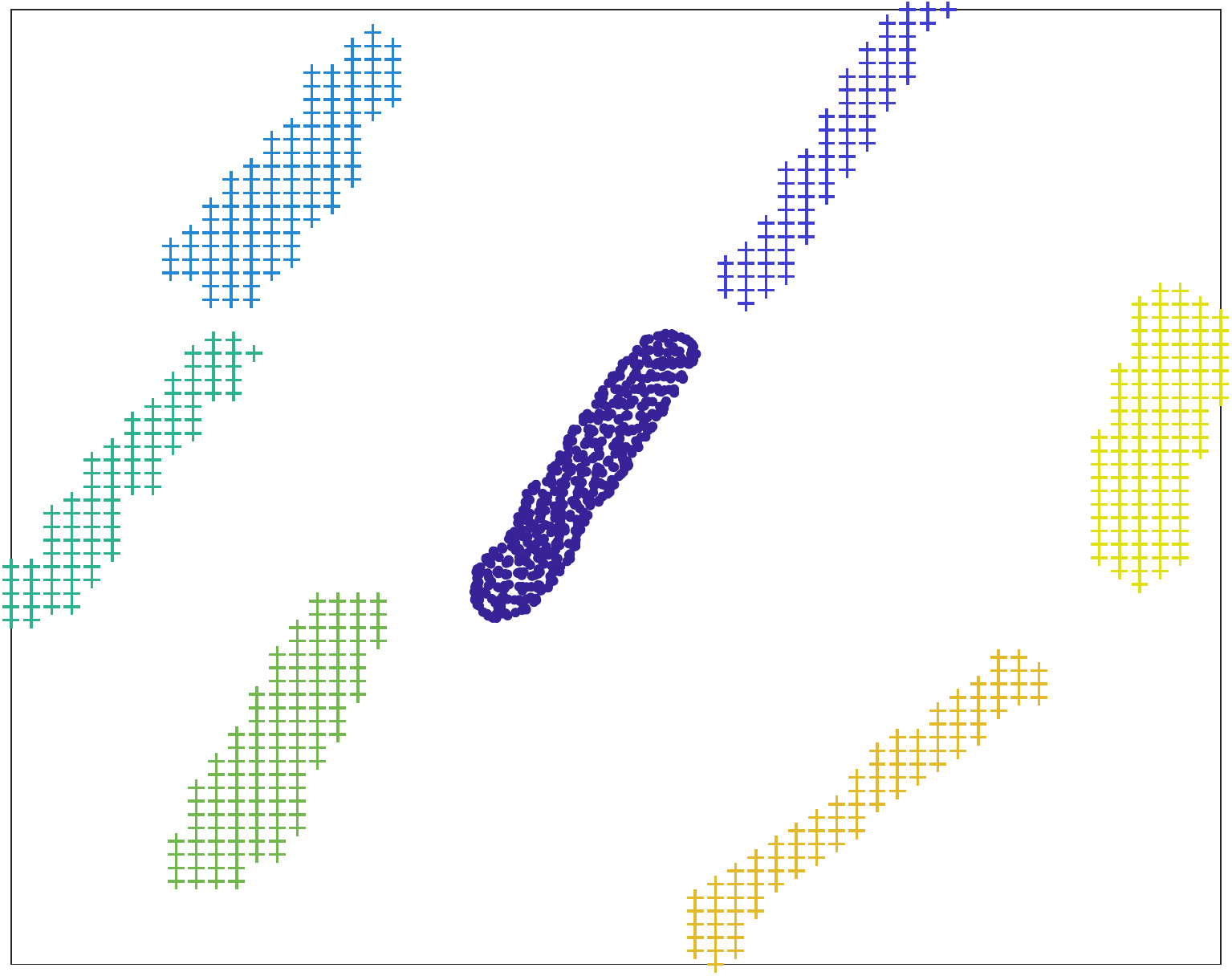} &
\includegraphics[width=0.1\textwidth]{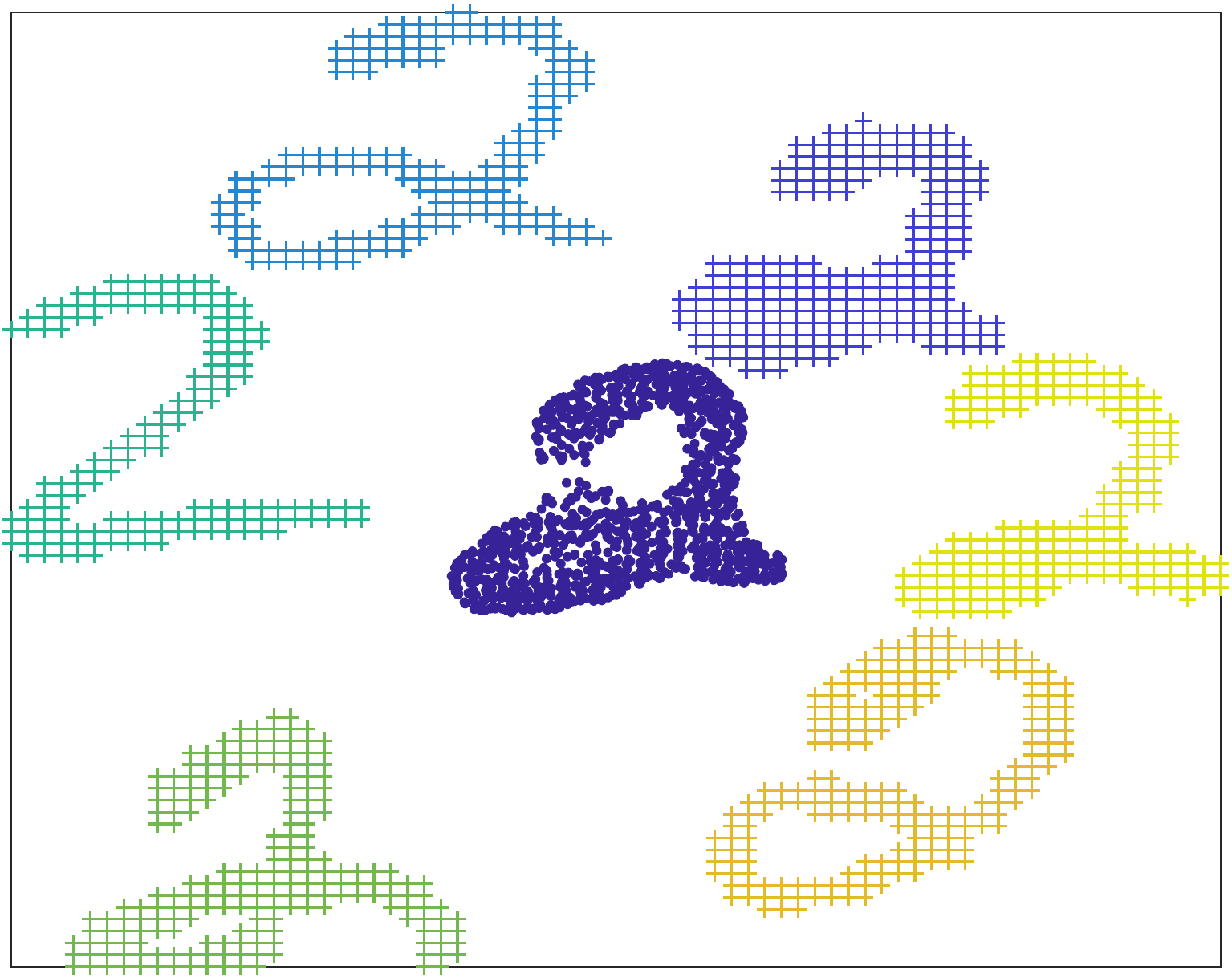} &
\includegraphics[width=0.1\textwidth]{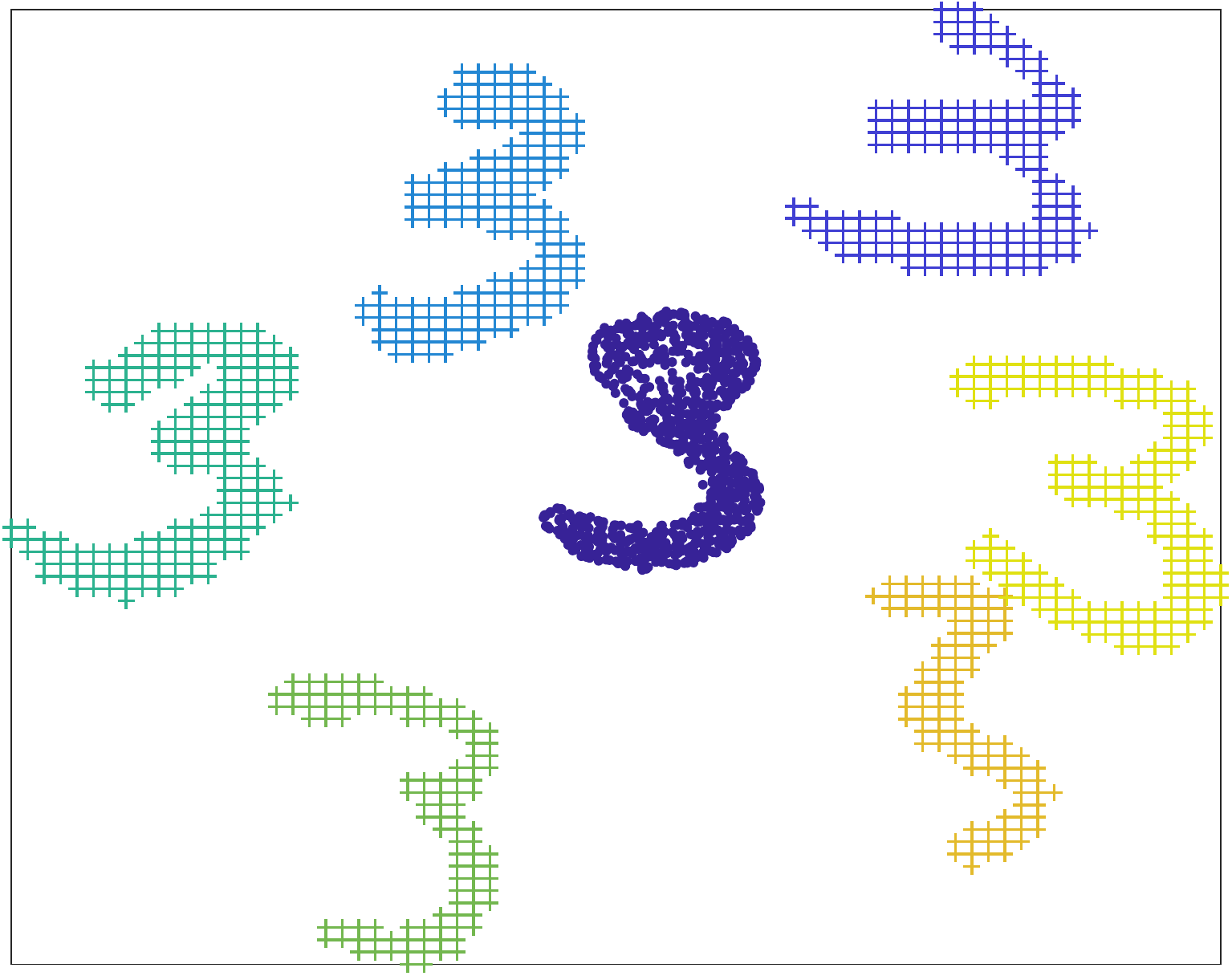} &
\includegraphics[width=0.1\textwidth]{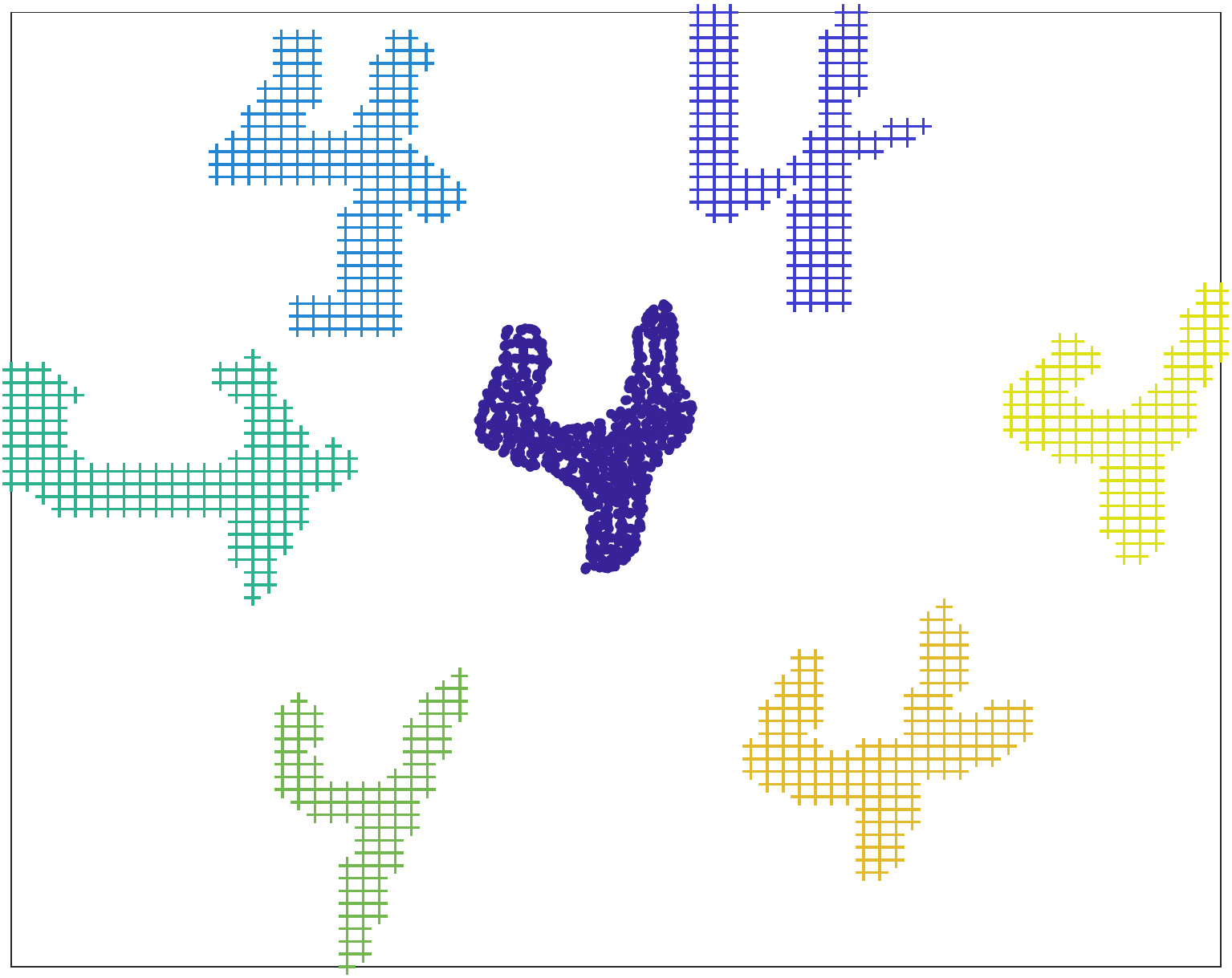} &
\includegraphics[width=0.1\textwidth]{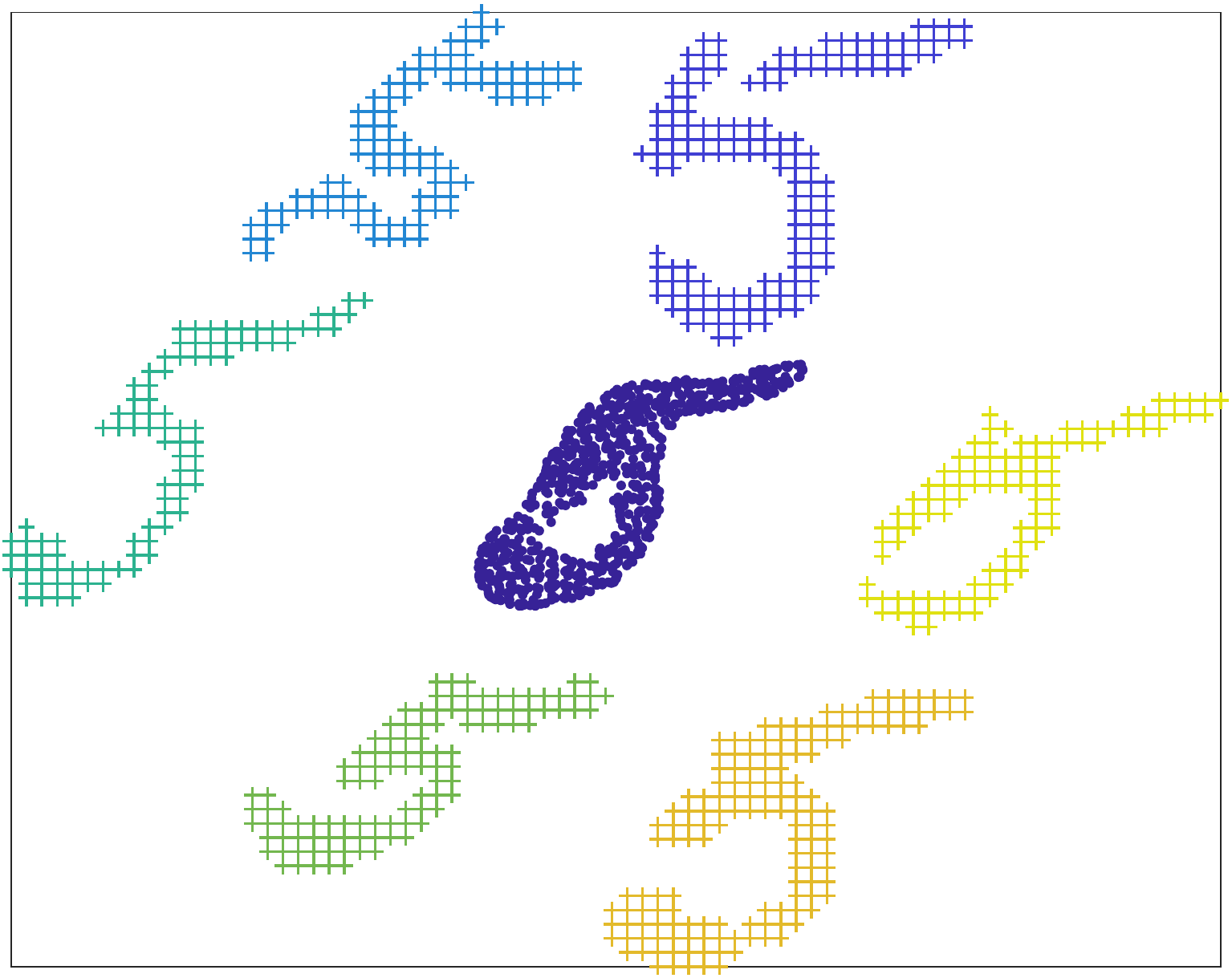} &
\includegraphics[width=0.1\textwidth]{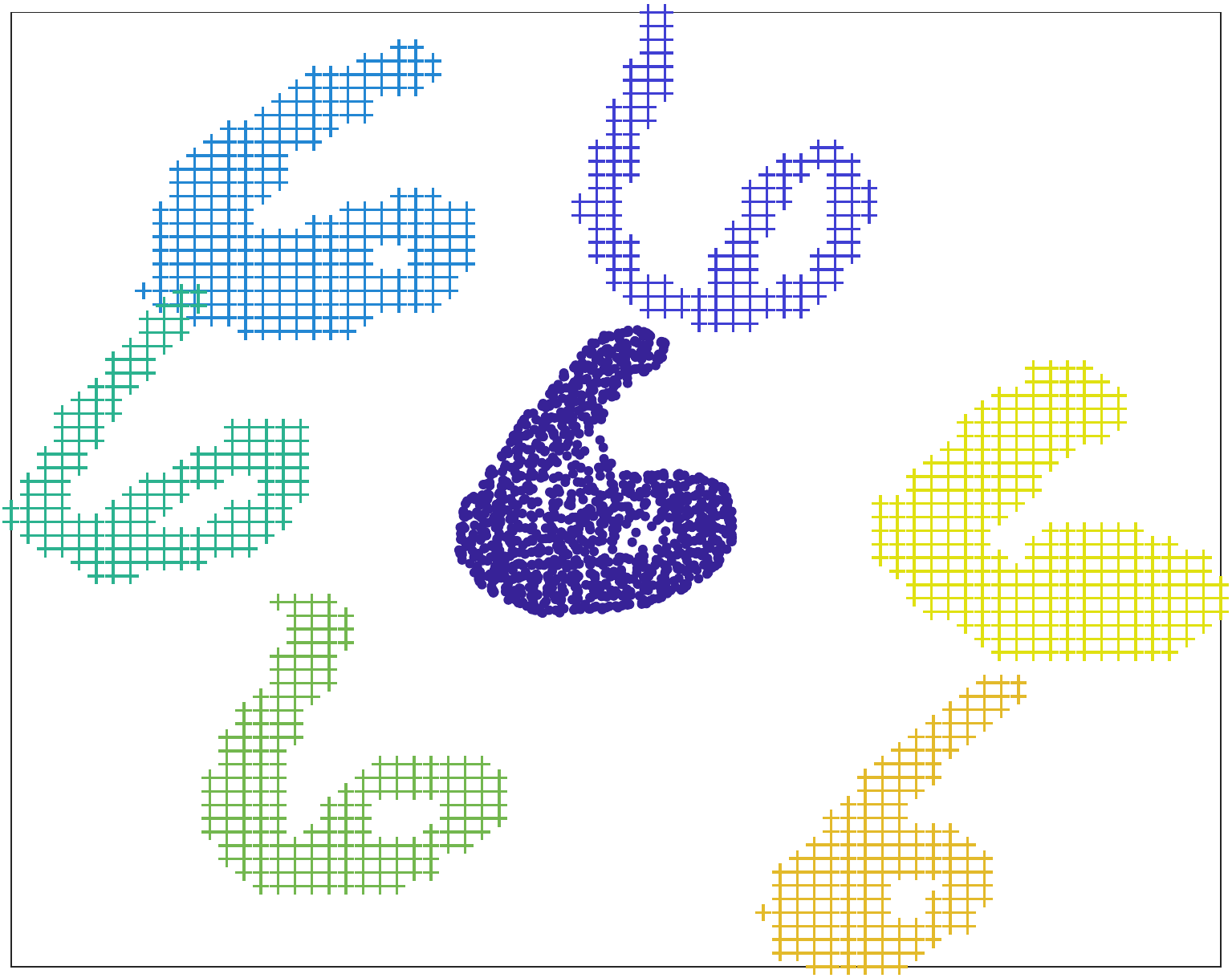} &
\includegraphics[width=0.1\textwidth]{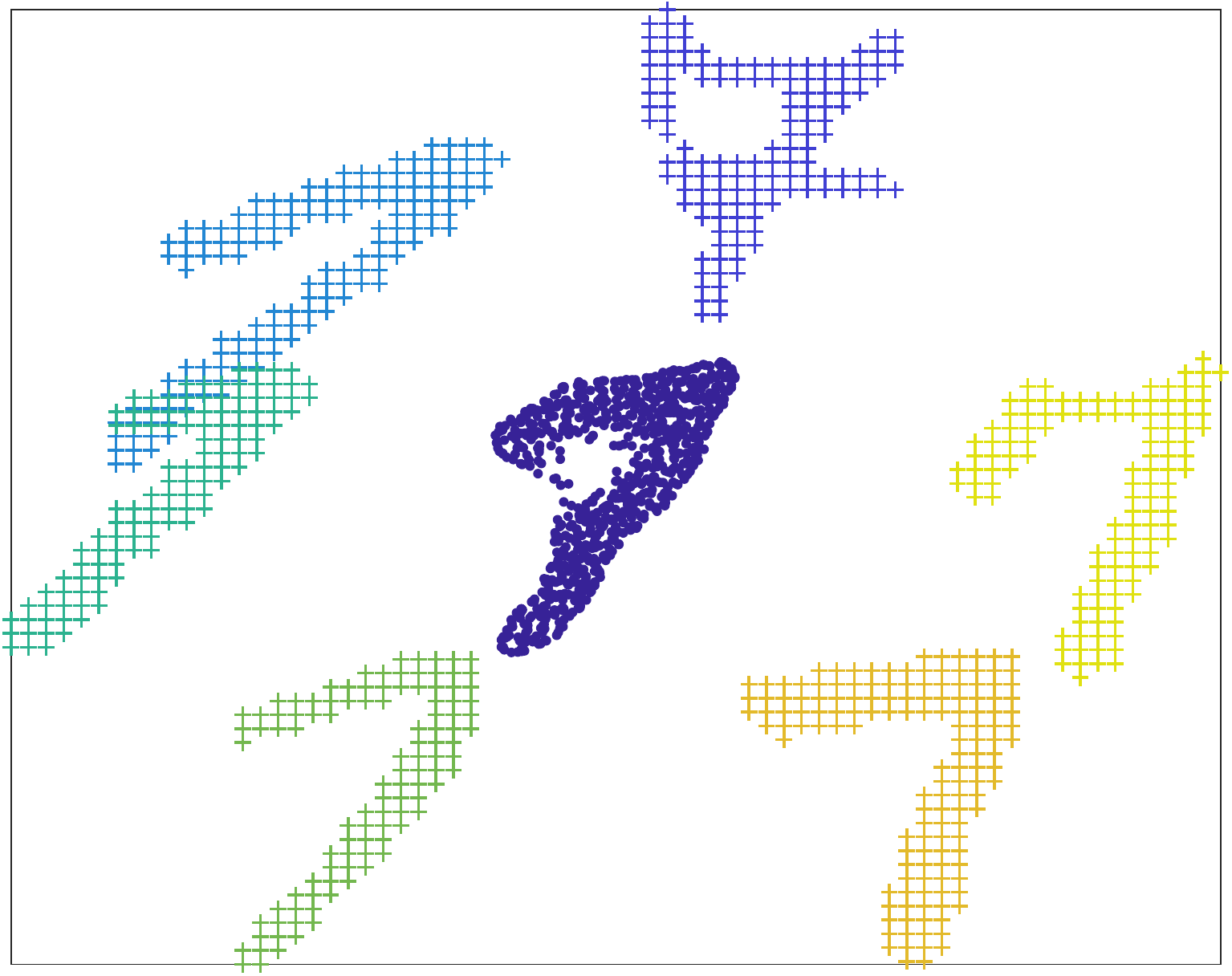} &
\includegraphics[width=0.1\textwidth]{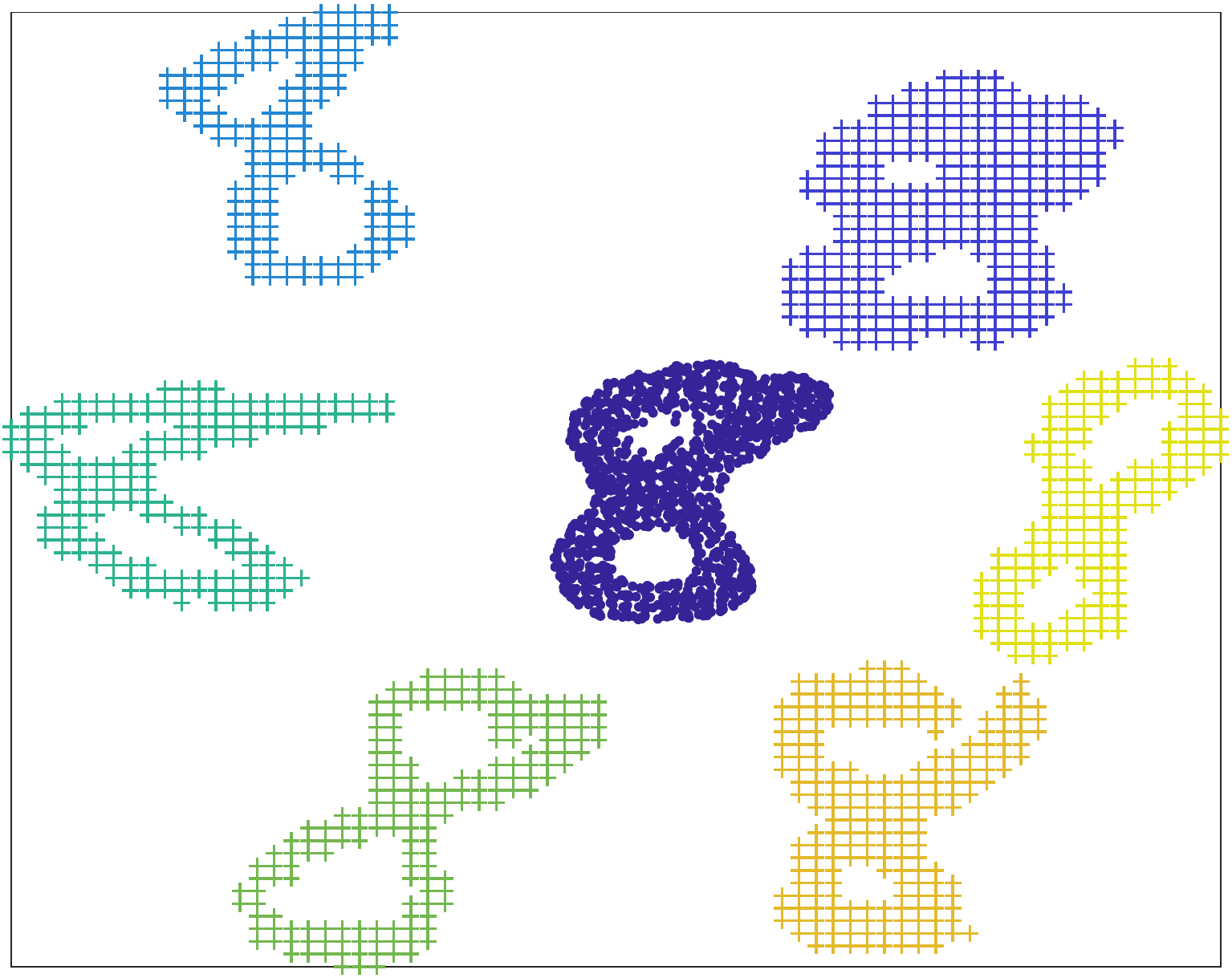} &
\includegraphics[width=0.1\textwidth]{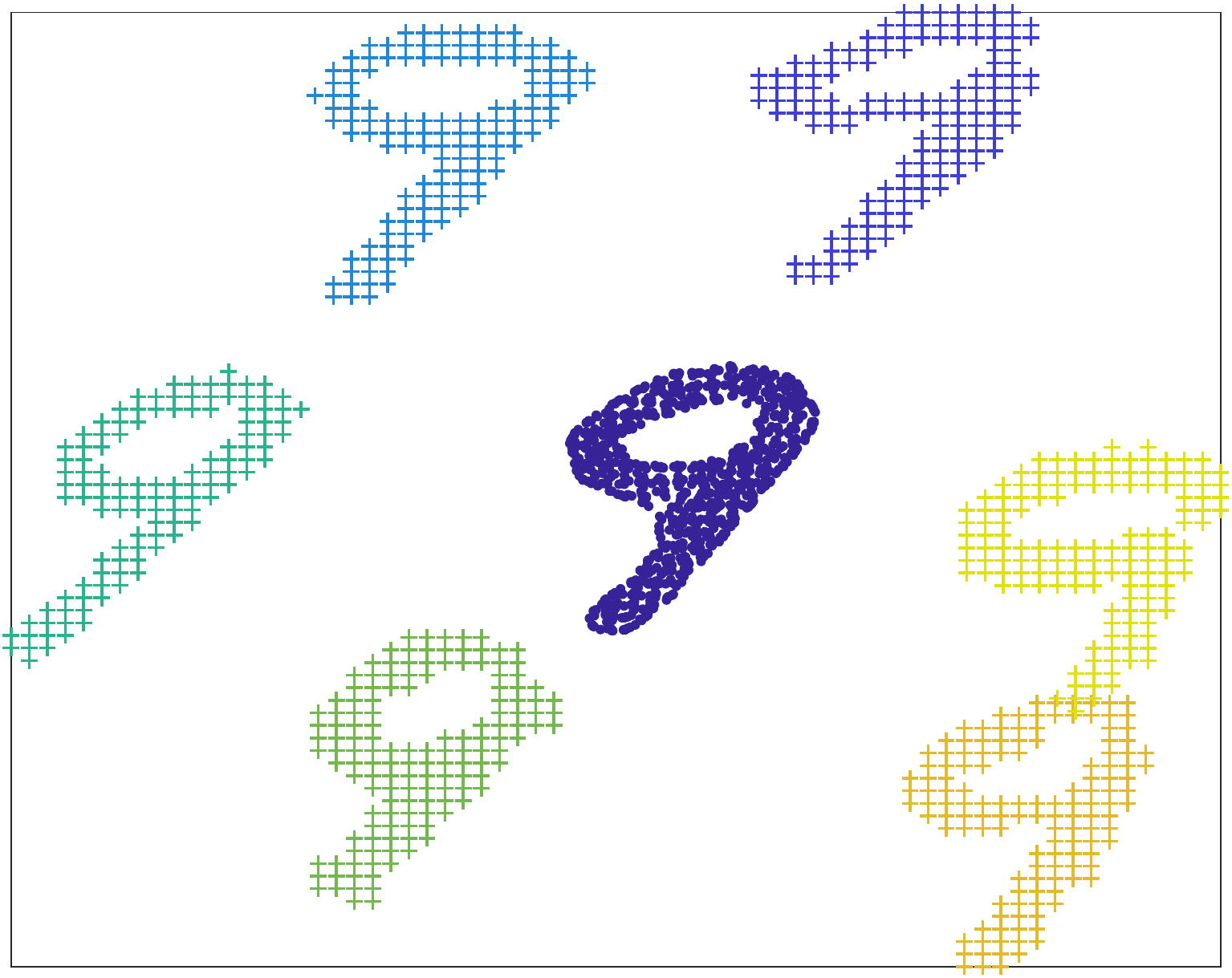}\\\hline
\end{tabular}
\caption{ Barycenters of digits solved non-parametrically,  the first row under a squared distance cost and the second under the distortion-sensitive cost \eqref{eq::distort}.  }
\label{table::MNIST2}
\end{table}

\begin{figure}[h!]
\centering
\subfigure[ Preconditioned ]{
\includegraphics[width=0.3\textwidth]{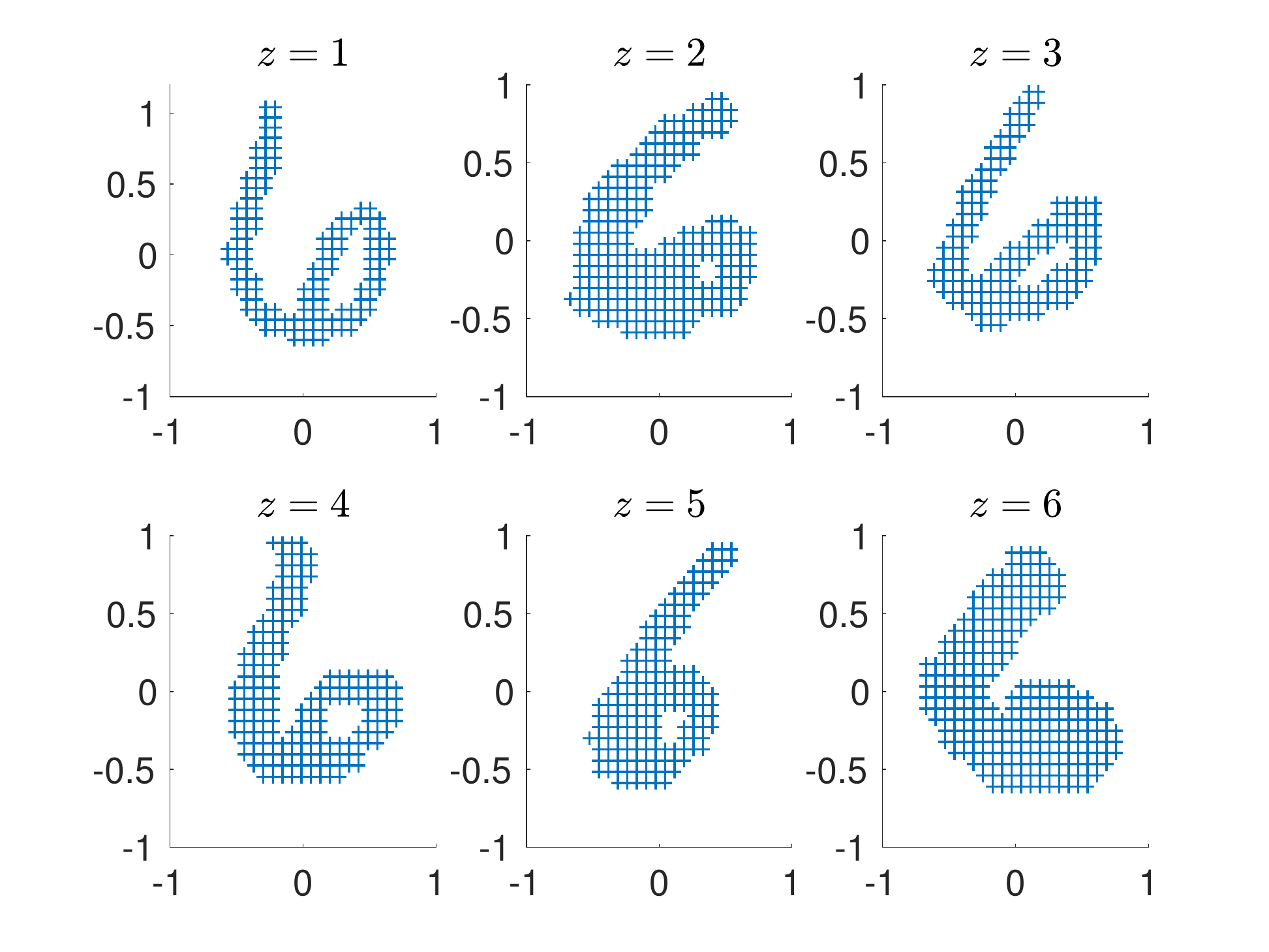}}
\subfigure[ With $L^2$ cost]{
\includegraphics[width=0.3\textwidth]{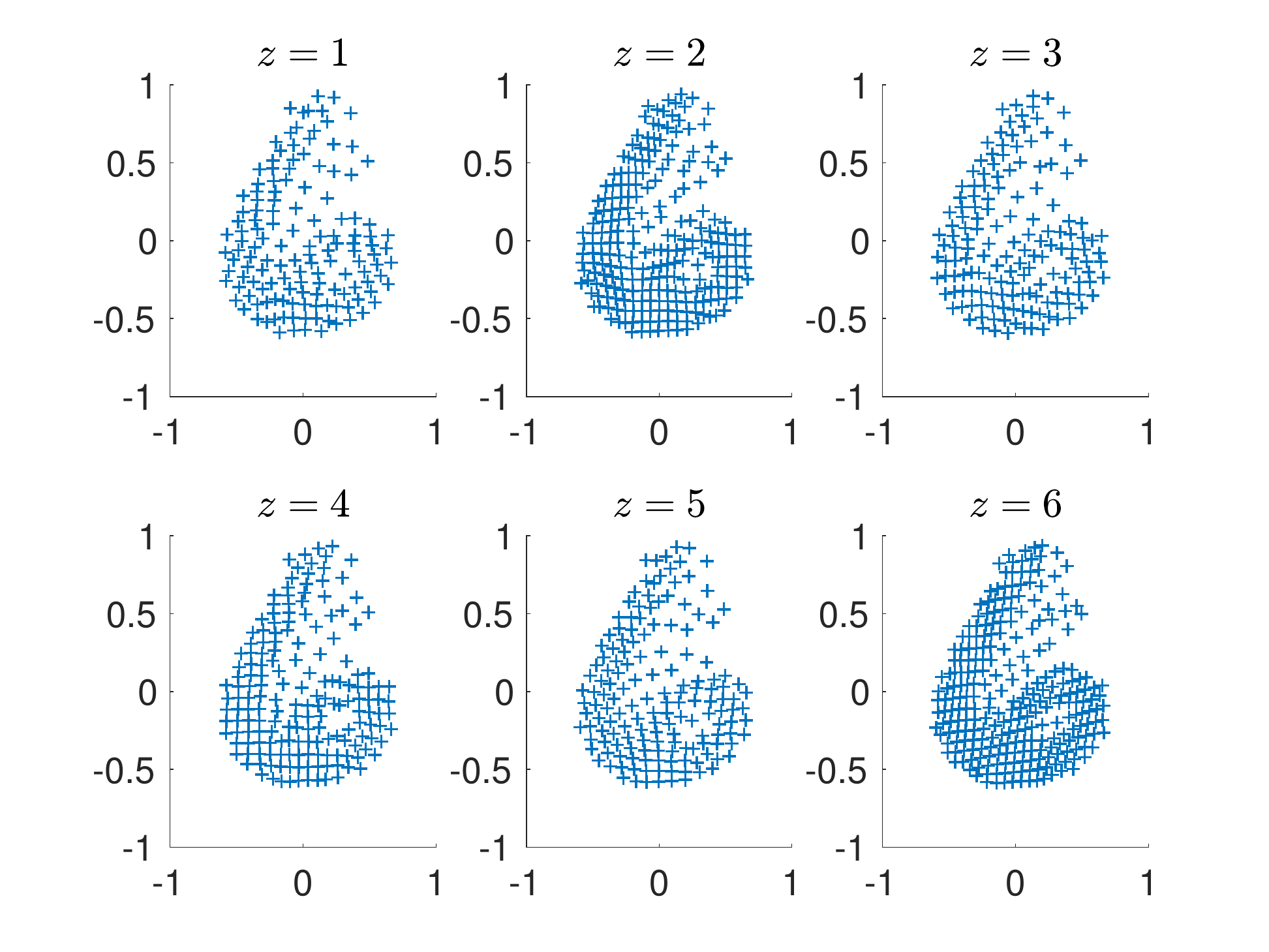}}
\subfigure[With cost in \eqref{eq::distort}]{
\includegraphics[width=0.3\textwidth]{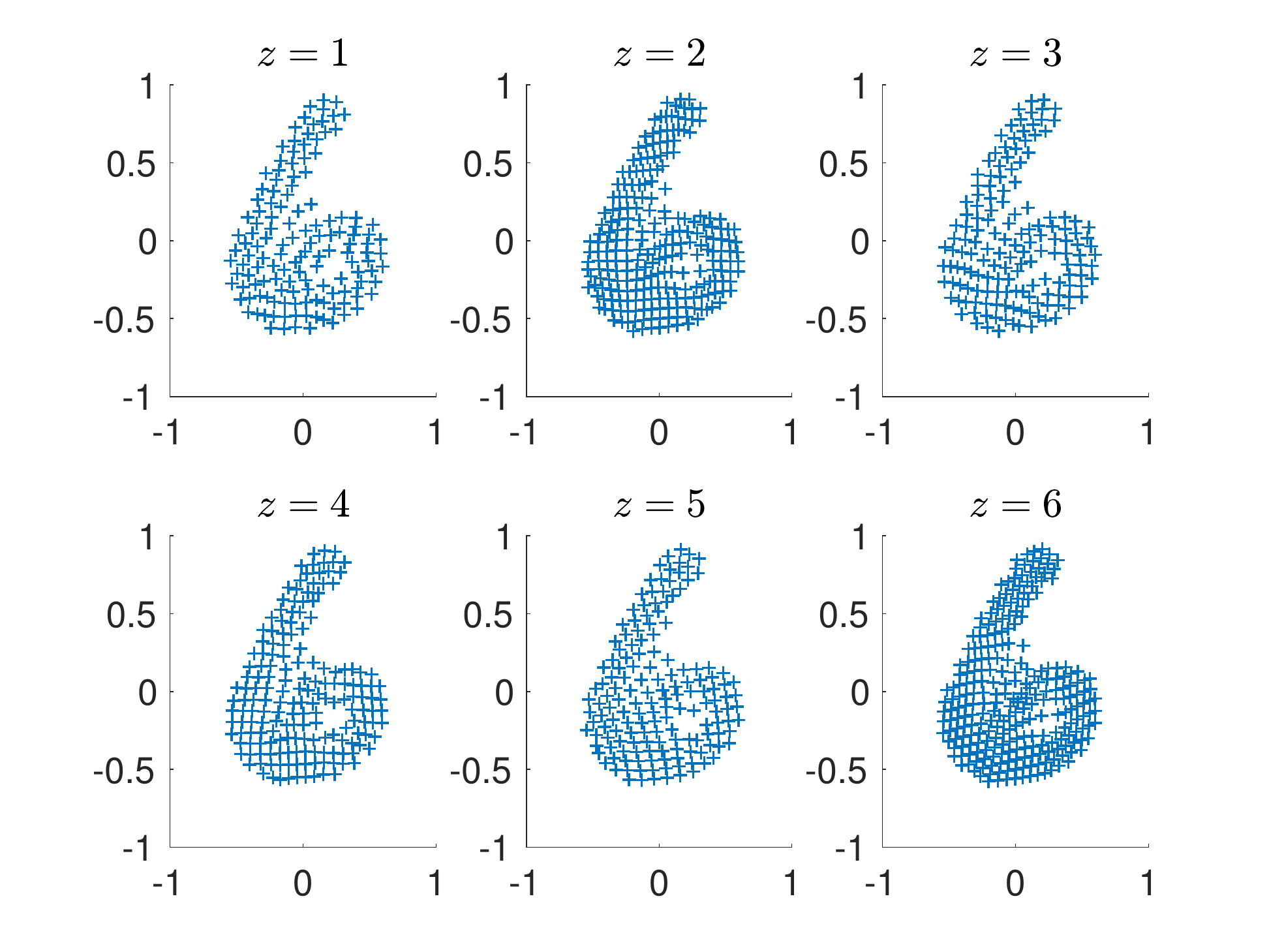}}
\caption{Samples of the digit $6$ push-forwarded to the barycenter for each value of $z$.  }
\label{fig::distort}
\end{figure}

\goodbreak

\subsection{Two patches on the unit sphere}\label{subsec:Patches}
This section performs a numerical experiment on the barycenter of two distributions, with samples shown in Figure \ref{fig:patches}, defined on the unit sphere $\mathbb{S}^2 = \{  x \in \mathbb{R}^3:  ||x||=1 \}$. Because the sample points are defined on the sphere, we can represent them in spherical coordinates,
$$
x_1 = \cos \theta \cos \phi, \quad x_2 = \cos \theta \sin \phi,\quad x_3 = \sin \theta,
$$
where $\theta \in [0,2\pi)$ and $\phi \in [-\pi/2,\pi/2]$ represent longitude and latitude. Then the natural cost is not the canonical Euclidean $L^2$ distance  $c( x, y)=|| x- y||^2$, but the geodesic distance between points:
$$
\tilde c(  x, y ) = 2\arcsin \sqrt{  \sin ^2 \left( \frac{|\theta_x-\theta_y|}{2 } \right) + \cos \theta_x \cos \theta_y \sin ^2 \left( \frac{\phi_x - \phi_y | }{2} \right)     }.
$$

The example illustrates how the barycenters capture essential features of the manifold on which the data are defined. When the two distributions are supported on the same hemisphere (left two panels of Figure \ref{fig:patches}, in red and black), the support of the barycenter (in blue) interpolates between them. By contrast, when the two distributions lie around the north and the south pole respectively, there is no preferred meridian on which the barycenter should lie, resulting in it being supported along the entire equator.

\begin{figure}[!htb]
\hspace{-0.6cm}
      \begin{tabular}{ll}                                              
      \resizebox{66mm}{!}{\includegraphics{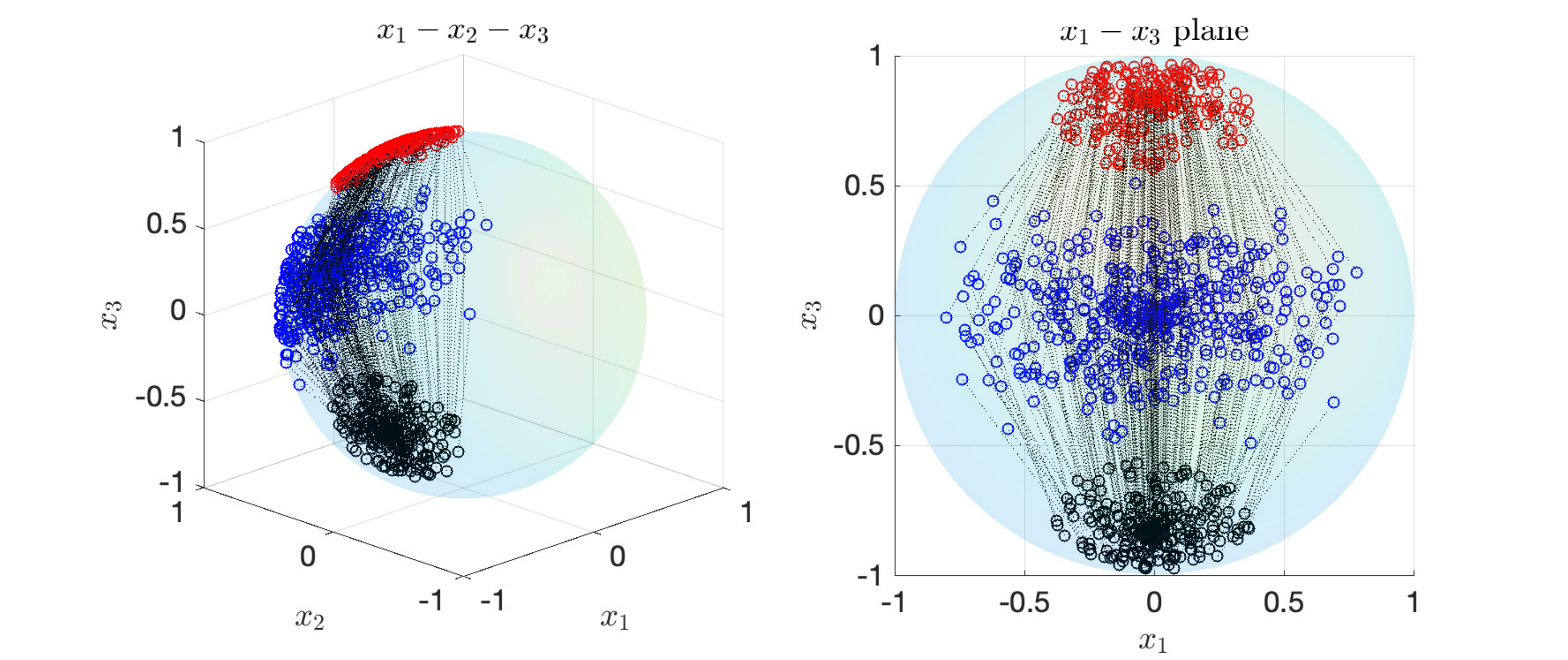}} & \hspace{-0.8cm}                                      
      \resizebox{66mm}{!}{\includegraphics{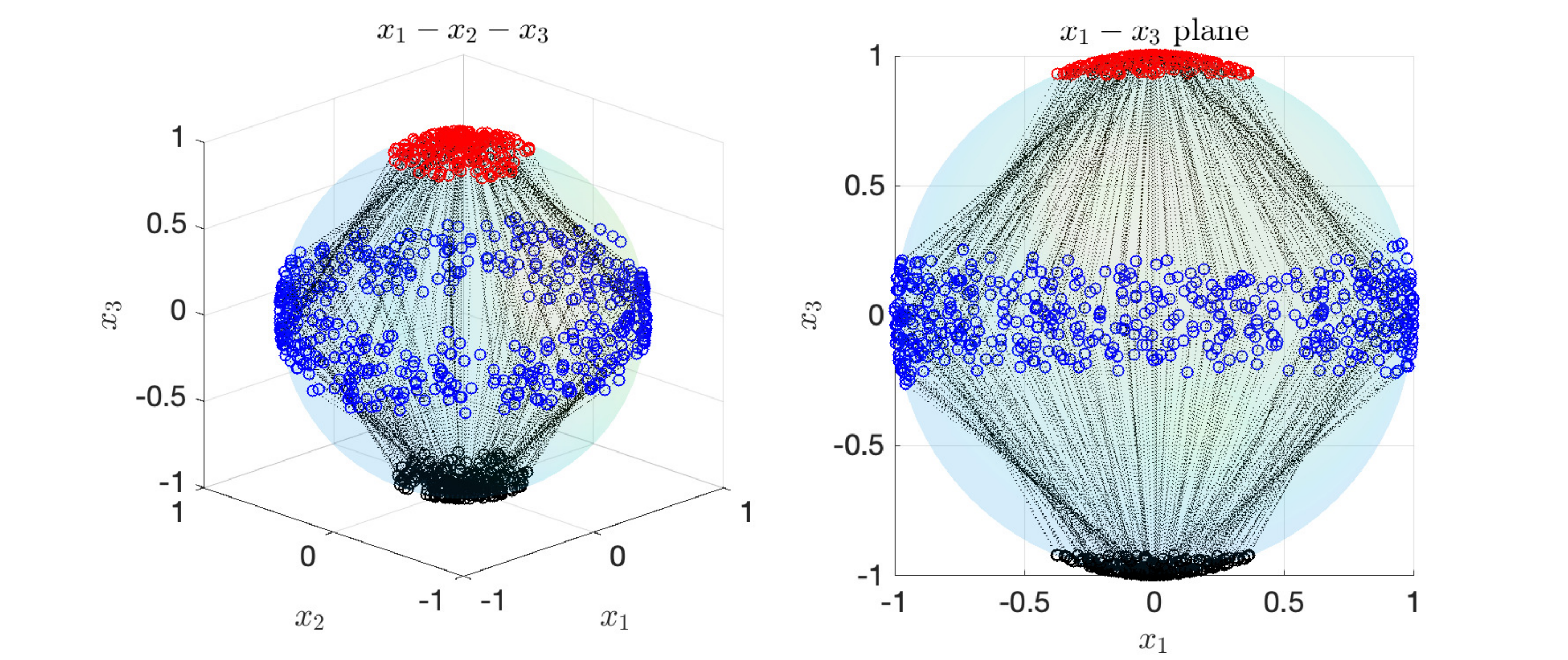}}\\
     \end{tabular}
    \caption{Barycenter (blue) of two distributions with $z=0$ (red) and $z=1$ (black), with 250 points sampled from each. The thin black lines indicate the one-to-one correspondence between the source data and its image under the map. Each distribution has $\theta \sim U_{[0, 2\pi)}$, $\phi \sim U_{[ \frac{3}{8}\pi, \frac{1}{2}\pi ]}$ and $\phi \sim U_{[ -\frac{1}{2}\pi, -\frac{3}{8}\pi ]}$ respectively on the right two panels, while they are shifted to the same hemisphere on the left two.}
    \label{fig:patches}
\end{figure}

\subsection{Hidden variability recovery on the unit sphere}
\label{subsec:HiddenSignal}

Time series are often modeled through a Markov model of the form
\begin{equation}\label{eq:MM}
 x^{n+1} = F(x^n, z_{known}^{n+1}, w^{n+1}, t^{n+1} ),
\end{equation}
where $\{x^n\}_{n=0}^T$ is the time series, $t$ is the time, $z_{known}$ represents known factors that influence $x$ and $w$ contains unknown sources of variability. In \cite{TT2}, the authors proposed a method to uncover the hidden variability $w^n$ by removing from $x^{n+1}$ the variability due to $z^{n+1}$, computing the barycenter of $\rho(x^{n+1}|z^{n+1})$ thorough the family of maps
$$
y^n = T(x^n, z^n),\quad z^n = [  x^{n-1}, t^{n}, z_{known}^{n}  ],
$$
so that the ``filtered'' signal $y^n$ is a function of only $w^n$.
This section shows a synthetic example combining this idea with the algorithm described in section \ref{sec:penalty} to study time series defined on Riemannian manifolds. In particular, we consider the time series defined on the $3$D unit sphere, generated as the sum of a deterministic dynamics and random noise:

\begin{itemize}
\item The deterministic dynamics in spherical coordinates is given by:
\begin{equation*}
\begin{bmatrix}
\tilde \phi^{n+1} \\ \tilde \theta^{n+1}
\end{bmatrix} = \begin{bmatrix}
\phi^n \\
\theta^n + \sin (\theta^n) + \frac{1}{2}
\end{bmatrix},
\end{equation*}
where $\theta^n$ and $\phi^n$ are the longitude and latitude at $x^n$ respectively.
In Cartesian coordinates, this becomes
$\tilde x^{n+1} = Sph2Cart( R=1, \tilde \phi^{n+1}, \tilde \theta^{n+1}  ) )$.

\item The hidden factor $w^{n+1}$ is generated by first sampling a 2-dimensional uniform distribution in spherical coordinates, and then transforming the sampled points into Cartesian coordinates on the unit sphere: 
\begin{equation}\label{eq:hs}
 w^{n+1} = Sph2Cart(1, \phi_w^{n+1} , \theta_w^{n+1} ),
\end{equation}
where  $\phi_w^{n+1} \sim U_{[  \frac{\pi}{2} - 0.45, \frac{\pi}{2} ]}$ and $\theta_w^{n+1} \sim U_{[0,2\pi]}.$
This results in the round patch centered at the north pole shown on the left panel of  Figure \ref{fig:signaldata}. In order to add $w^{n+1}$ to the deterministic part $\tilde x^{n+1}$, we define a one to one map between the tangent planes at the north pole and at $\tilde x^{n+1}$, through the reflection with respect to the axis $\tilde x^{n+1}_{1/2} =  Sph2Cart\left(1, \frac 12 ( \tilde \phi^{n+1} + \frac{\pi}{2} ), \tilde \theta^{n+1}  \right) $ bisecting the angle between the north pole and $\tilde x^{n+1}$. Using Rodrigues' rotation formula, this yields
\begin{equation}\label{eq:xn1}
 x^{n+1} = \left(  I + 2K^2(  \tilde x^{n+1}_{1/2} ) \right) w^{n+1},
\end{equation}
where $K\in \R^{3\times3}$ is the cross-product matrix:
$$
K(x) = \begin{bmatrix}
0 & -x_3 & x_2 \\ x_3 & 0 & -x_1 \\ -x_2 & x_1 & 0
\end{bmatrix}.
$$

\end{itemize}
Figure \ref{fig:signaldata} shows a time series of $1000$ steps, starting from the south pole, and the hidden signal $w$.
\begin{figure}[ht!]
\centering
\subfigure[Hidden signal $w$]{\includegraphics[width=0.49\textwidth]{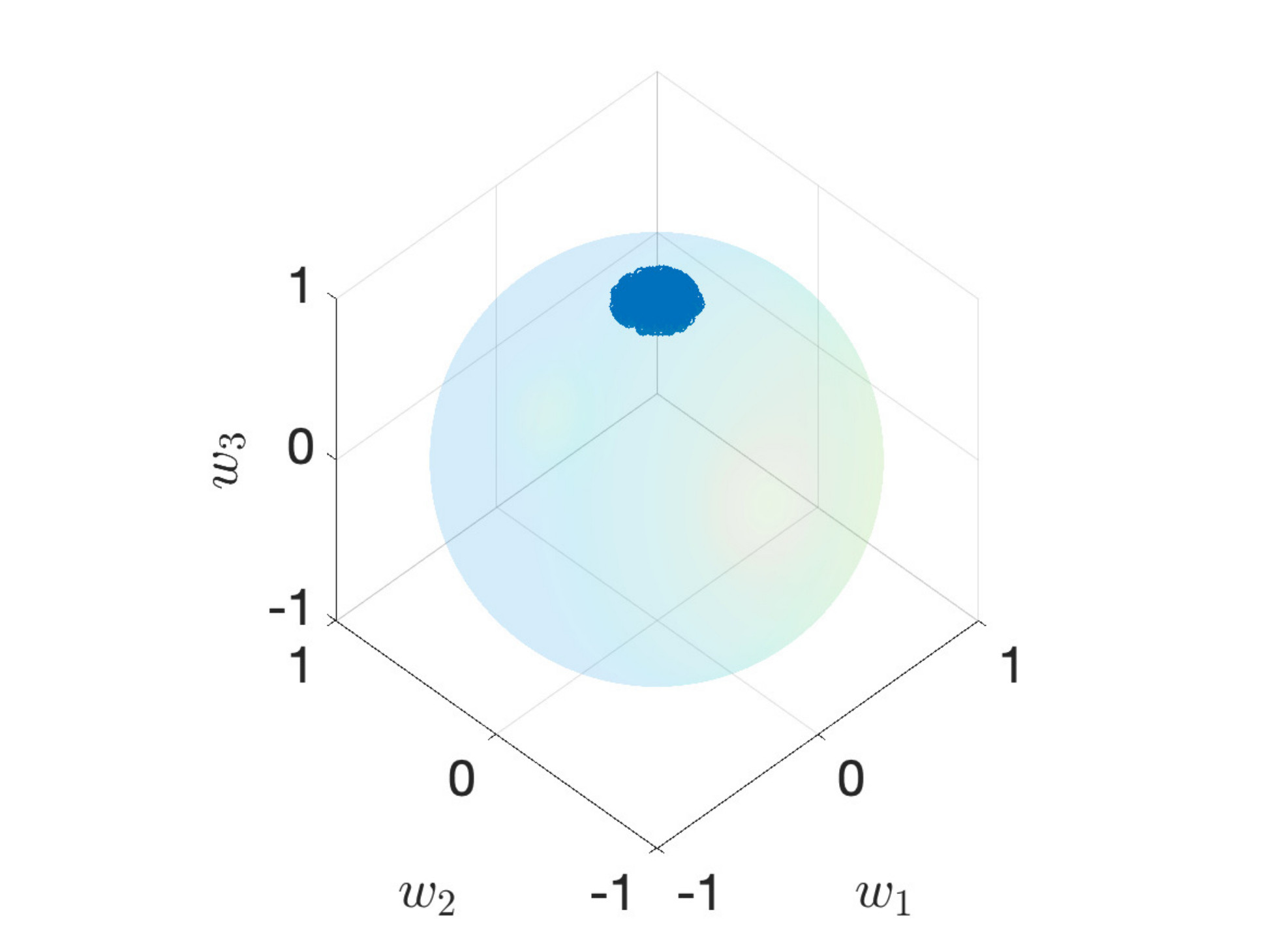}}
\subfigure[Time series $x$]{\includegraphics[width=0.49\textwidth]{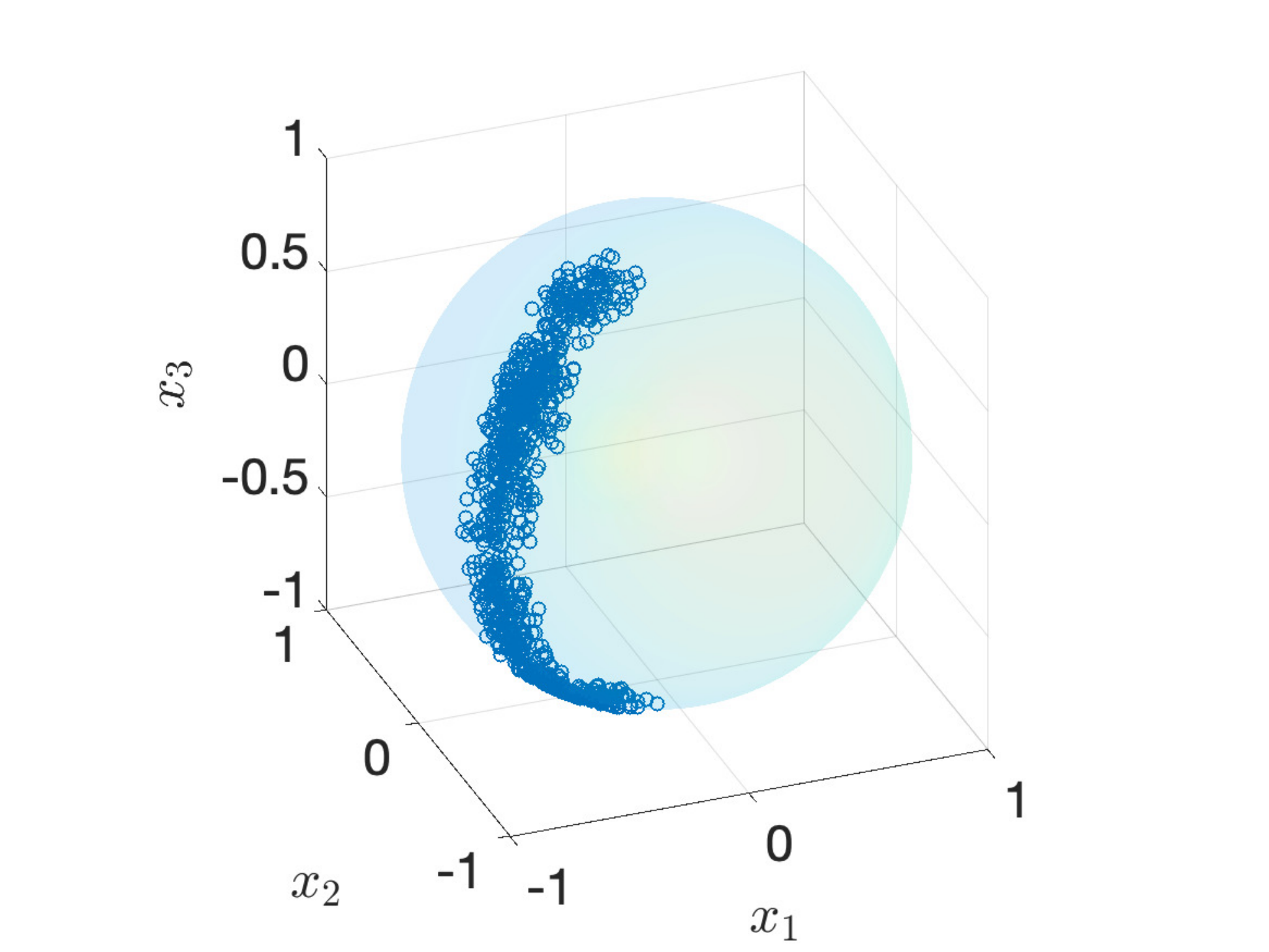}}
\caption{(a) Hidden signal generated using (\ref{eq:hs}). (b) Complete time series generated using (\ref{eq:xn1}).}
\label{fig:signaldata}
\end{figure}
Figure \ref{fig:signalresults} compares the barycenters obtained with two different methods:
\begin{enumerate}
\item Filter $ x^{n+1} $ with $z^{n+1}=x^n$ in $\mathbb{R}^3$, using the Euclidean distance as cost.
\item Filter $[ \phi^{n+1},\theta^{n+1}  ]$ with $z^{n+1} = [\phi^n, \theta^n]$ and great-circle distance as the cost. \end{enumerate}

\begin{figure}
\centering
\includegraphics[width=0.6\textwidth]{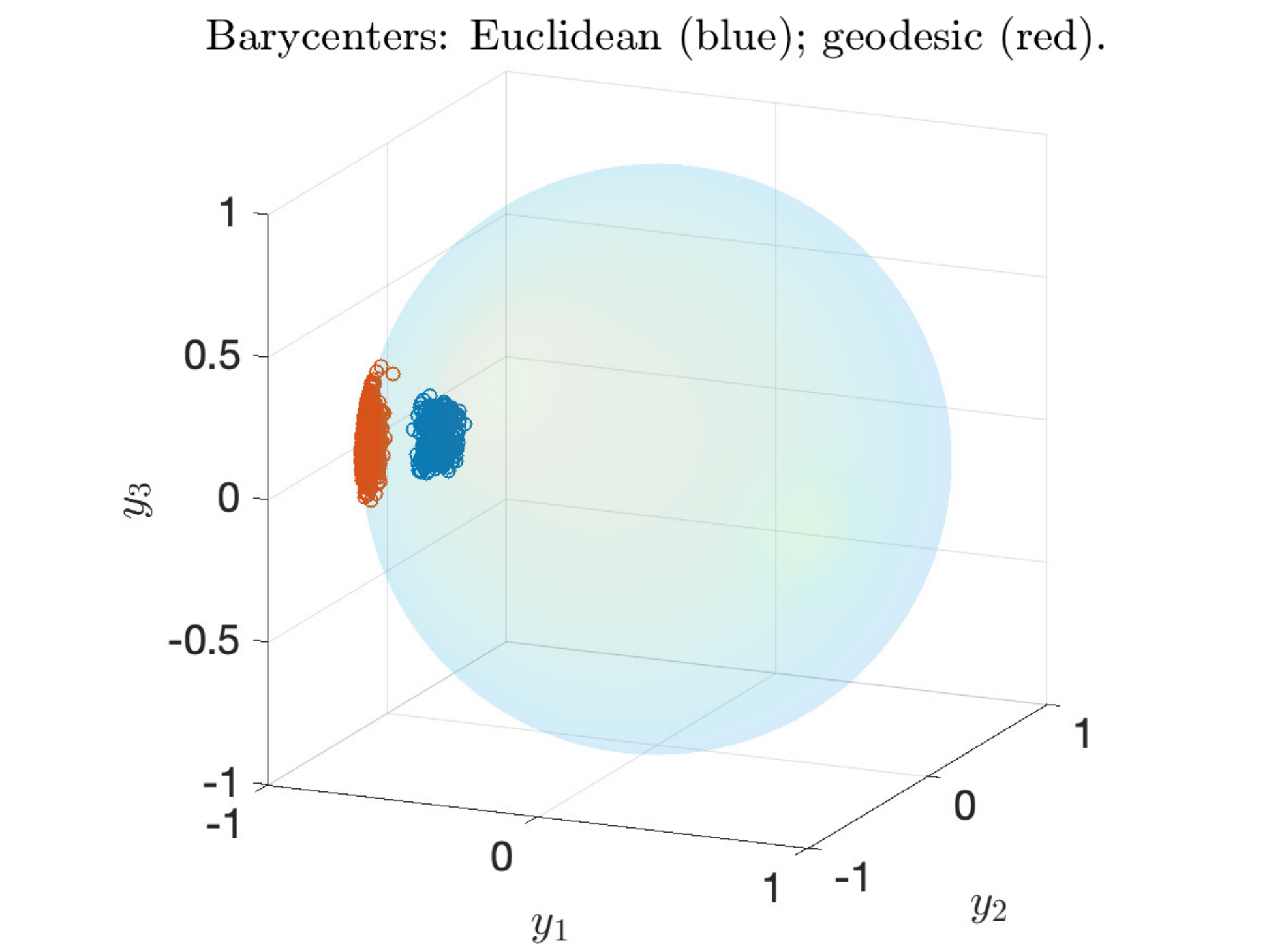} 
\caption{Barycenter solved by two different approaches, in 2D spherical coordinates (red) and 3D Cartesian coordinates (blue)  }
\label{fig:signalresults}
\end{figure}

The first approach ignores the fact that the time series is supported on a lower dimensional manifold of $\R^3$, resulting in a barycenter that is not on the surface of the sphere. The second approach respects the distance metric of the manifold, and the barycenter lays on the same lower dimensional manifold where the marginals are supported. To show that the filtered signal $y^{n}$ is a surrogate for $w^{n}$, it is enough to establish a one to one map between $y^{n}$ and $w^{n}$, a map that depends on the specific form of $F$ in (\ref{eq:MM}). In order to visualize this map, we align the (normalized) barycenter to the hidden noise by means of linear regression.  Figure \ref{fig:signalsurface} shows the resulting smooth dependence between the hidden signal and the barycenter. Figure \ref{fig:MAtime} dsiplayes a moving average of the barycenter and the hidden signal as a functions of time, providing further evidence that the two signal overlap. 

\begin{figure}[h!]
\centering
\includegraphics[width=0.9\textwidth]{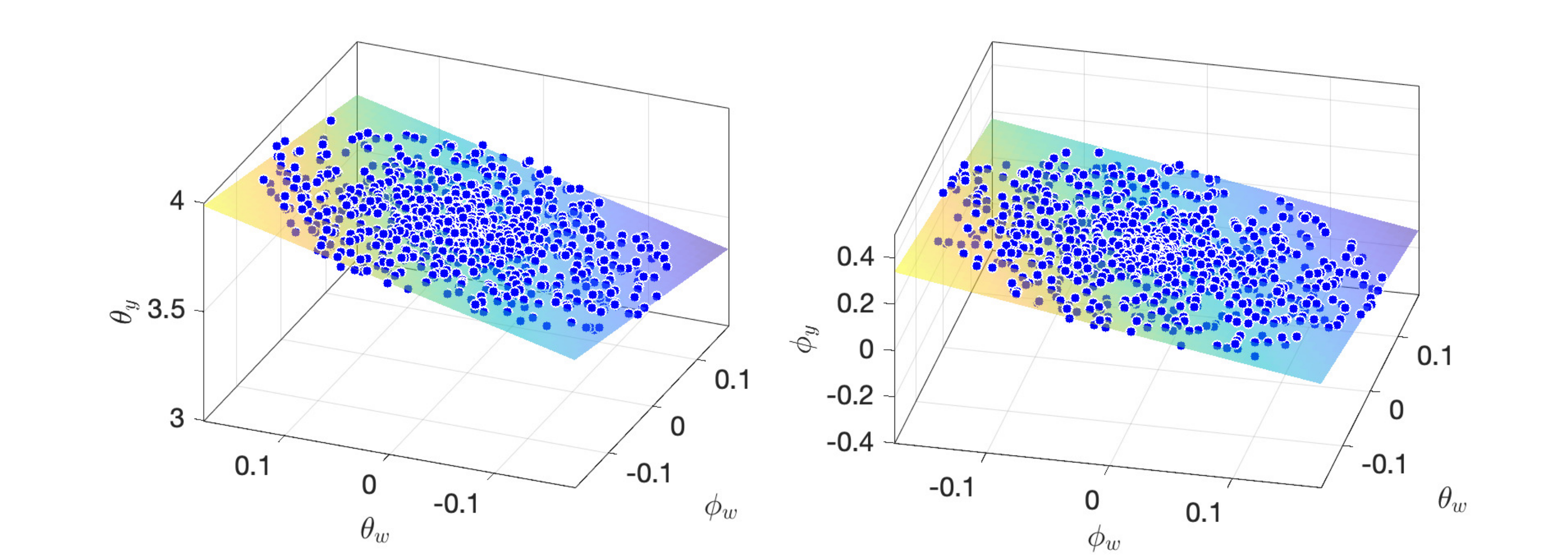}
\includegraphics[width=0.9\textwidth]{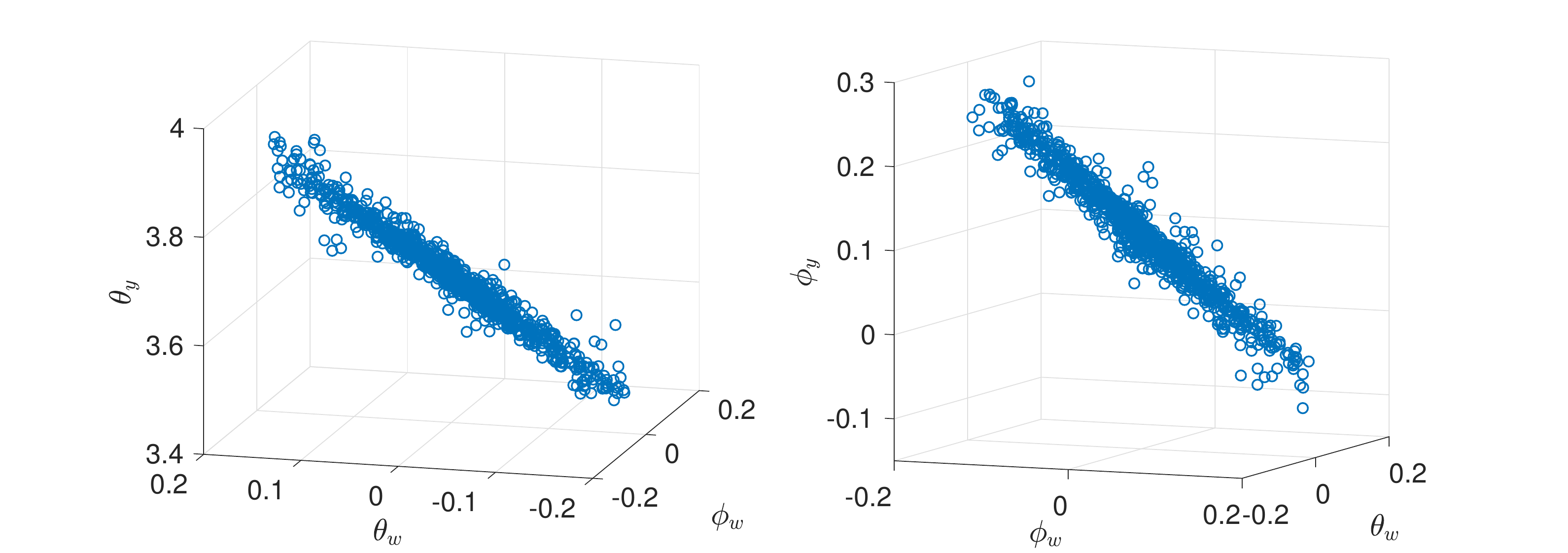}
\caption{The spherical coordinates of the barycenter solved in $2$D as functions of the hidden signal (also in spherical coordinates). Polynomial surfaces of order $5$ are fitted to the data and visualized.}
\label{fig:signalsurface}
\end{figure}

\begin{figure}[!htb]
\hspace{-0.9cm}
      \begin{tabular}{ll}                                           
      \resizebox{69mm}{!}{\includegraphics{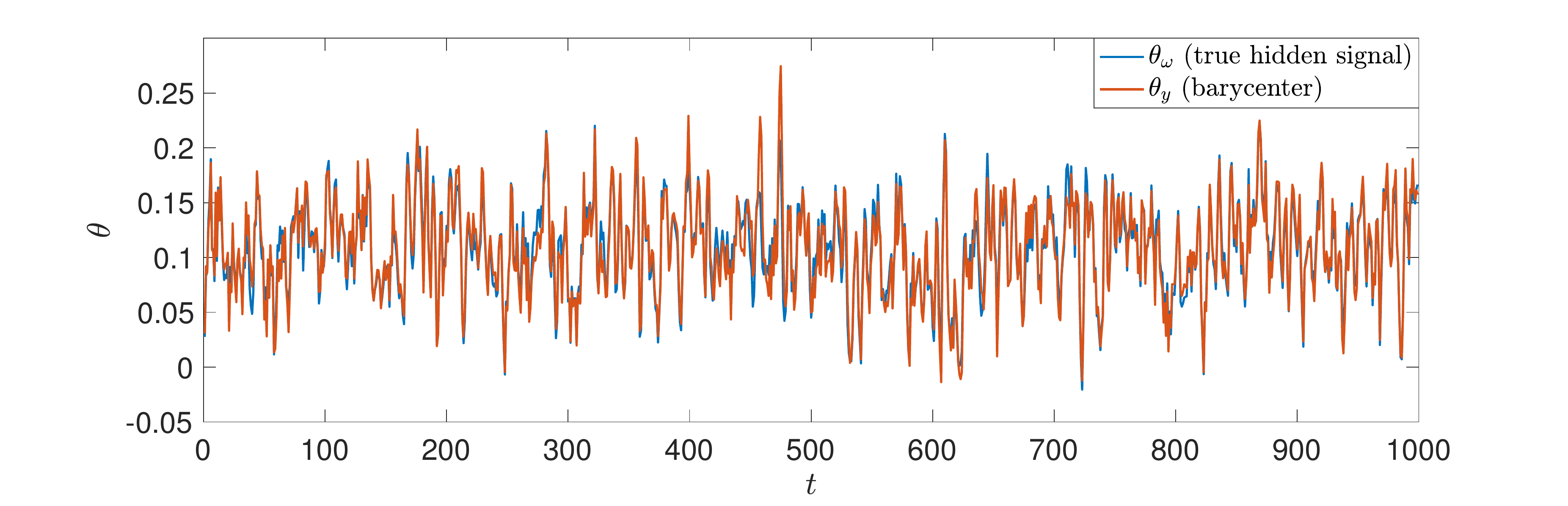}} &       \hspace{-1.1cm}                                
      \resizebox{69mm}{!}{\includegraphics{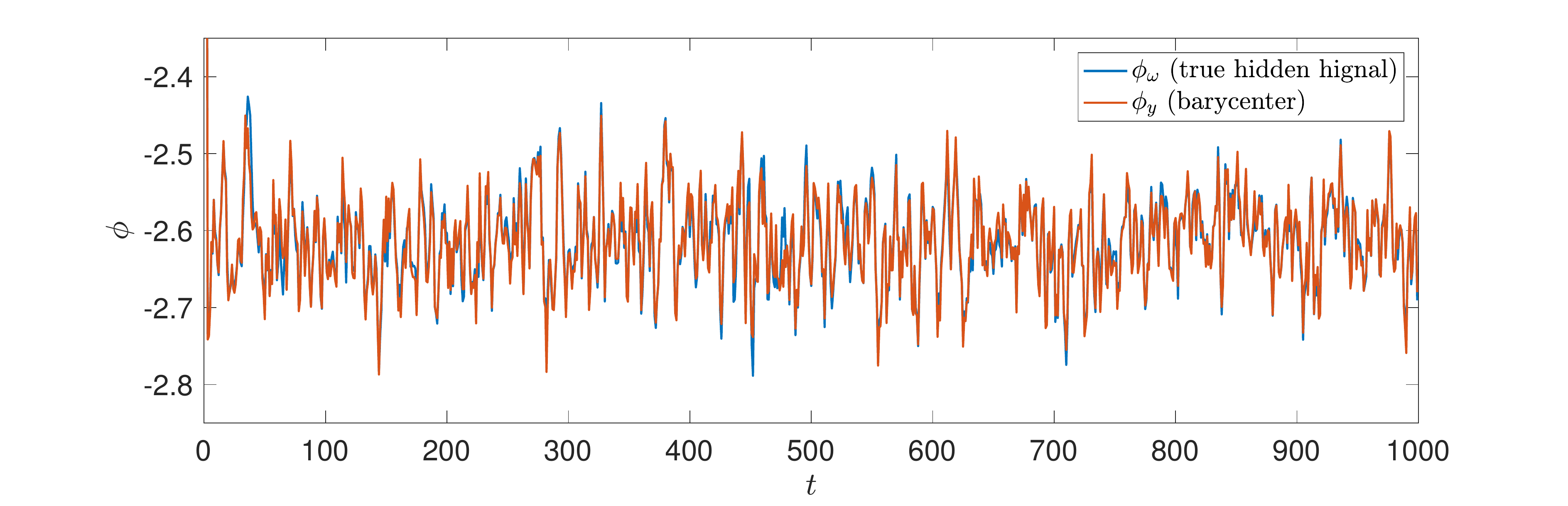}}\\
      \resizebox{69mm}{!}{\includegraphics{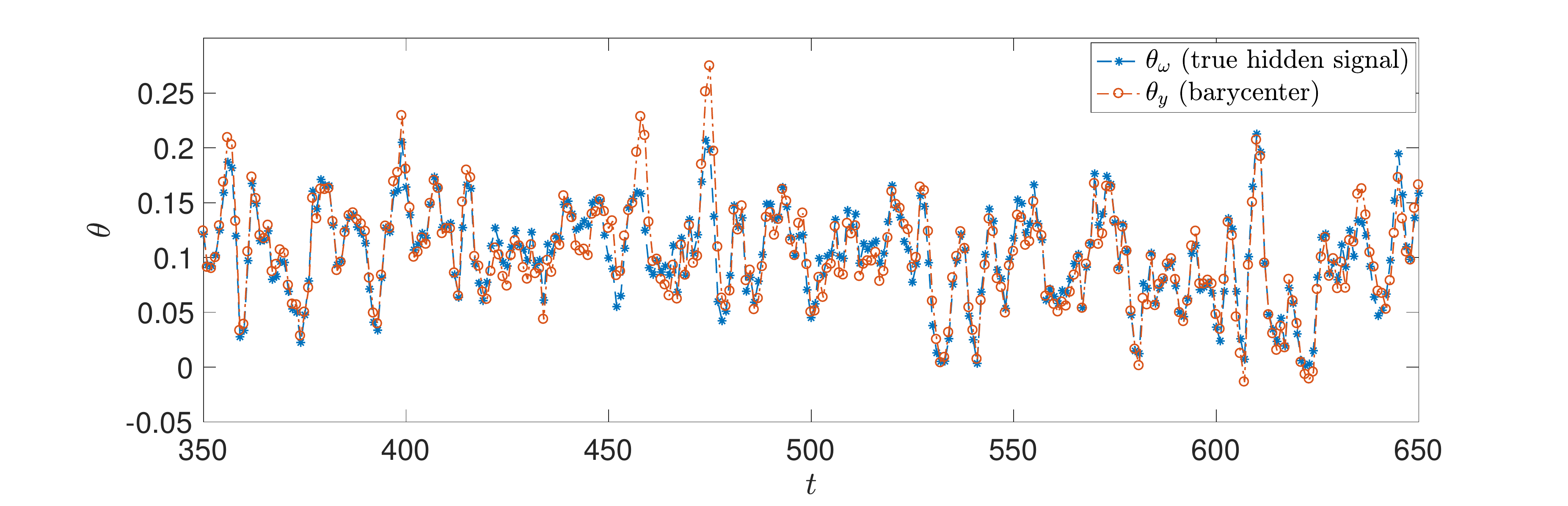}} &       \hspace{-1.1cm}                                
      \resizebox{69mm}{!}{\includegraphics{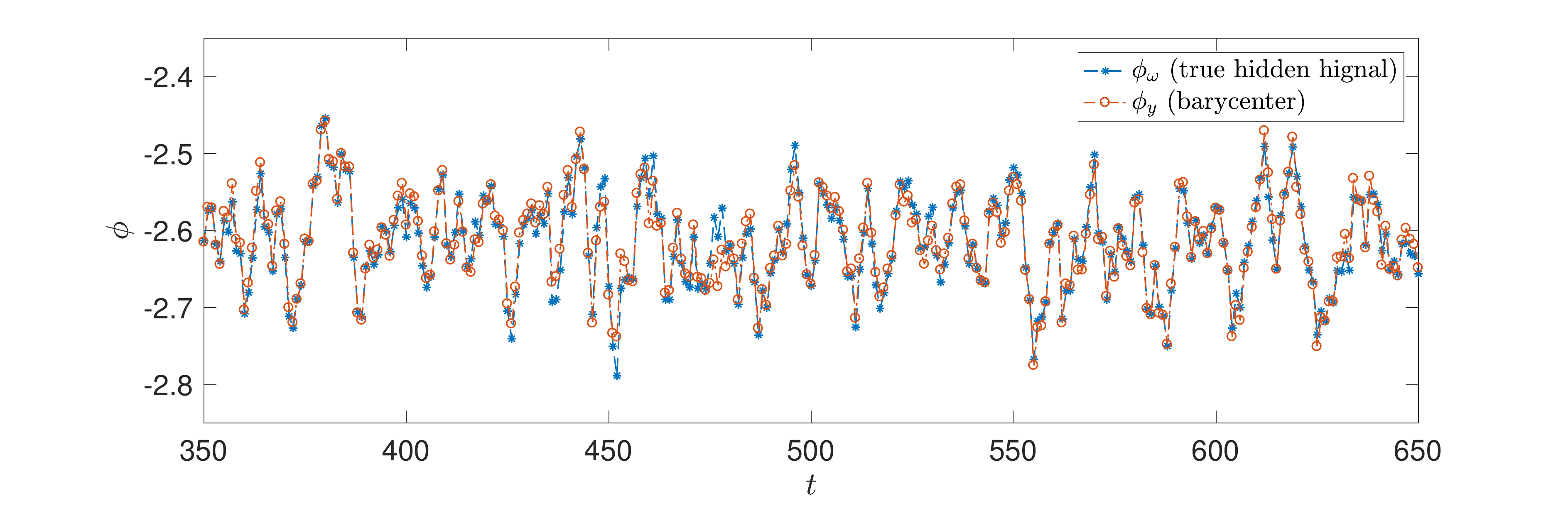}}\\
     \end{tabular}
    \caption{First row: moving average with window size $3$ of the time series of the hidden signal $\omega$  and of the filtered signal $y$ in spherical coordinates (left: longitude $\theta$, right: lattitude $\phi$). Second row: zoom of the first row.}
    \label{fig:MAtime}
\end{figure}

\section{Conclusions}\label{conclusions}
This work introduces the distributional barycenter problem, an extension of the optimal transport barycenter problem where the cost needs not be the expected value of a pairwise function, allowing more general costs needed in applications, such as a new cost penalizing non-isometric maps.

A novel numerical algorithm is introduced for the solution of the barycenter problem. The algorithm avoids the difficulties typical of adversarial approaches by slaving the discriminator to the generator. This results in a simpler approach that looks for a minimum rather than a saddle point of the objective function. The approach is essentially non-parametric, as the only parameter of the test functions and maps is the bandwidth of a kernel function.

\appendix

\section{Updating rules (\ref{eq:GD}) and (\ref{eq:firstOrd})}
\label{sec::gradHess}

This appendix calculates the gradient and Hessian of the objective functions $L$ in \eqref{eq:DDprob1} and \eqref{eq:DDprob2}, used to implement the explicit (\ref{eq:firstOrd}) and implicit (\ref{eq:SecondOrd}) schemes for updating the current position $y_i^n$ of the original sample points $x_{i}$. 

The update $y_{i}^{n}\rightarrow y_{i}^{n+1}$ is subtle, as both of the kernel $\cKa(y, w)$'s  arguments are evaluated at the sample points $y_{i}$, yet they play very different roles in the Lagrangian $L$: while $y= y_i$ represents the map $T(x_i, z_i)$ that $L$ is to be minimized over, the $w = y_i$ are the Kernels' centers, characterizing the test function $F = \rho(y|z)$ over which $L$ was originally to be maximized! Our methodology replaced this maximization by a slaving of $F$ to $T$, hence the appearance of the $y_i$ in $F$, yet $L$ must be minimized only over its first argument, not the second.  Thus, for gradient descent, one must use terms such as
$$  \frac{\partial \mathcal K_a(y, y^{n}_k)}{\partial y}\Big|_{y=y_{i}^{n}} $$
and, for implicit gradient descent,
\begin{multline}
\frac{\partial \mathcal K_a(y, y^{n+1}_k)}{\partial y} \Big|_{y=y_{i}^{n+1}} \approx 
\frac{\partial \mathcal K_a(y, y^{n}_k)}{\partial y} \Big|_{y=y_{i}^{n}} + \\
 \left. \frac{\partial^2 \cKa(y, w)}{\partial y^2} \right|_{\substack{y=y_{i}^{n}\\  w=y_{k}^n}} \left(y_{i}^{n+1} - y_{i}^{n} \right)
+  \frac{\partial^2 \cKa(y, w)}{\partial y \partial w} \Big|_{\substack{y=y_{i}^{n} \\ w=y_{k}^n}} \left(y_{k}^{n+1} - y_{k}^{n} \right).
\end{multline}

Though formulas below are developed for regular pairwise cost functions, their extension to the general case should be clear. The objective functions for problems \ref{prob:CKDE} and \ref{prob:factor} are:
\begin{eqnarray*}
\text{Kernel density estimation: } &&\mathcal{L}_1 =  \sum_i c(x_i,y_i) + \lambda  \sum_{i,k}  \mathcal{K}_a(y_{i},y_{k})C_{ik}.\\
\text{Parametric: }&&\mathcal{L}_2 = \sum_i c(x_i,y_i) + \lambda  \sum_{i,k}  f(y_{i})f(y_{k})C_{ki}.
\end{eqnarray*}

\noindent
\emph{Explicit: }
Formula (\ref{eq:firstOrd}) is equivalent to forward Euler for ODEs. 
We can update the position of each point $y_i\in \R^{d}$ independently, through the update rule $y^{n+1}_{i}=y^{n}_{i}-\eta\nabla_{y}L|_{y=y_{i}^{n}}$, where
$$
\left.\nabla_{y} \mathcal{L}_{1}\right|_{y=y_{i}^{n}} =\left[ \frac{\partial c(x_i,y)}{\partial  y} + \lambda \sum_{k} \frac{\partial \mathcal K_a(y, y^{n}_k)}{\partial y} C_{ik}\right]_{y=y_{i}^{n}},
$$
and 
$$
\left.\nabla_{y} \mathcal{L}_{2}\right|_{y=y_{i}^{n}}  =\left[ \frac{\partial c(x_i,y)}{\partial  y} + \lambda \sum_{k} \frac{\partial f(y_i)}{\partial y} f(y_k^{n})C_{ki}\right]_{y=y_{i}^{n}}.
$$

\noindent
\emph{Implicit:} 
This scheme, when applied to minimize the generic function $f(y,w)$ is obtained by the following approximation:
\begin{multline}
y^{n+1}=y^{n}-\eta f_{y}(y^{n+1},y^{n+1}) \\
\approx y^{n}-\eta\lbrace f_{y}(y^{n},y^{n}) +(y^{n+1}-y^{n})\left[f_{yy}(y^{n},y^{n})+f_{yw}(y^{n},y^{n})\right] \rbrace
\end{multline}
that, once rearranged, results in the scheme in (\ref{eq:SecondOrd}) \cite{essid2019implicit}:
\begin{equation}
 y^{n+1}=y^{n}-\eta\left[I +\eta(f^{n}_{yy}+f^{n}_{yw})\right]^{-1}f_{y}^{n}
\end{equation}
The Hessian matrix $\nabla_{yy}L_1$ in (\ref{eq:SecondOrd})  is therefore given by 
$$
\nabla_{yy}\mathcal{L}_1 = \mathcal{L}_1^{yy} + \mathcal{L}_1^{yw}.
$$
The matrix $\mathcal{L}_{1,yy}$ is diagonal, and we have:
$$
\mathcal{L}_{1,ii}^{yy} 
=\left[ \frac{\partial^2 c(x_i,y)}{\partial  y^2} + \lambda \sum_{k} \frac{\partial^2 \cKa(y, y^{n}_{k})}{\partial y^2} C_{ik} \right]_{y=y_{i}^{n}} ,
 \quad \mathcal{L}_{1,ik}^{yw} 
= \lambda \left.\frac{\partial^2 \cKa(y,w)}{\partial y \partial w}\right|_{\substack{y=y^{n}_{i} \\ w=y^{n}_{k}}} C_{ik}.
$$
This calculation applies to pairwise cost functions, where the only non diagonal $\R^{d \times d}$ blocks arise from the $L_F$ in ${\mathcal L_1}$. One needs to adjust accordingly for more general costs.

Similarly for $\mathcal{L}_{2}$ we have $\nabla_{yy}\mathcal{L}_{2}= \mathcal{L}_2^{yy} + \mathcal{L}_2^{yw}$ with
$$
\mathcal{L}_{2,ii}^{yy} 
= \left[ \frac{\partial^2 c(x_i,y)}{\partial  y^2} + \lambda \sum_{k} \frac{\partial^2 f(y) }{\partial y^2}f(y^{n}) C_{ki} \right]_{y=y_{i}^{n}},
\quad
\mathcal{L}_{2,ik}^{yw} 
= \lambda \left.\frac{\partial f(y)}{\partial y}\right|_{y=y^{n}_{i}} \left.\frac{\partial f(y)}{\partial y}\right|_{y=y^{n}_{k}} C_{ki}.
$$
When $f$ is a vector, the gradient and Hessian have an additional sum over its components.

\section*{Acknowledgments}
Tabak's work was partially supported by NSF grant DMS-1715753 and ONR grant N00014-15-1-2355.

\bibliographystyle{plain}
\bibliography{TabakTrigilaZhao}

\end{document}